\documentclass[12pt]{amsart}

\usepackage{amsfonts,amsmath,amsthm,amssymb,latexsym,amsthm,newlfont,enumerate,color,tikz-cd}

\newif\ifslide

\theoremstyle{plain}
\newtheorem{theorem}{Theorem}[section]

\newtheorem{corollary}[theorem]{Corollary}
\newtheorem{lemma}[theorem]{Lemma}
\newtheorem{claim}[theorem]{Claim}

\newtheorem{proposition}[theorem]{Proposition}
\newtheorem{definition-lemma}[theorem]{Definition-Lemma}

\newtheorem{red-question}[theorem]{\textcolor{red}{Question}}

\theoremstyle{definition}
\newtheorem{definition}[theorem]{Definition}
\newtheorem{remark}[theorem]{Remark}

\def\ideal#1.{I_{#1}}
\def\ring#1.{\mathcal {O}_{#1}}

\def\fring#1.{\hat{\mathcal {O}}_{#1}}
\def\proj#1.{\mathbb {P}(#1)}
\def\pr #1.{\mathbb {P}^{#1}}
\def\dpr #1.{\hat{\mathbb {P}}^{#1}}
\def\af #1.{\mathbb A^{#1}}
\def\Hz #1.{\mathbb F_{#1}}
\def\Hbz #1.{\overline{\mathbb F}_{#1}}
\def\fb#1.{\underset #1 {\times}}
\def\rest#1.{\underset {\ \ring #1.} \to \otimes}
\def\au#1.{\operatorname {Aut}\,(#1)}
\def\deg#1.{\operatorname {deg } (#1)}

\def\pic#1.{\operatorname {Pic}\,(#1)}
\def\pico#1.{\operatorname{Pic}^0(#1)}
\def\picg#1.{\operatorname {Pic}^G(#1)}
\def\ner#1.{NS (#1)}
\def\rdown#1.{\llcorner#1\lrcorner}
\def\rfdown#1.{\lfloor{#1}\rfloor}
\def\rup#1.{\ulcorner{#1}\urcorner}
\def\rcup#1.{\lceil{#1}\rceil}

\def\n1#1.{\operatorname {N_1}(#1)}  %Vector space of 1-cycles
\def\cn1#1.{\overline{\operatorname {N^1}(#1)}} %closure
\def\cone#1.{\operatorname {NE}(#1)}     %cone of curves
\def\ccone#1.{\overline{\operatorname {NE}}(#1)}
\def\none#1.{\operatorname {NF}(#1)}
\def\cnone#1.{\overline{\operatorname {NF}}(#1)}
\def\mone#1.{\operatorname {NM}(#1)} %cone of moving curves
\def\cmone#1.{\overline{\operatorname {NM}}(#1)}

\def\coef#1.{\frac{(#1-1)}{#1}}
\def\vit#1.{D_{\langle #1 \rangle}}
\def\mm#1.{\overline {M}_{0,#1}}
\def\H1#1.{H^1(#1,{\ring #1.})}
\def\ac#1.{\overline {\mathbb F}_{#1}}

\def\adj#1.{\frac {#1-1}{#1}}
\def\spn#1.{\overline{#1}}
\def\pek#1.#2.{\Cal P^{#1}(#2)}
\def\plk#1.#2.{\Cal P^{\leq #1}(#2)}
\def\ev#1.{\operatorname{ev_{#1}}}
\def\ilist#1.{{#1}_1,{#1}_2,\dots}
\def\bminv#1.{(\nu_1,s_1;\nu_2,s_2;\dots ;\nu_{#1},s_{#1};\nu_{r+1})}
\def\zinv#1.{(\nu_1,s_1;\nu_2,s_2;\dots ;\nu_{#1},s_{#1};0)}
\def\iinv#1.{(\nu_1,s_1;\nu_2,s_2;\dots ;\nu_{#1},s_{#1};\infty)}

\def\scr #1.{\mathcal #1}

\def\llist#1.#2.{{#1}_1,{#1}_2,\dots,{#1}_{#2}}
\def\ulist#1.#2.{{#1}^1,{#1}^2,\dots,{#1}^{#2}}
\def\lomitlist#1.#2.{{#1}_1,{#1}_2,\dots,\hat {{#1}_i}, \dots, {#1}_{#2}}
\def\lomitlistz#1.#2.{{#1}_0,{#1}_1,\dots,\hat {{#1}_i}, \dots, {#1}_{#2}}
\def\loc#1.#2.{\Cal O_{#1,#2}}
\def\fderiv#1.#2.{\frac {\partial #1}{\partial #2}}
\def\deriv#1.#2.{\frac {d #1}{d #2}}

\def\map#1.#2.{#1 \longrightarrow #2}
\def\rmap#1.#2.{#1 \dasharrow #2}
\def\emb#1.#2.{#1 \hookrightarrow #2}
\def\non#1.#2.{\text {Spec }#1[\epsilon]/(\epsilon)^{#2}}
\def\Hi#1.#2.{\text {Hilb}^{#1}(#2)}
\def\sym#1.#2.{\operatorname {Sym}^{#1}(#2)}
\def\Hb#1.#2.{\text {Hilb}_{#1}(#2)}
\def\Hm#1.#2.{\Hom_{#1}(#2)}
\def\prd#1.#2.{{#1}_1\cdot {#1}_2\cdots {#1}_{#2}}
\def\Bl #1.#2.{\operatorname {Bl}_{#1}#2}
\def\pl #1.#2.{#1^{\otimes #2}}
\def\mgn#1.#2.{\overline {M}_{#1,#2}}
\def\ialist#1.#2.{{#1}_1 #2 {#1}_2, #2\dots}
\def\pair#1.#2.{\langle #1, #2\rangle}
\def\vandermonde#1.#2.{\left|
\begin{matrix}
1 & 1 & 1 & \dots & 1\\
{#1}_1 & {#1}_2 & {#1}_3 & \dots & {#1}_{#2}\\
{#1}_1^2 & {#1}_2^2 & {#1}_3^2 & \dots & {#1}_{#2}^2\\
\vdots & \vdots & \vdots & \ddots & \vdots\\
{#1}_1^{#2-1} & {#1}_2^{#2-1} & {#1}_2^{#2-1} & \dots & {#1}_{#2}^{#2-1}\\
\end{matrix}
\right|
}
\def\vandermondet#1.#2.{\left|
\begin{matrix}
1 & {#1}_1   & {#1}_1^2 & \dots & {#1}_1^{#2-1}\\
1 & {#1}_2   & {#1}_2^2 & \dots & {#1}_2^{#2-1}\\
1 & {#1}_3   & {#1}_3^2 & \dots & {#1}_3^{#2-1}\\
\vdots & \vdots & \vdots & \ddots & \vdots\\
1 & {#1}_{#2}& {#1}_{#2}^2 & \dots & {#1}_{#2}^{#2-1}\\
\end{matrix}
\right|
}
\def\gr#1.#2.{\mathbb{G}(#1,#2)}

\def\alist#1.#2.#3.{{#1}_1 #2 {#1}_2 #2\dots #2 {#1}_{#3}}
\def\zlist#1.#2.#3.{#1_0 #2 #1_1 #2\dots #2 #1_{#3}}
\def\lomitlist30#1.#2.#3.{{#1}_0,{#1}_1 #2 \dots #2\hat {{#1}_i} #2\dots #2 {#1}_{#3}}
\def\lmap#1.#2.#3.{#1 \overset{#2}{\longrightarrow} #3}
\def\mes#1.#2.#3.{#1 \longrightarrow #2 \longrightarrow #3}
\def\ses#1.#2.#3.{0\longrightarrow #1 \longrightarrow #2 \longrightarrow #3 \longrightarrow 0}
\def\les#1.#2.#3.{0\longrightarrow #1 \longrightarrow #2 \longrightarrow #3}
\def\res#1.#2.#3.{#1 \longrightarrow #2 \longrightarrow #3\longrightarrow 0}
\def\Hi#1.#2.#3.{\text {Hilb}^{#1}_{#2}(#3)}
\def\ten#1.#2.#3.{#1\underset {#2}{\otimes} #3}
\def\lomitlist30#1.#2.#3.{{#1}_0 #2 {#1}_1 #2 \dots #2 \hat {{#1}_i} #2 \dots #2 {#1}_{#3}}
\def\mderiv#1.#2.#3.{\frac {d^{#3} #1}{d #2^{#3}}}

\def\Hom{\operatorname{Hom}}

\def\Supp{\operatorname{Supp}}

\def\Exc{\operatorname{Exc}}

\def\dim{\operatorname{dim}}

\def\deg{\operatorname{deg}}

\def\det{\operatorname{det}}

\def\Sing{\operatorname{Sing}}
\def\noninv{\operatorname{non-inv}}

\def\rk{\operatorname{rk}}

\def\rest{\operatorname{res}}

\def\e{\Cal E}

\def\e1{E_1}
\def\e2{E_2}

\usepackage{hyperref,amscd,mathtools}

\title{MMP for smooth co-rank one foliations}

\subjclass[2010]{14E30, 37F75}

\author{Paolo Cascini}
\address{Department of Mathematics\\
Imperial College London\\
180 Queen's Gate\\
London SW7 2AZ, UK}
\email{p.cascini@imperial.ac.uk}

\author{Calum Spicer}
\address{Department of Mathematics, King's College London, Strand,
London WC2R 2LS, UK}
\email{calum.spicer@kcl.ac.uk}

\begin{document}

\begin{abstract}
We prove the existence of minimal models for smooth co-rank one foliations of general type.
\end{abstract}
\maketitle
\tableofcontents

\section{Introduction}

The goal of this paper is to show that we may run the Minimal Model Program (MMP) starting with a smooth co-rank one foliation 
on a smooth projective variety, and that under mild hypotheses the MMP will terminate in either a minimal model
or a Mori fibre space.  This was proven in \cite{Spicer20} for smooth foliations on threefolds and our main result extends this to arbitrary dimension.

\begin{theorem}[= Corollary \ref{cor_main}]
\label{intro_thm}
Let $X$ be a smooth projective variety and let $\mathcal F$ be a smooth co-rank one foliation on $X$.
Suppose that $K_{\mathcal F}$ is big or that $K_{\mathcal F}$ is not pseudo-effective.

Then we may run a $K_{\mathcal F}$-MMP $\phi\colon X\dashrightarrow X'$ such that, writing $\mathcal F' := \phi_*\mathcal F$, the following hold: 
\begin{enumerate}
\item if $K_{\mathcal F}$ is big, then $\mathcal F'$ is a minimal model, i.e. $K_{\mathcal F'}$ is nef; or
\item if $K_{\mathcal F}$ is not pseudo-effective, then $\mathcal F'$ admits a Mori fibre space, i.e. a contraction  
$\eta\colon X'\to Z$ onto a normal projective variety $Z$ such that $\dim Z<\dim X$, 
$\rho(X'/Z)=1$ and $-K_{\mathcal F'}$ is $\eta$-ample.  
\end{enumerate}
\end{theorem}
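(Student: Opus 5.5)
The plan is to reduce the statement to the general MMP machinery for co-rank one foliations with mild singularities, and to check that smoothness is preserved in a weak enough sense throughout. First, since $\mathcal F$ is smooth of co-rank one on a smooth $X$, it is in particular F-dlt (indeed canonical, with non-dicritical singularities), and $X$ is klt. I would therefore invoke the cone and contraction theorem for F-dlt co-rank one foliations: every $K_{\mathcal F}$-negative extremal ray $R$ is spanned by a rational curve tangent to $\mathcal F$, and $R$ can be contracted. The key structural input is a description of these curves in the smooth setting. A $K_{\mathcal F}$-negative rational curve $C$ is tangent to a leaf $L$. Since $\mathcal F$ is smooth, $K_{\mathcal F}|_L = K_L$, so $C$ is $K_L$-negative on the smooth leaf, and the leaves of $\mathcal F$ near $C$ give a local product-like description. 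From this I would deduce that the contraction $\varphi_R$ is either a fibration (Mori fibre space), or divisorial, or small, with exceptional locus a union of leaf-tangent families.

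Second, I would construct flips via the existence of flips for F-dlt foliations (or, more concretely, via a relative adjoint MMP for a suitable klt pair $(X,\Delta)$ with $K_X+\Delta \equiv_{\varphi_R} \lambda K_{\mathcal F}$, using that the normal bundle of a smooth co-rank one foliation restricted to the exceptional locus is controlled). I would then check that after each divisorial contraction or flip the new foliation $\mathcal F'$ is still F-dlt, with $X'$ $\mathbb Q$-factorial klt, so that the step can be iterated. Smoothness itself is not preserved, but the weaker class of F-dlt foliations on $\mathbb Q$-factorial klt varieties is stable under these operations.

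Third comes termination, which I expect to be the main obstacle. In the non pseudo-effective case, I would run an MMP with scaling of an ample $A$. Using that each step is simultaneously a step of a $(K_X+\Delta_i)$-MMP for suitable klt pairs, I would either appeal to termination with scaling for klt pairs with big boundary, or to a special termination argument. The latter would show that flipped loci eventually avoid the non-klt-type centres, and then use the smoothness of the original foliation to bound the discrepancies. This forces termination, with the final output being a Mori fibre space, since $K_{\mathcal F}$ is not pseudo-effective. In the big case, I would write $K_{\mathcal F}\sim_{\mathbb Q} A+E$ with $A$ ample and $E\ge 0$, and perturb to reduce to an MMP whose steps are $(K_{\mathcal F}+\epsilon A)$-negative. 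I would then use the comparison with a klt pair $(X,\Delta)$ with $\Delta$ big, together with finiteness of models (as in BCHM), to obtain termination with scaling and hence a model with $K_{\mathcal F'}$ nef.

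The hardest point is justifying the comparison $K_{\mathcal F}\equiv K_X+\Delta$ relatively over each contraction, with $(X,\Delta)$ klt and $\Delta$ big, uniformly along the MMP. For this I would first try the adjunction-type formula $K_{\mathcal F}=K_X+N^*_{\mathcal F}$ combined with positivity of the conormal bundle along $K_{\mathcal F}$-negative curves. This positivity comes from the leaves being locally a smooth fibration near the curves, and gives $-N^*_{\mathcal F}$ relatively ample on the exceptional locus.
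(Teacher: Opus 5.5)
There is a genuine gap, and it sits exactly at the point you single out as hardest. What drives the argument is a vanishing, not a positivity. Bott's partial connection gives $c_1(N^*_{\mathcal F})\cdot C=0$ for every curve $C$ tangent to a smooth co-rank one foliation. Since $K_{\mathcal F}=K_X-c_1(N^*_{\mathcal F})$ (your formula has the wrong sign), this means $K_{\mathcal F}\cdot C=K_X\cdot C$. Every $K_{\mathcal F}$-negative extremal ray is spanned by a tangent curve, so $N^*_{\mathcal F}$ is numerically trivial over each contraction $c_R$. Hence $-N^*_{\mathcal F}$ is \emph{not} relatively ample, and the comparison you are looking for is simply $K_{\mathcal F}\equiv_{c_R}K_X$.

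This is what lets the paper realise every contraction and flip as a $K_X$-negative one, using the contraction theorem and existence of flips for lc pairs. Termination then comes from a \emph{single} klt pair with big boundary, to which termination for klt pairs with big boundary applies:
\begin{itemize}
\item in the big case, $(X,\epsilon D)$ with $D\sim_{\mathbb Q}K_{\mathcal F}$ effective;
\item in the non-pseudo-effective case, $(X,\epsilon A)$. Here the extra observation is that the scaling numbers satisfy $\lambda_i\geq\tau>0$, where $\tau$ is the pseudo-effective threshold of $K_{\mathcal F}$ with respect to $A$. So a fixed $\epsilon<\tau$ gives $(K_{X_i}+\epsilon A_i)\cdot R_i=(\epsilon-\lambda_i)A_i\cdot R_i<0$ at every step.
\end{itemize}
Klt pairs $(X,\Delta_i)$ that change from step to step, as in your sketch, yield no termination statement.

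The second gap is the class of foliations you carry through the program. The cone, contraction and flip theorems for F-dlt co-rank one foliations that you invoke are established for threefolds, not in arbitrary dimension; providing substitutes is most of the work here. Moreover, F-dlt is too weak to preserve the vanishing above. F-dlt foliations may be singular in codimension two, and there Bott vanishing fails: for an invariant line $C$ of a degree $d$ foliation on $\mathbb P^2$ one has $N^*_{\mathcal F}\cdot C=-(d+2)$. After that, the $K_{\mathcal F}$-MMP is no longer a $K_X$-MMP.

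The paper instead uses quasi-smooth foliations with canonical singularities on $\mathbb Q$-factorial terminal varieties. It proves Bott vanishing for them when $\Sing(X,\mathcal F)$ has codimension at least three. The key steps are showing, via the Araujo--Druel extension lemma, that $c_1(N^{[*]}_{\mathcal F})$ still comes from $H^1(X,N^{[*]}_{\mathcal F})$, and pulling back reflexive forms on rational singularities. It then shows this class is preserved by each step:
\begin{itemize}
\item $X'$ stays terminal, so $\Sing X'$ has codimension at least three;
\item canonicity and the negativity lemma rule out codimension-two components of $\Sing\mathcal F'$;
\item $N^{[*]}_{\mathcal F}$ is trivial over the contraction, so it descends to a line bundle;
\item a Malgrange-type theorem of Cerveau--Lins Neto restores local first integrals.
\end{itemize}
Those local first integrals are also what make the cone theorem provable in every dimension, via adjoint partial resolutions built from them and induction on dimension. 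Your local product description of the leaves near $C$ only concerns the initial smooth model and does not replace any of this.
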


Smooth co-rank one holomorphic foliations on projective varieties are rather special and are perhaps amenable to a precise classification.  For example, such a classification has been achieved on surfaces by Brunella, \cite{MR1474805}, and by Touzet, \cite{Touzet08}, in the case that the canonical class of the foliation is trivial.
Theorem \ref{intro_thm} should be viewed as a key first step toward this classification in higher dimensions.  We remark that a smooth foliation on a surface is either a fibration in rational curves or is minimal, i.e. the canonical class of the foliation is nef, and this fact simplifies the classification considerably.  In higher dimensions it is easy to produce examples where this dichotomy fails and therefore a birational simplification of the foliation is necessary in order to achieve a reasonable classification.

An important ingredient in the proof of Theorem \ref{intro_thm} which is also of independent interest is the cone theorem for co-rank one foliations which admit local first integrals.

\begin{theorem}[ = Theorem \ref{t_cone}]
Let $X$ be a normal projective variety, let $\mathcal F$ be a
co-rank one foliation on $X$ which admits local holomorphic first integrals, and
let $\Delta\geq0$ be a $\mathbb Q$-divisor such that
$K_{\mathcal F}+\Delta$ is $\mathbb Q$-Cartier.

Then we may write
\[
\overline{\operatorname{NE}}(X)
=
\overline{\operatorname{NE}}(X)_{K_{\mathcal F}+\Delta\geq0}
+Z_{-\infty}
+\sum_i\mathbb R_{\geq0}[\xi_i],
\]
where each $\xi_i$ is a rational curve tangent to $\mathcal F$
satisfying
\[
0<-(K_{\mathcal F}+\Delta)\cdot\xi_i\leq2\dim X,
\]
and $Z_{-\infty}$ is the closed cone generated by the classes
of curves contained in a non-log canonical centre of
$(\mathcal F,\Delta)$.
\end{theorem}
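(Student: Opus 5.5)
The plan is to reduce to the cone theorem for log canonical pairs on the local leaf spaces, by bend-and-break on the quotient. Since $\mathcal F$ has local holomorphic first integrals, around each point $x\in X$ there is a neighbourhood $U$ and a holomorphic map $f\colon U\to B$ to a one-dimensional germ whose fibres are the leaves. The first step is a foliated adjunction/canonical bundle formula along these local fibrations. It should give, for any curve $C$ tangent to $\mathcal F$, a comparison between $(K_{\mathcal F}+\Delta)\cdot C$ and $(K_X+\Delta+\Theta)\cdot C$. Here $\Theta$ is a boundary recording the non-reduced fibres, i.e. the multiplicities of the first integral, with coefficients of the form $(m-1)/m$ or $1$, chosen so that $K_{\mathcal F}=K_X-f^*K_B-\text{(ramification)}$. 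On curves tangent to the foliation, $f^*K_B$ is locally trivial, so the correction term has degree zero on $C$.

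The argument then proceeds in three steps.

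\textbf{Step 1.} Take an extremal face $F$ of $\overline{\operatorname{NE}}(X)$ that is $(K_{\mathcal F}+\Delta)$-negative and not contained in $Z_{-\infty}$. Choose a supporting nef divisor $H_F$. Show first that every $(K_{\mathcal F}+\Delta)$-negative extremal class outside $Z_{-\infty}$ is represented by curves tangent to $\mathcal F$. For this I would use Miyaoka--Bogomolov--McQuillan type arguments, or the standard fact that curves transverse to $\mathcal F$ have $K_{\mathcal F}\cdot C\ge$ (something nonnegative modulo $\Delta$). Concretely, for a curve $C$ not tangent to a co-rank one foliation with local first integrals, I expect $(K_{\mathcal F}+\Delta)\cdot C\ge (K_X+\Delta)\cdot C-(\text{normal})$ to be controlled. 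This is where I would first try the argument of Spicer's threefold paper: via the tangency order, $K_{\mathcal F}\cdot C = K_X\cdot C + \deg N_{\mathcal F}|_C$ and $\deg N_{\mathcal F}|_C\ge$ tangency count $\ge 0$ for transverse curves.

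\textbf{Step 2.} For curves tangent to $\mathcal F$, use the local first integrals to show that the relevant locus lies in an algebraic subvariety $W$ swept by leaves. Pass to a resolution $\nu\colon W^\nu\to W$ and apply foliated adjunction, so that $\nu^*(K_{\mathcal F}+\Delta)=K_{W^\nu}+\Delta_{W^\nu}$ on the leaf directions. Then run bend-and-break (Miyaoka--Mori), which produces rational curves $\xi$ through a general point with $-(K_{\mathcal F}+\Delta)\cdot\xi\le 2\dim X$. Because $W^\nu$ is covered by leaves, these rational curves remain tangent to $\mathcal F$. The bound $2\dim X$ (rather than $\dim X+1$) is the usual loss when working with a possibly non-lc or singular ambient.

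\textbf{Step 3.} With the length bound in hand, the standard formal argument (Kollár--Mori, Theorem 3.7 style) produces the decomposition. It gives local discreteness of extremal rays away from the $K_{\mathcal F}+\Delta\ge0$ part and $Z_{-\infty}$, and then the closure statement.

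I expect the main obstacle to be Step 1/2 in the non-algebraically-integrable case. Leaves may be dense, so one must ensure the negative curves live in a proper subvariety that is an honest algebraic family of tangent curves. I would try to handle this by showing that the exceptional locus of the contraction associated to $H_F$ is $\mathcal F$-invariant; local first integrals force invariant subvarieties to be unions of leaves of the restricted foliation.
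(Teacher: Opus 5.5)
There is a genuine gap. Neither the production of the rational curves nor their tangency is actually secured.

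\textbf{Step 1 rests on a false inequality.} Transverse curves can be $K_{\mathcal F}$-negative even for a smooth foliation with a global first integral. On $X=\mathbb P^1\times\mathbb P^1$, with $\mathcal F$ given by the fibres of the first projection $p_1$, one has $K_{\mathcal F}=p_2^*\mathcal O_{\mathbb P^1}(-2)$, and the diagonal is a section of $p_1$ of $K_{\mathcal F}$-degree $-2$. Your identity $K_{\mathcal F}\cdot C=K_X\cdot C+N_{\mathcal F}\cdot C$, combined with $N_{\mathcal F}\cdot C\geq0$, only bounds $K_{\mathcal F}\cdot C$ from below by $K_X\cdot C$, which says nothing about its sign. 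Moreover $N_{\mathcal F}\cdot C\geq0$ itself fails for transverse curves of genus $g\geq2$: restricting the defining twisted $1$-form only gives $N_{\mathcal F}\cdot C\geq2-2g$. So tangency of the generators cannot be obtained by numerically discarding transverse curves; it has to come out of how $\xi$ is constructed.

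Likewise, the comparison $K_{\mathcal F}\sim K_X+X_y-f^*(K_Y+\{y\})$ of Proposition~\ref{prop_canonical_comparison} is local and depends on the chosen fibre $X_y$. It never produces a global log canonical pair on $X$ to which a classical cone theorem could be applied. The paper uses it only fibrewise, in Lemmas~\ref{l_local_cone} and \ref{l_local_mmp}, which feed into Theorem~\ref{t_adj_model}.

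\textbf{Step 2 has no mechanism producing $W$.} The one you suggest at the end is circular: no contraction associated to $H_F$ exists before the cone theorem, and the paper constructs contractions only afterwards, under quasi-smoothness (Theorem~\ref{t_contraction}). It is also false, since exceptional loci need not be invariant. Blow up a section of the projection $Y\times P\to Y$ onto a curve, with $\mathcal F$ the foliation by fibres and $\dim P\geq2$. This gives a $K_{\mathcal F}$-negative extremal contraction whose exceptional divisor dominates $Y$, although every contracted curve is tangent.

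An adjunction of the form $\nu^*(K_{\mathcal F}+\Delta)=K_{W^\nu}+\Delta_{W^\nu}$ is available only when $W$ is an invariant divisor (Proposition~\ref{prop_lc_fibre_adjunction}). For a non-invariant divisor one gets $K_{\mathcal G}+\Gamma_S$ for the restricted foliation $\mathcal G$. For higher-codimension $W$ one must first extract a divisor over it. Finally, Miyaoka--Mori would need a covering family of negative curves on $W^\nu$, which you do not have.

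\textbf{What the paper does instead.} It handles exactly the non-invariant situation by induction on $\dim X$. After passing crepantly to the klt $\mathbb Q$-factorial model of Theorem~\ref{t_adj_model}, it takes an exposed ray $R$ supported by $H_R=D+A$.
\begin{enumerate}
\item If $H_R$ is not big, a foliated bend-and-break of Bogomolov--McQuillan type (\cite[Corollary~2.28]{Spicer20}, used as in \cite[Lemma~3.12]{ACSS}) directly gives a rational curve $\xi$ tangent to $\mathcal F$ with $H_R\cdot\xi=0$ and $-D\cdot\xi\leq2n$.
\item If $H_R$ is big, it picks a prime divisor $G$ with $G\cdot R<0$. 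It then takes a maximal log canonical centre $W_0\subseteq G$ carrying $R$ at the minimal threshold $\lambda_0$, and extracts a log canonical place $E$ over $W_0$ via Theorem~\ref{t_adj_model} and Lemma~\ref{l_only_lc_extracted}. Foliated adjunction \cite{CS23b} is applied to $E$. If $E$ is non-invariant, its normalisation carries a co-rank one foliation with local first integrals, and the induction hypothesis applies. If $E$ is invariant, the cone theorem for log canonical pairs applies.
\end{enumerate}
Tangency of the pushed-forward curve is then checked by composing local first integrals with $\rho$. These two ingredients are what your proposal is missing: the foliated bend-and-break in the non-big case, and induction through non-invariant log canonical places in the big case.
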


\subsection*{Sketch of proof} As might be expected, we first prove the cone theorem and then prove existence of minimal models.  The basic strategy for proving the cone theorem is modelled on that of \cite{ACSS}.  The idea is to proceed by induction on dimension.  We first show that the cone theorem in dimension $\leq n-1$ implies the existence of special modifications of the foliation analogous to dlt modifications for varieties in dimension $=n$ (this step makes use of recent developments in the MMP for projective morphisms between complex analytic spaces, cf. \cite{Fuj22,DHP24,LM23}).  We then show that the existence of these special modifications in dimension $=n$ and the cone theorem in dimension $\leq n-1$ imply the cone theorem in dimension $=n$.

One main difficulty in running the MMP for smooth foliations on smooth varieties is that one will typically lose smoothness of the underlying variety (and therefore smoothness of the foliation)
in the course of the MMP.  A more flexible notion of non-singularity of foliations on singular varieties must therefore be found which both has many of the desirable properties of smooth foliations 
on smooth varieties and which is also preserved by the surgery operations of the MMP.  We propose here the class of quasi-smooth foliations and show that this class addresses these requirements. 

A key feature of the quasi-smooth foliations considered here is that the vanishing statement arising from the Bott partial connection on the smooth locus extends across the singular locus. This ensures that for any curve $C$ tangent to $\mathcal F$ we have $K_{\mathcal F}\cdot C = K_X\cdot C$.   Since the cone theorem guarantees that every $K_{\mathcal F}$-negative  extremal ray is spanned by the class of a curve tangent to the foliation, we therefore see that the steps of the $K_{\mathcal F}$-MMP can all be realised as steps in a $K_X$-MMP. 

Showing that quasi-smoothness is preserved by the MMP requires some care.  The key point is to show that the steps of the MMP do not destroy the existence of local holomorphic first integrals of the foliation.  We recall for the reader a theorem due to Malgrange which states that if a co-rank one foliation on a smooth germ $0 \in \mathbb C^n$ has a singular set of codimension $\ge 3$, then the foliation admits a holomorphic first integral.  We utilise a variant of this result on singular spaces based on work of Cerveau and Lins Neto to show that the MMP preserves the existence of local first integrals.

\subsection*{AI Declaration}
All original mathematical ideas in this paper are due to the authors.
ChatGPT~6 was used to assist in checking the arguments and improving
the exposition. The authors take full responsibility for the content
and mathematical correctness of the paper.

\subsection*{Acknowledgements}
We would like to thank S. Druel and J. V. Pereira for useful comments and conversations.

The first author was partially supported by a Simons Collaboration Grant. 
The second author was partially supported by EPSRC.

\section{Preliminaries}

By a variety we mean an irreducible and reduced complex analytic
space. Unless otherwise specified, by a curve we mean an
integral projective curve.

\subsection{Notation}
\label{s_preliminaries}
Let \(X\) be a normal  variety. A {\bf prime divisor} on \(X\)
is an irreducible and reduced closed analytic subvariety of codimension
one. A {\bf divisor} on \(X\) is a formal sum
\[
D=\sum_{i\in I} a_iD_i,
\]
where the \(D_i\) are distinct prime divisors, \(a_i\in\mathbb Z\), and
the sum is locally finite. Equivalently, its support
\[
\operatorname{Supp}D
:=
\bigcup_{a_i\neq0}D_i
\]
is a closed analytic subset of \(X\).  If \(a_i\in\mathbb Q\), respectively
\(a_i\in\mathbb R\), then \(D\) is called a {\bf \(\mathbb Q\)-divisor},
respectively an {\bf \(\mathbb R\)-divisor}. If $\Sigma$ is a prime divisor on $X$,
we denote by $m_\Sigma D$ the coefficient of $D$ along $\Sigma$. 
An \(\mathbb R\)-divisor \(D\) on \(X\) is {\bf \(\mathbb R\)-Cartier
at a point \(x\in X\)} if there exists an open neighbourhood \(U\) of
\(x\) such that \(D|_U\) is a finite \(\mathbb R\)-linear combination
of Cartier divisors on \(U\).  The divisor \(D\) is
{\bf \(\mathbb R\)-Cartier} if it is \(\mathbb R\)-Cartier at every
point of \(X\) (cf. 
\cite[Definition~2.32]{Fuj22}).
Given two $\mathbb R$-divisors $D_1$ and $D_2$ on $X$, we define 
\[
D_1\wedge D_2\coloneqq \sum_{\Sigma} \min \{ m_\Sigma D_1,m_\Sigma D_2\} \Sigma. 
\]

Let \(X\) be a normal variety, let \(Y\) be a smooth analytic curve,
and let
\(
f\colon X\longrightarrow Y
\)
be a dominant holomorphic morphism. A prime divisor \(P\subset X\)
is called {\bf \(f\)-vertical} if \(f(P)\) is a point. For every such \(P\),
let
\[
m_P:=\operatorname{mult}_P f^*(f(P)).
\]
The {\bf ramification divisor} of \(f\) is the effective Weil divisor
\[
R(f):=\sum(m_P-1)P,
\]
where the sum runs over all the $f$-vertical prime divisors. 
Note that the sum is locally finite.

\medskip 
A proper morphism \(f\colon X\to Z\) between normal varieties is
called a {\bf contraction} if the natural morphism
\(
\mathcal O_Z\longrightarrow f_*\mathcal O_X
\)
is an isomorphism.

Let \(\pi\colon X\to Z\) be a projective morphism of normal varieties
and let \(W\subset Z\) be a compact subset. We denote by
\(N_1(X/Z;W)\) the real vector space generated by 
curves \(C\subset X\) such that \(\pi(C)\) is a point of \(W\), modulo
the following numerical equivalence relation:
two real \(1\)-cycles \(\alpha\) and \(\beta\) supported over \(W\)
are {\bf numerically equivalent over \(W\)} if
\(
D\cdot\alpha=D\cdot\beta
\)
for every Cartier divisor \(D\) defined on a neighbourhood of
\(\pi^{-1}(W)\) (we refer to \cite[Section~4.4]{Fuj24} for details). 
Let \(\pi\colon X\to Z\) be a projective morphism of normal
varieties and let \(z\in Z\). 
By Nakayama's finiteness theorem (cf. 
\cite[Theorem~4.5.2]{Fuj24}), the vector space
\(N_1(X/Z;\{z\})
\)
is finite-dimensional. We denote by
\[
\overline{\operatorname{NE}}(X/Z;\{z\})
\subset N_1(X/Z;\{z\})
\]
the closure of the convex cone generated by the classes of curves \(C\subset X\) such that \(\pi(C)=z\).
If \(D\) is an \(\mathbb R\)-Cartier \(\mathbb R\)-divisor on \(X\),
then we denote
\[
D^\perp
\coloneqq
\{\xi\in N_1(X/Z;\{z\})\mid D\cdot\xi=0\}
\]
and 
\[
\overline{\operatorname{NE}}(X/Z;\{z\})_{D\ge 0}\coloneqq
\{
\xi\in \overline{\operatorname{NE}}(X/Z;\{z\})\mid D\cdot\xi\ge 0
\}.
\]

Let \(X\) be a normal projective variety, let \(W\subseteq X\) be an
irreducible subvariety, and let
\(
\nu\colon W^\nu\to W
\)
be its normalisation. Let \(R\subseteq
\overline{\operatorname{NE}}(X)\) be a ray. We say that \(W\)
{\bf carries} \(R\) if
\[
R\subseteq
\operatorname{im}\left(
\overline{\operatorname{NE}}(W^\nu)
\longrightarrow
\overline{\operatorname{NE}}(X)
\right),
\]
where the morphism is induced by the composition
\(W^\nu\to W\hookrightarrow X\).

\medskip

We refer to \cite[Section 2.6]{CS18} for some of the standard definitions in the Minimal Model Program. Let 
$\varphi\colon X\dashrightarrow Y$ be a birational map between normal varieties.  
The {\bf exceptional locus}  of $\varphi$ is the closed subset $\Exc\varphi$ of $X$ 
where $\varphi$ is not an isomorphism. We say that $\varphi$ is  a {\bf birational contraction} if $\Exc \varphi^{-1}$ does not contain any divisor. Let $\varphi\colon X\dashrightarrow Y$ be a birational contraction between normal varieties and let  $D$ be a $\mathbb Q$-Cartier $\mathbb Q$-divisor on $X$ such that $D_Y\coloneqq \varphi_*D$ is also $\mathbb Q$-Cartier. Then $\varphi$ is called {\bf $D$-negative} (resp. {\bf $D$-trivial}) if there exist a normal variety $W$ and birational morphisms $p\colon W\to X$ and $q\colon W\to Y$  resolving the indeterminacy locus of $\varphi$  such that $p^*D=q^*D_Y+E$ where  $E\ge 0$ is a $\mathbb Q$-divisor whose support coincides with the exceptional locus of $q$ 
(resp. $p^*D=q^*D_Y$). Note that by the negativity lemma (cf. \cite[Lemma 3.39]{KM98}), it follows that if $\varphi$ is a composition of steps of a $D$-MMP, then $\varphi$ is $D$-negative.

\medskip
\subsection{Foliations}
Let $X$ be a normal variety. A {\bf foliation of rank $r$} on $X$ consists of a coherent subsheaf $T_\cal F \subset T_X$ of rank $r$ such that
\begin{enumerate}
\item $T_{\cal F}$ is saturated, i.e. $T_X/T_{\cal F}$ is torsion free; and

\item $T_{\cal F}$ is closed under Lie bracket.
\end{enumerate}
The {\bf co-rank} of $\cal F$  is its co-rank as a subsheaf of $T_X$.  
We define the {\bf canonical reflexive sheaf} of $\mathcal F$ by
\(
\omega_{\mathcal F}\coloneqq(\det T_{\mathcal F})^*.
\)
When this sheaf is represented by a Weil divisor, we denote
such a divisor by $K_{\mathcal F}$ and we refer to it as the {\bf canonical divisor} of $\mathcal F$.
On general complex analytic spaces, statements involving
canonical divisors are understood locally, or as statements
about the corresponding canonical reflexive sheaves.
In the algebraic setting, we choose canonical divisors globally.
Moreover, we define the {\bf conormal sheaf} to be $N^{[*]}_{\mathcal F} \coloneqq (T_X/T_{\mathcal F})^*$.
For any positive integer $d$, we denote $\Omega^{[d]}_X\coloneqq (\wedge^d\Omega^1_X)^{**}$.
Then, the foliation $\cal F$ induces a map 
\[
\phi\colon (\Omega^{[r]}_X \otimes \cal O_X(-K_{\cal F}))^{**} \rightarrow \cal O_X.\]
The {\bf singular locus} of $\cal F$, denoted by $\Sing \cal F$, is the cosupport of the image of $\phi$. 
We define the {\bf total singular set} $\Sing(X, \mathcal F) \coloneqq \Sing(X) \cup \Sing(\mathcal F)$.

\medskip

Let $\sigma\colon Y\dashrightarrow X$ be a dominant  map between normal  varieties and let $\cal F$ 
be a foliation of rank $r$ on $X$. We denote by $\sigma^{-1}\cal F$ the {\bf induced foliation} 
on $Y$ (e.g. see \cite[Section 3.2]{Druel21}).  

We refer to \cite[\S2.4]{CS18}  for a recollection on the definition of
foliation singularities from the perspective of the MMP. 
Let $\mathcal F$ be a foliation on a normal variety $X$ and let $E$ be a reduced divisor
on $X$. We denote by
\(
E_{\noninv}
\)
the sum of the prime components of $E$ which are not invariant under
$\mathcal F$.

\subsection{Local first integrals for foliations}

Let $X$ be a normal variety and let $\mathcal F$ be a foliation on $X$.
A holomorphic map $\Phi\colon U\to V$, where $U\subseteq X$ is a 
Euclidean open subset and $V$ is a complex analytic space, is called a
{\bf holomorphic first integral} for $\mathcal F|_U$ if
$T_{\mathcal F}|_U=T_\Phi$.
We say that $\mathcal F$ {\bf admits local holomorphic first integrals}
if, for every $x\in X$, there exist a Euclidean open neighbourhood
$x\in U_x$ and a holomorphic first integral
$\Phi_x\colon U_x\to V_x$ for $\mathcal F|_{U_x}$.

We say that $\mathcal F$ is {\bf quasi-smooth at a point $x \in X$} if

\begin{enumerate}
\item $N^{[*]}_{\mathcal F}$ is locally free at $x$;
\item there exists a Euclidean open neighbourhood $U_x$ and  $f_1, \dots, f_q \in \mathcal O_{U_x}$ where $q = \dim X - \rk \mathcal F$ such that 
$df_1, \dots, df_q$ generate $N^{[*]}_{\mathcal F}|_{U_x}$; and 
\item if we set $\Phi_x:=(f_1, \dots, f_q)\colon U_x \to \mathbb C^q$, then for all $z \in \mathbb C^q$, $\Sing(X, \mathcal F) \cap \Phi_x^{-1}(z)$ is of dimension at most $ \rk \mathcal F-2$.
\end{enumerate}

Note that $\Phi_x$ is a holomorphic first integral for $\mathcal F|_{U_x}$. We say that $\mathcal F$ is {\bf quasi-smooth} if it is quasi-smooth at all points $x \in X$.

\begin{remark}
Let $X$ be a normal variety and let 
$\mathcal F$ be a foliation on $X$. 
\begin{itemize}
\item If $\Sing(X, \mathcal F) = \emptyset$, then $\mathcal F$ is quasi-smooth.
\item Let $X$ be a normal variety, let $Y$ be a smooth variety and consider the foliation $\mathcal F$ on $X \times Y$ induced by the projection $X \times Y \to Y$.  Then $\mathcal F$ is quasi-smooth.
\end{itemize}
\end{remark}

Let $X$ be a normal variety and let $\mathcal F$ be a foliation which admits 
local first integrals.  We say that a subvariety $V \subset X$ is {\bf tangent} to $\mathcal F$
if for a general point $x \in V$ and any first integral $\Phi_x$ 
of $\mathcal F$ in a neighbourhood $U$ of $x$ we have that $\Phi_x(V\cap U)$ is a point.

\begin{remark}
\label{rem_tang_zero}
Note that if $X$ is a variety with rational singularities, $\mathcal F$ is 
a quasi-smooth foliation and $j\colon V \to X$ is a morphism from a variety with rational singularities such that $j(V)$ 
is a subvariety of $X$ which is tangent to $\mathcal F$, then the composition of the natural morphisms
\[j^*N^{[*]}_{\mathcal F} \to j^*\Omega^{[1]}_X \xrightarrow{dj} \Omega^{[1]}_V\]
is identically zero. Here $dj$ is the morphism guaranteed to exist by \cite[Theorem 1.10]{KS21}.
\end{remark}

\begin{definition}[{\cite[Definition~1.1]{CLN08}}]
\label{def_kregular}
Let
\(
(X,x)\subseteq (\mathbb C^N,x)
\)
be an irreducible analytic germ, and let \(0\leq k\leq\dim X\).
We say that \(X\) is {\bf \(k\)-regular at \(x\)} if there exists an
open neighbourhood \(U\subseteq\mathbb C^N\) of \(x\) such that the natural restriction map
\[
H^0(U,\Omega_U^k)
\longrightarrow
H^0\bigl(X_U^\circ,\Omega_{X_U^\circ}^k\bigr)
\]
is surjective, where
\(
X_U\coloneqq X\cap U\)
and
\(
X_U^\circ\coloneqq X_U\setminus\operatorname{Sing}(X_U).
\)
\end{definition}

\begin{remark} \label{r_kregular}
Using the same notations as in Definition \ref{def_kregular}, note that \(0\)-regularity means that every
holomorphic function on \(X_U^\circ\) extends to a holomorphic function
on \(U\). In particular, a normal analytic germ is \(0\)-regular.
\end{remark}

\section{Partial resolutions of foliations admitting local first integrals}

\begin{proposition}
\label{prop_canonical_comparison}
Let \(X\) be a normal analytic variety, let \(Y\) be a smooth analytic
curve, and let
\(
f\colon X\longrightarrow Y
\)
be a dominant holomorphic morphism inducing a co-rank one foliation
\(\mathcal F\) on \(X\). 

Then, 
\[
K_{\mathcal F}
\sim
K_X-f^*K_Y-R(f).
\]
Moreover, let \(\Sigma_Y\subset Y\) be a reduced divisor such that
\(
\Supp R(f)\subseteq f^{-1}(\Sigma_Y)
\),
and set
\(
\Sigma_X:=\bigl(f^{-1}(\Sigma_Y)\bigr)_{\mathrm{red}}
\).

Then
\(
R(f)=f^*\Sigma_Y-\Sigma_X
\),
and therefore
\[
K_{\mathcal F}
\sim
K_X+\Sigma_X-f^*(K_Y+\Sigma_Y).
\]
In particular, these conclusions hold if
\(
f^{-1}(Y\setminus\Sigma_Y)\longrightarrow Y\setminus\Sigma_Y
\)
is smooth.
\end{proposition}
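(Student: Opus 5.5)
Since $X$ is normal, two Weil divisors (or reflexive rank one sheaves) agree as soon as they agree outside a closed subset of codimension at least two. The plan is therefore to remove a codimension two set $Z\subset X$ and compare $\omega_{\mathcal F}$ with $\omega_X\otimes f^*\omega_Y^{-1}\otimes\mathcal O_X(-R(f))$ on $X\setminus Z$. Concretely, take $Z$ to be the union of $\Sing X$, the pairwise intersections of the $f$-vertical prime divisors meeting the locus where $df$ vanishes, and the singular loci of the reduced vertical divisors. On $X^\circ:=X\setminus Z$ the space is smooth, and near each point $x$ we are in one of two situations. Either $f$ is a submersion at $x$, or $x$ lies on exactly one vertical prime divisor $P$, smooth at $x$, along which $df$ vanishes. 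We also allow $x$ to lie in the codimension $\geq2$ zero locus of $df$ off vertical divisors, which we simply throw into $Z$ as well.

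On $X^\circ$ the tangent sheaf of the foliation is $T_{\mathcal F}=\ker(df\colon T_X\to f^*T_Y)$, which is saturated. Dually, the conormal sheaf $N^{*}_{\mathcal F}$ is the saturation of the image of $f^*\Omega^1_Y\to\Omega^1_X$. We then use the exact sequence
\[
0\to N^*_{\mathcal F}\to\Omega^1_{X^\circ}\to\Omega^1_{\mathcal F}\to 0,
\]
which holds off codimension two since $\mathcal F$ is smooth there. Taking determinants gives $K_X\sim K_{\mathcal F}+c_1(N^*_{\mathcal F})$ on $X^\circ$, hence on $X$.

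The key local computation identifies $N^*_{\mathcal F}$ with $f^*\Omega^1_Y\otimes\mathcal O_X(R(f))$. Let $t$ be a local coordinate on $Y$ at $f(P)$. Near a general point of $P$ choose local coordinates with $P=\{z_1=0\}$ and $f^*t=u\,z_1^{m_P}$, where $u$ is a unit. Then
\[
d(f^*t)=z_1^{m_P-1}\bigl(m_Pu\,dz_1+z_1\,du\bigr),
\]
and the second factor is a nowhere-vanishing $1$-form near $x$, since its $dz_1$ coefficient is a unit there. So the saturation of $f^*dt$ is $z_1^{-(m_P-1)}f^*dt$, which gives $N^*_{\mathcal F}=f^*\Omega^1_Y(R(f))$ locally. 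Away from vertical divisors the same computation with $m_P=1$ shows no correction. Hence $K_{\mathcal F}\sim K_X-f^*K_Y-R(f)$. The main care needed is in the codimension bookkeeping: showing that the locus where $df$ vanishes is contained in the vertical divisors up to codimension two, and that $\omega_{\mathcal F}$ defined as $(\det T_{\mathcal F})^*$ agrees with $\det\Omega^1_{\mathcal F}$ on $X^\circ$.

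For the second part, $f^*\Sigma_Y=\sum_P m_PP$, summed over prime divisors $P$ with $f(P)\in\Sigma_Y$. Each such $P$ is vertical, since $f(P)$ is a closed analytic irreducible subset of a point set. Hence $f^*\Sigma_Y-\Sigma_X=\sum_{f(P)\in\Sigma_Y}(m_P-1)P$. This equals $R(f)$ because every vertical $P$ with $m_P>1$ lies in $\Supp R(f)\subseteq f^{-1}(\Sigma_Y)$. Substituting gives
\[
K_{\mathcal F}\sim K_X+\Sigma_X-f^*(K_Y+\Sigma_Y).
\]
Finally, if $f$ is smooth over $Y\setminus\Sigma_Y$, then any vertical $P$ mapping outside $\Sigma_Y$ has reduced fibre there, so $m_P=1$. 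Therefore $\Supp R(f)\subseteq f^{-1}(\Sigma_Y)$ and the hypothesis of the second part holds.
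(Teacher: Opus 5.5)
Your proposal is correct and takes essentially the same route as the paper. Both arguments reduce to a codimension-one computation at the generic point of each vertical prime divisor $P$, use $f^*t=u\,z_1^{m_P}$ to see that the image of $f^*\Omega^1_Y$ in $\Omega^1_X$ vanishes to order $m_P-1$ along $P$, and take determinants in the conormal sequence; the second part is the same coefficient comparison. You spell out the saturation and the codimension-two bookkeeping (for instance, that a divisor along which $df$ vanishes must be vertical), which the paper compresses into ``taking reflexive determinants''.
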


\begin{proof}
Since the statement is determined in codimension one, we may compute
at the generic point of a vertical  prime divisor \(P\subset X\). If \(t\) is a
local parameter at \(f(P)\), then
\[
f^*t=u x^{m_P}
\]
at the generic point of \(P\), where \(u\) is a unit, \(x\) is a
local parameter for \(P\) and $m_P\ge 1$. Thus, the image of
\(
f^*\Omega_Y^1\longrightarrow\Omega_X^{[1]}
\)
vanishes to order \(m_P-1\) along \(P\). Taking reflexive determinants
gives
\[
K_{\mathcal F}
\sim
K_X-f^*K_Y-R(f).
\]

Now suppose that
\(
\Supp R(f)\subseteq f^{-1}(\Sigma_Y)
\).
If \(P\) lies over a point of \(\Sigma_Y\), then its coefficient in
\(f^*\Sigma_Y-\Sigma_X\) is \(m_P-1\). If \(P\) does not lie over
\(\Sigma_Y\), then \(m_P=1\). Thus,
\[
R(f)=f^*\Sigma_Y-\Sigma_X,
\]
as claimed. 

Finally, smoothness over \(Y\setminus\Sigma_Y\) implies that
\(m_P=1\) for every vertical prime divisor \(P\) lying over
\(Y\setminus\Sigma_Y\), and hence our claim follows. 
\end{proof}

\begin{lemma}
\label{l_lc_comparison}
Let \(X\) be a normal analytic variety, let \(Y\) be a smooth analytic
curve, and let
\(
f\colon X\to Y
\)
be a dominant holomorphic morphism inducing a co-rank one foliation
\(\mathcal F\) on \(X\). Let \(B\geq0\) be an \(\mathbb R\)-divisor,
let \(y\in Y\), and set
\(
X_y:=\bigl(f^{-1}(y)\bigr)_{\mathrm{red}}.
\)
Let \(x\in X_y\). Assume that  \((X,X_y+B)\) is log canonical at \(x\). 

Then
\((\mathcal F,B)\) is log canonical at \(x\).
\end{lemma}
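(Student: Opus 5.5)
We work near $x$, shrinking $X$ and $Y$ so that $Y$ is a disc centred at $y$, and take $\Sigma_Y=y$. Since $f$ is smooth in codimension one over $Y\setminus\{y\}$ after shrinking, Proposition \ref{prop_canonical_comparison} gives
\[
K_{\mathcal F}\sim K_X+X_y-f^*(K_Y+y)
\]
near $x$, with $X_y=\Sigma_X$. More carefully, the vertical divisors through points near $x$ all lie over $y$ once we shrink. As $f^*(K_Y+y)$ is Cartier and locally trivial, $K_{\mathcal F}+B$ and $K_X+X_y+B$ agree as $\mathbb R$-Cartier classes near $x$.

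Let $\pi\colon X'\to X$ be any proper bimeromorphic morphism from a normal variety over a neighbourhood of $x$, and let $E$ be an exceptional prime divisor with centre containing $x$. Write $\mathcal F'=\pi^{-1}\mathcal F$, which is induced by $f'=f\circ\pi$. Applying Proposition \ref{prop_canonical_comparison} to $f'$ with the same $\Sigma_Y=y$ gives
\[
K_{\mathcal F'}\sim K_{X'}+X'_y-f'^*(K_Y+y),
\]
where $X'_y=(f'^{-1}(y))_{\mathrm{red}}$. This includes every $\pi$-exceptional divisor over $y$, and those are exactly the exceptional divisors meeting $\pi^{-1}(x)$ after shrinking.

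Pulling back and comparing the two formulas, the $f^*(K_Y+y)$ terms cancel. Hence the foliated discrepancy $a(E,\mathcal F,B)$ equals $a(E,X,X_y+B)+\epsilon(E)$, where $\epsilon(E)=1$ if $E$ is $f'$-vertical and $0$ otherwise. Equivalently, if $\pi^*(K_X+X_y+B)=K_{X'}+\pi^{-1}_*(X_y+B)+\sum a_E' E$, then
\[
\pi^*(K_{\mathcal F}+B)=K_{\mathcal F'}+\pi^{-1}_*B+\sum_E\bigl(a'_E-\epsilon(E)\bigr)E
\]
in the sign conventions of \cite{CS18}, up to the strict transform of $X_y$, which is absorbed on both sides.

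Log canonicity of $(X,X_y+B)$ gives coefficient $\le 1$ for every $E$, that is, $a(E,X,X_y+B)\ge -1$. The foliated lc condition requires $a(E,\mathcal F,B)\ge-\epsilon(E)$. For vertical, invariant $E$, where $\epsilon(E)=0$ for the invariance convention, we need $a\ge 0$; for non-invariant $E$ we need $a\ge -1$.

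All exceptional divisors over $x$ are $f'$-vertical, hence $\mathcal F'$-invariant. The comparison must therefore be arranged so that the $+X'_y$ term raises the discrepancy by exactly one on these divisors. From the formulas, the coefficient of $E$ in $K_{\mathcal F'}$-relative terms equals its coefficient in the $(X,X_y)$ pullback minus $1$. This converts $a\ge-1$ into $a\ge0$, which is exactly foliated lc for invariant divisors.

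Non-exceptional components are handled similarly. Components of $B$ that are $f$-vertical are invariant, and they appear with coefficient at most $1$ in $X_y+B$ only if they are not already in $X_y$. This forces their coefficient in $B$ to be $0$, which is what foliated lc requires for invariant components.

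The main obstacle is bookkeeping. One must check that the reduced fibre $X'_y$ contains exactly the exceptional divisors with centre through $x$, and that shrinking makes the relevant $R(f')$ supported over $y$. One must also fix the sign conventions for discrepancies of invariant versus non-invariant divisors as in \cite{CS18}. Once these are settled, the inequality is a direct translation of the comparison formula.
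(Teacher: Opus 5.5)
Your argument is the paper's. Both apply Proposition~\ref{prop_canonical_comparison} to $f$ and to $f\circ\pi$, cancel the $f^*(K_Y+y)$ terms, and read off that the discrepancy rises by exactly one on divisors contained in $X'_y$ and is unchanged otherwise. In the standard convention this reads $a(E,\mathcal F,B)=a(E,X,X_y+B)+1-\epsilon(E)$. Your $\epsilon$ is the indicator of verticality, the opposite of the standard one, and you silently switch back to the standard $\epsilon$ two paragraphs later.

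Two of your side remarks are wrong but harmless:
\begin{itemize}
\item It is not true that every exceptional divisor meeting $\pi^{-1}(x)$ lies over $y$. Blowing up a codimension-two centre through $x$ on which $f$ is non-constant gives a horizontal, non-invariant exceptional divisor. Your formula still covers such $E$, since it gives $a(E,\mathcal F,B)=a(E,X,X_y+B)\geq-1=-\epsilon(E)$. For the formula itself you only need that every \emph{vertical} divisor with centre through $x$ lies over $y$, and this holds because its image under $f\circ\pi$ is a point containing $f(x)=y$.
\item Your sentence on the non-exceptional components is garbled. An $f$-vertical component of $B$ through $x$ is automatically a component of $X_y$, so log canonicity of $(X,X_y+B)$ forces its coefficient in $B$ to vanish. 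This is just the case $\pi=\mathrm{id}$ of the same formula.
\end{itemize}
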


\begin{proof}
We work in a neighbourhood of \(x\). In particular, after shrinking
this neighbourhood, every component of \(R(f)\) which meets it is
contained in \(X_y\). By Proposition~\ref{prop_canonical_comparison},
\[
K_{\mathcal F}+B
\sim_{\mathbb R}
K_X+X_y+B-f^*(K_Y+\{y\}).
\]
In particular, \(K_{\mathcal F}+B\) is \(\mathbb R\)-Cartier near \(x\).

 Let
\(
\mu\colon X'\to X
\)
be a birational morphism 
and let \(E\) be a prime divisor on \(X'\) such that \(\mu(E)\) contains \(x\).
Set \(
f':=f\circ\mu
\),
let \(\mathcal F'\) be the foliation induced on \(X'\), and set
\(
X'_y:=\bigl((f')^{-1}(y)\bigr)_{\mathrm{red}}
\).
Proposition~\ref{prop_canonical_comparison} easily implies that 
\[
a(E,\mathcal F,B)
=
a(E,X,X_y+B)+1-\epsilon(E).
\]
Since \((X,X_y+B)\) is log canonical at \(x\), it follows that 
\[
a(E,\mathcal F,B)
\geq
-1+1-\epsilon(E)
=
-\epsilon(E).
\]
Thus \((\mathcal F,B)\) is log canonical at \(x\).
\end{proof}

\begin{proposition}
\label{prop_lc_fibre_adjunction}
Let \(X\) be a normal \(\mathbb Q\)-factorial analytic variety, let
\(Y\) be a smooth analytic curve, and let
\(
f\colon X\longrightarrow Y
\)
be a dominant holomorphic morphism inducing a co-rank one foliation
\(\mathcal F\) on \(X\).
Let \(\Sigma_Y\subset Y\) be a reduced divisor such that
\[
\Supp R(f)\subseteq f^{-1}(\Sigma_Y),
\]
and set
\(
\Sigma_X:=\bigl(f^{-1}(\Sigma_Y)\bigr)_{\mathrm{red}}.
\)
Let \(B\geq0\) be an \(\mathbb R\)-divisor such that
\(
(X,\Sigma_X+B)
\)
is log canonical.
Let \(y\in\Sigma_Y\), let \(T\) be an irreducible component of
\(
\bigl(f^{-1}(y)\bigr)_{\mathrm{red}},
\)
and let
\(
\nu\colon S\longrightarrow T
\)
be the normalisation. Let \(\iota\colon T\hookrightarrow X\) be the
inclusion and \(n:=\iota\circ\nu\). 

Then there exists an effective
\(\mathbb R\)-divisor \(B_S\) such that \((S,B_S)\) is log canonical
and
\[
n^*(K_{\mathcal F}+B)
\sim_{\mathbb R}
K_S+B_S.
\]
\end{proposition}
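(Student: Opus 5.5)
By Proposition~\ref{prop_canonical_comparison},
\[
K_{\mathcal F}+B\sim_{\mathbb R}K_X+\Sigma_X+B-f^*(K_Y+\Sigma_Y).
\]
The plan is to pull this back to $S$ and apply adjunction for the log canonical pair $(X,\Sigma_X+B)$ along $T$, which is a component of the reduced boundary $\Sigma_X$.

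The first step is to dispose of the term $f^*(K_Y+\Sigma_Y)$. Since $f\circ n$ is constant, equal to $y$, the divisor $n^*f^*(K_Y+\Sigma_Y)$ is the pullback of a divisor on $Y$ via a constant map. After shrinking $Y$ around $y$, the divisor $K_Y+\Sigma_Y$ is linearly equivalent to $0$ near $y$. Even globally, $f^*\mathcal O_Y(D)$ restricted to a fibre is trivial, because the line bundle is trivialised on a neighbourhood of $y$. Hence
\[
n^*(K_{\mathcal F}+B)\sim_{\mathbb R}n^*(K_X+\Sigma_X+B).
\]
Here $X$ is $\mathbb Q$-factorial, so every term is $\mathbb R$-Cartier and the pullbacks make sense.

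The second step writes $\Sigma_X+B=T+\Delta$, with $\Delta\ge0$ not containing $T$. This works because $T$ has coefficient one in $\Sigma_X$ and, by log canonicity, $T\not\subset\Supp B$. We then apply adjunction to the normalisation of a component of the boundary with coefficient one (Kollár's adjunction, \cite[Section 4.1]{Kollar13}-style, or \cite[Lemma 4.8]{Fuj22} in the analytic setting). This produces the different $B_S:=\operatorname{Diff}_S(\Delta)\ge0$ with
\[
n^*(K_X+T+\Delta)\sim_{\mathbb R}K_S+B_S.
\]
Adjunction is usually stated as an equality of $\mathbb Q$-linear equivalence classes on the normalisation. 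This requires $K_X+T$ to be $\mathbb Q$-Cartier, which holds by $\mathbb Q$-factoriality, and the statement extends $\mathbb R$-linearly in $\Delta$.

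The third step, which is the main obstacle, is showing that $(S,B_S)$ is log canonical. This is inversion of adjunction in its easy direction: if $(X,T+\Delta)$ is log canonical near $T$, then $(S,\operatorname{Diff}_S\Delta)$ is log canonical. I would prove it as follows. Take a log resolution $\mu\colon X'\to X$ of $(X,T+\Delta)$ in a neighbourhood of the compact set $T$; such resolutions exist in the analytic setting. Let $S'$ be the strict transform of $T$, which is smooth and maps to $S$. Write
\[
K_{X'}+S'+\Delta'=\mu^*(K_X+T+\Delta),
\]
where all coefficients of $\Delta'$ are at most $1$. Restricting to $S'$ gives
\[
K_{S'}+\Delta'|_{S'}=(\mu|_{S'})^*(K_S+B_S),
\]
and $\Delta'|_{S'}$ is simple normal crossing with coefficients at most $1$, because $S'+\Delta'$ is snc. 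Thus the discrepancies of $(S,B_S)$ computed on $S'$ are at least $-1$. One then varies the resolution, or uses that every divisor over $S$ can be reached by further blowups of $X'$ along strata, to conclude that $(S,B_S)$ is log canonical. Effectivity of $B_S$ is standard from the definition of the different as a pushforward of $\Delta'|_{S'}$, together with the negativity lemma.

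I expect the only delicate points to be technical. One is the existence of log resolutions and of the different over a neighbourhood of the compact fibre component in the analytic category. The other is that $T$ need not be normal; this is why we pass to $S$ and define $B_S$ via $S'\to S$. Both are handled by working on a relatively compact neighbourhood of $T$.
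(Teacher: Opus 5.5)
Your proposal is correct and follows the paper's proof. You use Proposition~\ref{prop_canonical_comparison} and the constancy of $f\circ n$ to replace $n^*(K_{\mathcal F}+B)$ by $n^*(K_X+\Sigma_X+B)$, then apply adjunction along the coefficient-one component $T$ with $B_S=\operatorname{Diff}_S(\Sigma_X-T+B)$; the paper cites the log canonicity of $(S,B_S)$ as standard, while you sketch it (note that $f$ is not assumed proper, so $T$ need not be compact, but everything is local and this does not affect the argument).
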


\begin{proof}
By adjunction, we have
\[
K_S+B_S
\sim_{\mathbb R}
n^*(K_X+\Sigma_X+B),
\]
where
\(
B_S\coloneqq \operatorname{Diff}_S\bigl(\Sigma_X-T+B\bigr)\ge 0
\). 
Since \((X,\Sigma_X+B)\) is log canonical, it follows that \((S,B_S)\) is log canonical.
Since \(f\circ n\) is constant, Proposition \ref{prop_canonical_comparison} implies that  
\[
n^*(K_{\mathcal F}+B)
\sim_{\mathbb R}
n^*(K_X+\Sigma_X+B)
\sim_{\mathbb R}
K_S+B_S,
\]
and our  claim follows.
\end{proof}

\begin{lemma}
\label{l_local_cone}
Let \(\pi\colon X\to Z\) be a projective morphism between normal analytic
varieties, let \(z\in Z\) be a closed point, and assume that  \(X\) is
\(\mathbb Q\)-factorial. Let \(\mathcal F\) be a co-rank one foliation on $X$ and let 
\(B\geq 0\) be a \(\mathbb Q\)-divisor. Suppose, after possibly shrinking \(Z\) around \(z\), that:

\begin{enumerate}
\item \(\mathcal F\) is induced by a holomorphic fibration
\(
f\colon X\longrightarrow Y
\)
onto a smooth analytic curve \(Y\);

\item there exists a holomorphic map \(h\colon Z\to Y\) such that
\( f=h\circ\pi \);  and 

\item if
\(
y:=h(z)\)
and
\(
X_y:=\bigl(f^{-1}(y)\bigr)_{\mathrm{red}}\),
then the pair
\(
(X,X_y+B)
\)
is log canonical on a neighbourhood of \(\pi^{-1}(z)\).
\end{enumerate}

Then
\[
\overline{\operatorname{NE}}(X/Z;\{z\})
=
\overline{\operatorname{NE}}(X/Z;\{z\})_{K_{\mathcal F}+B\geq0}
+
\sum_i\mathbb R_{\geq0}[\xi_i],
\]
where \(\xi_1,\dots\) are rational curves tangent to \(\mathcal F\)
satisfying
\[
0<-(K_{\mathcal F}+B)\cdot\xi_i
\leq 2\operatorname{rk}(\mathcal F).
\]
\end{lemma}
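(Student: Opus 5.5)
Since every curve \(C\) with \(\pi(C)=z\) satisfies \(f(C)=h(z)=y\), every such curve lies in \(X_y\) and is tangent to \(\mathcal F\). So the cone \(\overline{\operatorname{NE}}(X/Z;\{z\})\) is generated by curves inside the fibre \(X_y\). Over a neighbourhood of \(y\), Proposition~\ref{prop_canonical_comparison} with \(\Sigma_Y=\{y\}\) gives
\[
K_{\mathcal F}+B\sim_{\mathbb R}K_X+X_y+B-f^*(K_Y+\{y\}).
\]
After shrinking \(Z\) (and hence \(Y\)) around \(z\) and \(y\), we may assume \(R(f)\) is supported in \(X_y\). The pullback term \(f^*(K_Y+\{y\})=\pi^*h^*(K_Y+\{y\})\) is \(\pi\)-numerically trivial. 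Hence, on \(N_1(X/Z;\{z\})\), the classes \(K_{\mathcal F}+B\) and \(K_X+X_y+B\) agree. The plan is therefore to apply the relative cone theorem for log canonical pairs in the analytic setting (as in \cite{Fuj22}) to \((X,X_y+B)\) over \(Z\) near \(z\). Here \(X\) is \(\mathbb Q\)-factorial, so \(K_X+X_y+B\) is \(\mathbb R\)-Cartier, and the pair is lc near \(\pi^{-1}(z)\) by hypothesis (3). This yields a decomposition of \(\overline{\operatorname{NE}}(X/Z;\{z\})\) into the \((K_X+X_y+B)\)-nonnegative part plus countably many extremal rays spanned by rational curves \(\xi_i\) with \(0<-(K_X+X_y+B)\cdot\xi_i\le 2\dim X\). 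Each \(\xi_i\) is contracted by \(\pi\), so it is tangent to \(\mathcal F\).

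The remaining point is the sharper bound \(2\operatorname{rk}\mathcal F=2(\dim X-1)\) in place of \(2\dim X\). For this I will not apply the cone theorem on \(X\) directly. Instead I will localise the rays to components of the fibre and use adjunction via Proposition~\ref{prop_lc_fibre_adjunction}. Fix a \((K_{\mathcal F}+B)\)-negative extremal ray \(R\) of \(\overline{\operatorname{NE}}(X/Z;\{z\})\). Since \(X_y\) has finitely many components near the compact set \(\pi^{-1}(z)\), the cone is the sum of the images of the cones of the normalisations \(S_j\to T_j\) of the components \(T_j\) of \(X_y\). It follows that \(R\) is carried by some \(S=S_j\).

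By Proposition~\ref{prop_lc_fibre_adjunction}, whose hypothesis that \((X,\Sigma_X+B)\) is lc holds near \(\pi^{-1}(z)\) with \(\Sigma_X=X_y\), there is an lc pair \((S,B_S)\) with \(n^*(K_{\mathcal F}+B)\sim_{\mathbb R}K_S+B_S\). Then \(S\to\pi(T)\) is projective over \(Z\). Applying the lc cone theorem to \((S,B_S)\) over \(Z\) near \(z\), in dimension \(\dim S=\dim X-1\), the ray \(R\) pulls back to a face of \(\overline{\operatorname{NE}}(S/Z;\{z\})\). This face contains a \((K_S+B_S)\)-negative extremal ray spanned by a rational curve \(\gamma\) with \(0<-(K_S+B_S)\cdot\gamma\le 2(\dim X-1)\). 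Its pushforward \(n_*\gamma\) spans \(R\), since \(R\) is extremal and \(n_*\) maps the face into \(R\). Moreover \(-(K_{\mathcal F}+B)\cdot n_*\gamma=-(K_S+B_S)\cdot\gamma\) by the projection formula, so \(n_*\gamma\) satisfies the required bound.

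Finally, the decomposition itself is obtained as in the standard argument. The cone is the closure of the sum of the \(n_{j*}\) of the cones of the \(S_j\), and each of those cones decomposes by the lc cone theorem on \(S_j\). The negative rays are discrete away from \((K_{\mathcal F}+B)^\perp\) by the usual boundedness argument with an ample divisor. Therefore the closure only adds classes that are \((K_{\mathcal F}+B)\)-nonnegative.

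I expect the main obstacle to be the extremality bookkeeping in the second step. Pulling back an extremal ray of \(X\) to \(S\) gives an extremal face, not necessarily a ray, and it is only rational-polyhedral near the negative side. I also need the relative analytic lc cone theorem of \cite{Fuj22} to apply to the normalisation \(S\), which may be non-\(\mathbb Q\)-factorial but is normal with \(K_S+B_S\) \(\mathbb R\)-Cartier, over the germ at \(z\). If the pullback argument is delicate, the fallback is to take a supporting function \(H\) of \(R\) and apply the cone theorem on \(S\) to the face cut out by \(n^*H\).
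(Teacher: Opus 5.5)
Your proposal is correct and follows essentially the same route as the paper. You restrict to the normalisations \(S_i\) of the components of \(X_y\), use Proposition~\ref{prop_lc_fibre_adjunction} to get lc pairs \((S_i,B_{S_i})\) with \(n_i^*(K_{\mathcal F}+B)\sim_{\mathbb Q}K_{S_i}+B_{S_i}\), apply the analytic lc cone theorem in dimension \(\operatorname{rk}\mathcal F\), and push forward using the projection formula. The paper does not need your extremal-face bookkeeping: it shows with a relatively ample \(H\) that each \(n_{i*}\overline{\operatorname{NE}}(S_i/Z;\{z\})\) is closed, so \(\overline{\operatorname{NE}}(X/Z;\{z\})=\sum_i n_{i*}\overline{\operatorname{NE}}(S_i/Z;\{z\})\), and then it simply sums the decompositions on the \(S_i\), which is exactly your third paragraph.
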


\begin{proof}
By assumption, after possibly shrinking \(Z\) around \(z\), we may
assume that \((X,X_y+B)\) is log canonical. 
In particular, Lemma \ref{l_lc_comparison} implies that \((\mathcal F,B)\) is log canonical. 

Since \(R(f)\) is locally
finite and \(\pi\) is proper, after shrinking \(Z\) further we may
assume that \(\operatorname{Supp}R(f)\) has only finitely many
irreducible components \(P_1,\ldots,P_r\). We may shrink \(Z\) once
more so as to avoid the images under \(\pi\) of those \(P_i\) which do
not meet \(\pi^{-1}(z)\). Every remaining \(P_i\) meets
\(\pi^{-1}(z)\), and, since \(P_i\) is vertical over \(Y\) and
\(f=h\circ\pi\), we have \(f(P_i)=y\).  Thus,
\[
\Supp R(f)\subseteq f^{-1}(y).
\]

By assumption, every  curve \(C\subset X\) such that 
\(\pi(C)=z\) is contained in \(X_y\). Thus, if  
\(T_1,\dots,T_k\) denote the irreducible components of \(X_y\) and 
\(\nu_i\colon S_i\to T_i\) is the normalisation  and 
\(
n_i\colon S_i\longrightarrow X
\)
is the induced morphism, then every  curve in
\(\pi^{-1}(z)\) is dominated by a  curve on \(S_i\), for some \(i=1,\dots,k\).

Fix \(i=1,\dots,k\). We claim that
\(
n_{i*}\overline{\operatorname{NE}}(S_i/Z;\{z\})
\)
is closed. Indeed, let \(H\) be a \(\pi\)-ample Cartier divisor.
Then \(n_i^*H\) is \((\pi\circ n_i)\)-ample. Since
\(N_1(S_i/Z;\{z\})\) is finite-dimensional, for every \(M>0\) the set
\[
\left\{
\alpha\in\overline{\operatorname{NE}}(S_i/Z;\{z\})
\;\middle|\;
(n_i^*H)\cdot\alpha\leq M
\right\}
\]
is compact. The projection formula therefore shows that
\(n_{i*}\overline{\operatorname{NE}}(S_i/Z;\{z\})\) is closed. Since every
curve contracted to \(z\) is dominated by a curve on \(S_i\) for some \(i=1,\dots,k\), it follows that 
\[
\overline{\operatorname{NE}}(X/Z;\{z\})
=\sum_{i=1}^k
n_{i*}\overline{\operatorname{NE}}(S_i/Z;\{z\}).
\]

By  Proposition \ref{prop_lc_fibre_adjunction}, we may write
\[
n_i^*(K_{\mathcal F}+B)\sim_{\mathbb Q} K_{S_i}+B_{S_i},
\]
where \(B_{S_i}\geq0\) and 
\((S_i,B_{S_i})\) is log canonical.

We may therefore apply the analytic cone theorem
\cite[Theorem~1.1.5]{Fuj24} to the projective morphism
\(
\pi\circ n_i\colon S_i\rightarrow Z
\)
over the compact subset \(\{z\}\), for each \(i=1,\dots,k\). 
It follows that the negative extremal
rays are generated by rational curves \(C_{i,1},\dots \) on \( S_i\) satisfying
\[
\overline{\operatorname{NE}}(S_i/Z;\{z\})
=
\overline{\operatorname{NE}}(S_i/Z;\{z\})_{K_{S_i}+B_{S_i}\geq0}
+
\sum \mathbb R_{\geq0}[C_{i,j}]
\]
and
\[
0<-(K_{S_i}+B_{S_i})\cdot C_{i,j}\leq2\dim S_i
=2\operatorname{rk}(\mathcal F).
\]
Let
\(
\xi_{i,j}:=n_i(C_{i,j})\) and 
let \(d_{i,j}\) be the degree of the finite morphism \(C_{i,j}\longrightarrow\xi_{i,j}\). 
Then \(n_{i*}C_{i,j}=d_{i,j}\xi_{i,j}\) and  \(\xi_{i,j}\) is a
rational curve. Moreover, \(f|_{\xi_{i,j}}\) is constant, so \(\xi_{i,j}\) is
tangent to \(\mathcal F\). By the projection formula,
\[
(K_{S_i}+B_{S_i})\cdot C_{i,j}
=
d_{i,j}(K_{\mathcal F}+B)\cdot\xi_{i,j}.
\]
Thus,
\[
0<-(K_{\mathcal F}+B)\cdot\xi_{i,j}
=
\frac{-(K_{S_i}+B_{S_i})\cdot C_{i,j}}{d_{i,j}}
\leq 2\operatorname{rk}(\mathcal F),
\]
and our claim follows.  
\end{proof}

\begin{lemma}
\label{l_local_mmp}
In the setting of Lemma~\ref{l_local_cone}, suppose that \(X\) is klt.

Then every \((K_{\mathcal F}+B)\)-negative extremal ray \(R\) over the
analytic germ \((Z,z)\) admits a contraction \(c_R\colon X\to V\). If the contraction is
small, its flip exists. After every  step of this MMP, all the hypotheses
of Lemma~\ref{l_local_cone} remain satisfied, and the resulting variety
is \(\mathbb Q\)-factorial and klt.
Therefore, every finite sequence of steps of a
\((K_{\mathcal F}+B)\)-MMP over \((Z,z)\) exists.
If \(c_R\) is not birational then it induces a  Mori fibre space.
\end{lemma}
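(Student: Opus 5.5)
The plan is to reduce every step of the \((K_{\mathcal F}+B)\)-MMP over \((Z,z)\) to a step of a \((K_X+X_y+B+\epsilon A)\)-type MMP for an honest log canonical pair, or more simply a \((K_X+\Delta)\)-MMP for a klt pair, and then invoke the analytic MMP of \cite{Fuj22,DHP24,LM23}. The first observation is the formula from Proposition~\ref{prop_canonical_comparison}, used in the proof of Lemma~\ref{l_lc_comparison}: near \(\pi^{-1}(z)\),
\[
K_{\mathcal F}+B\sim_{\mathbb R}K_X+X_y+B-f^*(K_Y+\{y\}),
\]
and since \(f=h\circ\pi\), the divisor \(f^*(K_Y+\{y\})=\pi^*h^*(K_Y+\{y\})\) is \(\pi\)-numerically trivial over a neighbourhood of \(z\). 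Hence \((K_{\mathcal F}+B)\equiv_Z K_X+X_y+B\), and extremal rays, negativity and contractions coincide for the two divisors.

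Next I pass to a klt pair. Since \(X\) is \(\mathbb Q\)-factorial and klt and \((X,X_y+B)\) is lc, for \(0<t\ll1\) the pair \((X,(1-t)(X_y+B))\) is klt. Given a \((K_X+X_y+B)\)-negative extremal ray \(R\), by Lemma~\ref{l_local_cone} it is spanned by a curve \(\xi\) contracted to \(z\), so \(\xi\subset X_y\). Now \(X_y\) is the support of \(f^*y=\pi^*h^*y\), a \(\pi\)-trivial divisor, but \(X_y\) itself need not be; so I instead use \(K_X+(1-t)(X_y+B)+t\,\frac{1}{m}f^*y\)-type corrections. Concretely, write \(f^*y=\sum m_iT_i\) and note \(X_y\le f^*y\); choose \(\Delta:=(1-t)(X_y+B)+s\,f^*y\) with \(s>0\) small so that coefficients stay \(<1\)? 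This fails for \(m_i\ge2\) only if \(s\) is large, so take \(s\) small: then \(K_X+\Delta\equiv_Z(1-t)(K_X+X_y+B)+tK_X\). Rather than fight this, I will use the more robust route: by Fujino's analytic framework for lc pairs (\cite{Fuj22}, and \cite{LM23} for lc flips in the analytic setting), contractions of \((K_X+X_y+B)\)-negative extremal rays over the germ exist, and flips exist, for \(\mathbb Q\)-factorial lc pairs whose underlying space is klt, since perturbing by the ample-over-\(Z\) direction produces a klt pair \((X,\Delta)\) with \(K_X+\Delta\equiv_Z\) a positive multiple of \(K_X+X_y+B+\epsilon H\) negative exactly on \(R\) (standard perturbation: \(R\) is cut out by a supporting divisor \(K_X+X_y+B+L\), \(L\) \(\pi\)-ample, and \((1-t)\) times the boundary plus a general member of a multiple of \(L\) plus small ample gives klt). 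So \(c_R\) and its flip exist by the klt analytic MMP of \cite{DHP24}/\cite{Fuj22}.

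Then I check hypotheses persist. Let \(\phi\colon X\dashrightarrow X^+\) be the divisorial contraction or flip, over \(Z\). Then \(\pi^+\colon X^+\to Z\) is projective, \(f^+:=h\circ\pi^+\) induces \(\phi_*\mathcal F\) (the foliation is given by fibres of \(h\circ\pi\), preserved), so (1),(2) hold. \(\mathbb Q\)-factoriality and klt of \(X^+\) follow from the standard facts for steps of a klt MMP (the step is also \((K_X+\Delta)\)-negative). For (3): the step is \((K_X+X_y+B)\)-negative, and \(X^+_y=\phi_*X_y\) plus possibly nothing new (a divisorial contraction only removes a component; a flip creates no divisors), so by the negativity lemma discrepancies of \((X^+,X^+_y+\phi_*B)\) are \(\ge\) those of \((X,X_y+B)\), hence lc. The main obstacle I anticipate is exactly the existence of contraction and flip in this analytic lc-but-not-klt-boundary setting, i.e. making the perturbation to a klt pair work over a germ; that is where I would rely most heavily on \cite{Fuj22,DHP24,LM23}. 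Finally, if \(c_R\) is not birational, \(\rho(X/V)=1\) over the germ and \(-(K_{\mathcal F}+B)\) is \(c_R\)-ample by construction, giving a Mori fibre space; finite sequences of steps then exist by induction.
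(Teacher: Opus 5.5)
Your proposal follows the paper's proof: you identify $K_{\mathcal F}+B$ with $K_X+X_y+B$ over the germ via Proposition~\ref{prop_canonical_comparison}, realise the contraction and flip as steps for a klt perturbation (using \cite{Fuj24,LM23}), and propagate the hypotheses by the negativity lemma. Your observation that $f^*(K_Y+\{y\})=\pi^*h^*(K_Y+\{y\})$ is trivial over a neighbourhood of $z$ is a slightly cleaner route than the paper's use of the tangent curve $\xi$. The only muddled point is the perturbation paragraph, where the detours through $s\,f^*y$ and the supporting-divisor recipe are unnecessary; as you describe the recipe, it does not obviously make $R$ negative. The pair $\bigl(X,(1-\delta)(X_y+B)\bigr)$ you first wrote down is klt, and since $R$ is a single ray with $(K_X+X_y+B)\cdot R<0$, it is $\bigl(K_X+(1-\delta)(X_y+B)\bigr)$-negative for $0<\delta\ll1$, which is exactly what the paper uses.
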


\begin{proof}
Let \(y=h(z)\) and
\(
X_y:=\bigl(f^{-1}(y)\bigr)_{\mathrm{red}}.
\)
As in the proof of Lemma~\ref{l_local_cone}, after possibly shrinking \(Z\)
around \(z\), we may assume that
\[
\Supp R(f)\subseteq X_y,
\]
and Proposition \ref{prop_canonical_comparison} implies that 
\[
K_{\mathcal F}+B
\sim_{\mathbb Q}
K_X+X_y+B-f^*(K_Y+\{y\}).
\]

Let
\(
R\subset\overline{\operatorname{NE}}(X/Z;\{z\})
\)
be a \((K_{\mathcal F}+B)\)-negative extremal ray. By
Lemma~\ref{l_local_cone}, \(R\) is generated by a rational curve
\(\xi\) tangent to \(\mathcal F\). In particular, 
\(
\xi\subseteq X_y
\) 
and 
\[
(K_{\mathcal F}+B)\cdot\xi
=
(K_X+X_y+B)\cdot\xi.
\]
Thus \(R\) is also a
\((K_X+X_y+B)\)-negative extremal ray.

Since \(X\) is klt and \((X,X_y+B)\) is log canonical, for every
sufficiently small rational number \(\delta>0\), the  pair
\(
\bigl(X,(1-\delta)(X_y+B)\bigr)
\)
is klt. Moreover, if  \(\delta\) is sufficiently small, then \(R\) is 
\(
\bigl(K_X+(1-\delta)(X_y+B)\bigr)
\)-negative. Thus, 
after possibly shrinking \(Z\) around \(z\),
the analytic contraction theorem
(cf. \cite[Theorem~1.1.5]{Fuj24})  gives a contraction
\(
c_R\colon X\longrightarrow V
\)
over \(Z\), such that \(\rho(X/V)=1\).
In particular, the contraction \(c_R\) is also over \(Y\) and
\[
K_{\mathcal F}+B
\sim_{c_R}
K_X+X_y+B.
\]

If \(\dim V<\dim X\), then
\(-\left(K_{\mathcal F}+B\right)\) is \(c_R\)-ample and \(c_R\) is a
\((K_{\mathcal F}+B)\)-Mori fibre space.
Suppose therefore that \(c_R\) is birational. If \(c_R\) is
divisorial, set \(X_1:=V\) and let
\(
\varphi:=c_R\colon X\longrightarrow X_1.
\)
If \(c_R\) is small, the corresponding flip exists by
\cite[Theorem~A]{LM23}, applied to the perturbed klt pair. Therefore,
this is also a \((K_{\mathcal F}+B)\)-flip. In this case, let
\(
\varphi\colon X\dashrightarrow X_1
\)
denote the flip.

Let \(\pi_1\colon X_1\to Z\) be the induced morphism and set
\(
f_1:=h\circ\pi_1\), 
\( \mathcal F_1:=\varphi_*\mathcal F\), and
\(B_1:=\varphi_*B\). 
The morphism \(f_1\) induces \(\mathcal F_1\), and
\(
\pi_1^{-1}(z)\subseteq f_1^{-1}(y)
\).
Set
\(
\Sigma_{X_1}:=
\bigl(f_1^{-1}(y)\bigr)_{\mathrm{red}}.
\)
Since \(\varphi\) is birational over \(Y\) and does not extract any
prime divisor, it follows that \(
R(f_1)=\varphi_*R(f)\) and
\(\Sigma_{X_1}=\varphi_*X_y
\).
Thus, \(
{\Supp}R(f_1)\subseteq\Sigma_{X_1}
\).

Since \(\varphi\) is a
\((K_X+X_y+B)\)-negative divisorial contraction or flip,
\(
(X_1,\Sigma_{X_1}+B_1)
\)
remains log canonical. 
Finally, the transformed perturbed pair remains
klt, and hence \(X_1\) is klt and 
\(\mathbb Q\)-factorial.

Thus all the hypotheses of Lemma~\ref{l_local_cone} are preserved
after every step of the MMP, and the argument may be repeated.
\end{proof}

\begin{theorem}
\label{t_adj_model}
Let $X$ be a normal variety, let \(K\subset X\) be a compact subset, and let $(\mathcal F, \Delta)$ be a foliated pair on $X$ where $\mathcal F$ is of co-rank one
such that $\mathcal F$ admits local holomorphic first integrals and $\Delta\ge 0$ is a $\mathbb Q$-divisor.
Let 
\[
\Delta'\coloneqq\Delta \wedge (\sum_{D\subset \Supp \Delta} \epsilon (D) D ). 
\]

Then,
after replacing \(X\) by an open neighbourhood of \(K\), 
 there exist a rational number $t_0 >0$ 
and  a unique projective
birational morphism $\pi\colon X' \rightarrow X$ up to isomorphism over $X$ and 
as a germ over a neighbourhood of \(K\),
such that 
\begin{enumerate}
\item $X'$ is klt; 
\item $(\mathcal F' \coloneqq \pi_*^{-1}\mathcal F, \Gamma \coloneqq \pi_*^{-1}\Delta'+E_{\noninv})$
is a log canonical foliated pair, where $E$ is the sum of the $\pi$-exceptional divisors; 

\item for $0<t < t_0$ we have $a(F, \mathcal F', \Gamma)+ta(F, X') \geq -\epsilon(F)$
for any divisorial valuation $F$;

\item $K_{\mathcal F'}+\Gamma$ is $\pi$-nef;  

\item $K_{\mathcal F'}+\Gamma +tK_{X'}$ is $\pi$-ample for all $0<t <t_0$; and 
\item we may write
\(
K_{\mathcal F'}+\Gamma+G
=
\pi^*(K_{\mathcal F}+\Delta),
\)
where \(G\geq0\) is a \(\mathbb Q\)-divisor on \(X'\) such that
\(
\pi(\Supp G)
\subseteq
\operatorname{Nlc}(\mathcal F,\Delta)
\).
\end{enumerate}
\end{theorem}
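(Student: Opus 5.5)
The plan is to construct $X'$ locally over $K$ using first integrals, run a relative MMP, and glue the local models by uniqueness.

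\emph{Local step.} Cover $K$ by finitely many Euclidean open sets $U_\alpha$ with first integrals $\Phi_\alpha\colon U_\alpha\to V_\alpha$. Since $\mathcal F$ has co-rank one and $X$ is normal, after shrinking and normalising $V_\alpha$ I will assume $V_\alpha$ is a smooth analytic curve, so $\mathcal F|_{U_\alpha}$ is induced by a holomorphic map $f_\alpha\colon U_\alpha\to Y_\alpha$. Over a relatively compact piece of $U_\alpha$, take a log resolution $\mu\colon W\to U_\alpha$ of $(U_\alpha,\Supp\Delta\cup f_\alpha^{-1}(\Sigma_Y))$ that also makes $f_\alpha\circ\mu$ have simple normal crossing fibres over a finite set $\Sigma_Y$. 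On $W$ consider the boundary $\Theta\coloneqq\mu_*^{-1}\Delta'+E_{\noninv}$ together with the reduced vertical fibre $W_y$ over $\Sigma_Y$. By Proposition~\ref{prop_canonical_comparison},
\[
K_{\mathcal F_W}+\Theta\sim_{\mathbb Q} K_W+\Sigma_W+\Theta-(f\circ\mu)^*(K_Y+\Sigma_Y),
\]
and $(W,\Sigma_W+\Theta)$ is lc and snc. Its coefficients are $\le1$, and vertical components of $\Delta'$ have coefficient $0$ because $\epsilon=0$ on invariant divisors. Hence the hypotheses of Lemmas~\ref{l_local_cone} and \ref{l_local_mmp} hold over each point of $U_\alpha$, with $Z=U_\alpha$ and $h=f_\alpha$.

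\emph{MMP.} I will run a $(K_{\mathcal F_W}+\Theta)$-MMP over $U_\alpha$ with scaling of an ample divisor, using Lemma~\ref{l_local_mmp}. Steps exist and preserve klt, $\mathbb Q$-factoriality and the lc condition. Since each step is also a step of a $(K_W+\Sigma_W+\Theta)$-MMP, termination over the relatively compact base should follow from analytic termination for lc pairs with scaling, as in \cite{Fuj22,DHP24,LM23}. I will arrange this so that the divisors contracted are exactly the exceptional divisors $F$ with $a(F,\mathcal F,\Delta)>-\epsilon(F)$, via the negativity lemma applied to
\[
K_{\mathcal F_W}+\Theta-\mu^*(K_{\mathcal F}+\Delta)=\sum\bigl(a(F)+\epsilon(F)\bigr)F-\bigl(\text{terms along }\mathrm{Nlc}\bigr).
\]
This gives a model $X''$ on which $K_{\mathcal F''}+\Gamma$ is nef over $U_\alpha$, and comparison with the pullback gives (6) with $G\ge0$ supported over $\operatorname{Nlc}(\mathcal F,\Delta)$. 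Property (2) follows from Lemma~\ref{l_lc_comparison}, and (1) is preserved by Lemma~\ref{l_local_mmp}.

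\emph{Ample model and $t$.} Next I pass to the relative model of $K_{\mathcal F''}+\Gamma+tK_{X''}$ for small $t>0$. Since $K_{\mathcal F''}+\Gamma$ is nef and equals $K_{X''}+\Sigma+\Gamma$ up to pullback from $Y$, running a $(K_{\mathcal F''}+\Gamma+tK_{X''})$-MMP over $U_\alpha$ only contracts $(K_{\mathcal F''}+\Gamma)$-trivial rays. Such rays are $K_{X''}$-negative, so this is a klt MMP. By finiteness of models over a compact base, for $t$ small it stabilises to a $t$-independent model $X'$ on which $K_{\mathcal F'}+\Gamma+tK_{X'}$ is ample for $0<t<t_0$, which gives (4) and (5). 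Property (3) follows by writing $a(F,\mathcal F',\Gamma)+ta(F,X')$ as the discrepancy of the convex combination: it is at least $-\epsilon(F)$ once $t<t_0$, because $X'$ is klt and the foliated pair is lc.

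\emph{Uniqueness and gluing; main obstacle.} $X'$ is the relative ample model (relative Proj) of $K_{\mathcal F'}+\Gamma+tK_{X'}$, and its divisors are determined intrinsically by discrepancies. Hence it is unique up to isomorphism over $X$, and the local models glue over the overlaps $U_\alpha\cap U_\beta$ to a neighbourhood of $K$. The finitely many $t_0$'s are combined by taking their minimum. I expect the main obstacle to be termination together with the uniformity of $t_0$. That step requires analytic termination with scaling for lc (not just klt) pairs, and a finiteness-of-models argument in the relative analytic setting. I would first try perturbing to the klt pair $(X,(1-\delta)(\Sigma+\Theta))$ as in Lemma~\ref{l_local_mmp} and applying \cite{DHP24}.
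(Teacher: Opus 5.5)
Your overall architecture matches the paper's: a local first integral, a log resolution, a foliated MMP over the base via Lemma~\ref{l_local_mmp}, then the relative ample model of $K_{\mathcal F}+\Gamma+tK$, uniqueness via relative Proj, and gluing. However, two steps fail as you justify them.

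\emph{Property (3).} It does not follow from ``$X'$ klt and $(\mathcal F',\Gamma)$ lc''. Those two facts only give $a(F,\mathcal F',\Gamma)+t\,a(F,X')>-\epsilon(F)-t$. For instance, at an invariant exceptional $F$ with $a(F,\mathcal F',\Gamma)=0$, (3) forces $a(F,X')\geq 0$, which klt does not provide. Property (3) has to be transported from the log smooth model $W$. There it holds because $a(F,W)\geq 1$ for $W$-exceptional $F$ and $\Theta$ has coefficients $\leq\epsilon$. You then check that every map in the chain $W\dashrightarrow X''\dashrightarrow\cdots\to X'$ is $(K_{\mathcal F}+\Gamma+tK)$-non-positive for all $0<t<t_0$:
the finitely many foliated steps are strictly $(K_{\mathcal F}+\Theta)$-negative, so they stay negative under a small $t$-perturbation, and this is what fixes $t_0$;
the steps of the second MMP are $(K_{\mathcal F}+\Gamma)$-trivial and $K$-negative;
the final contraction is crepant for both divisors.
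This is not cosmetic, because your uniqueness argument, and hence the gluing, needs (3). Take a common resolution $p\colon V\to X_1$, $q\colon V\to X_2$ of two models and set $D_i=K_{\mathcal F_i}+\Gamma_i+tK_{X_i}$. Property (3) lets you write $K_{\mathcal F_V}+\Gamma_V+tK_V=p^*D_1+F_1=q^*D_2+F_2$ with $F_i\geq 0$ exceptional, and this is what identifies the two relative section rings. ``Divisors determined intrinsically by discrepancies'' does not replace this.

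\emph{Uniformity in $t$.} You assert that the $(D+tK)$-MMP, with $D=K_{\mathcal F''}+\Gamma$, contracts only $D$-trivial rays, but nefness of $D$ alone does not give this. Nor does ``finiteness of models'' produce a $t$-independent model. Over the base, $D+tK\sim_{\mathbb Q}(1+t)\bigl(K+\tfrac{1}{1+t}(\Sigma+\Gamma)\bigr)$, and as $t\to 0$ the boundary tends to the lc, non-klt boundary $\Sigma+\Gamma$. So there is no compact set of klt boundaries to which finiteness applies. The missing input is the Cartier-index/length estimate:
if $mD$ is Cartier and $-K\cdot C\leq 2n$ on $K$-negative extremal rays, then for $t<1/(2mn)$ every $(D+tK)$-negative ray is $D$-trivial;
$mD$ descends as a line bundle through each such contraction or flip, so the estimate persists through the MMP;
on the output, the same estimate shows $K+2mnD'$ is nef.
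Hence $D'+tK=(1-2mnt)D'+t(K+2mnD')$ is nef with the same null curves for every $0<t<1/(2mn)$. The basepoint-free theorem then gives one contraction for all such $t$, and comparing two values of $t$ shows that both $K$ and $D'$ descend. This descent is what yields (1), (2) and (5) on $X'$.

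\emph{Termination of the first MMP.} Lc termination with scaling is not available. Perturbing to $(1-\delta)(\Sigma+\Theta)$ changes the MMP rather than proving that the given one terminates. The paper instead uses that this is an MMP over $X$ for a birational morphism and invokes the dlt-modification argument \cite[Theorem~1.8]{Fuj25}.
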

\begin{proof}
We first prove uniqueness as a germ over a neighbourhood of \(K\).
Suppose that we have two models $(X_1, \mathcal F_1, \Gamma_1)$ and $(X_2, \mathcal F_2, \Gamma_2)$
satisfying the conditions of the theorem and let   
\(
\pi_i\colon X_i\longrightarrow U_i\), for 
\(i=1,2\), 
be the induced morphisms, where
\(U_1,U_2\subset X\) are open neighbourhoods of \(K\).
After replacing \(U_1\) and \(U_2\) by a common smaller neighbourhood
of \(K\), we may assume that \(U_1=U_2\), and we denote this common
neighbourhood again by \(X\).
Choose a rational number
\(
0<t<\min\{t_1,t_2\},
\)
where \(t_i\) is the constant associated with \(X_i\).
Let $p\colon W \rightarrow X_1$ and $q\colon W \rightarrow X_2$ be birational morphisms which resolve the indeterminacy locus of the  birational map
$X_1 \dashrightarrow X_2$, and let us write $\Gamma_W = p_*^{-1}\Gamma_1+E^1_{\noninv}$ where 
$E^1$ is the $p$-exceptional divisor. Observe that we also have $\Gamma_W = q_*^{-1}\Gamma_2+E^2_{\noninv}$
where $E^2$ is the $q$-exceptional divisor.  Let us also set $\mathcal G \coloneqq p_*^{-1}\mathcal F_1 = q_*^{-1}\mathcal F_2$.
By item (3) above we have 
\[K_{\mathcal G}+\Gamma_W+tK_W  = p^*(K_{\mathcal F_1}+\Gamma_1+tK_{X_1})+F_1\]
and 
\[K_{\mathcal G}+\Gamma_W+tK_W = q^*(K_{\mathcal F_2}+\Gamma_2+tK_{X_2})+F_2\]
where $F_1,F_2\geq0$ are respectively $p$-exceptional and
$q$-exceptional. Set
\[
D_i:=K_{\mathcal F_i}+\Gamma_i+tK_{X_i},
\qquad i=1,2.
\]
After shrinking $X$ around $K$ if necessary, choose a sufficiently
divisible positive integer $r$ such that $rD_i$ is Cartier for
$i=1,2$, and define the graded $\mathcal O_X$-algebras
\[
\mathcal R_i:=\bigoplus_{m\geq0}
(\pi_i)_*\mathcal O_{X_i}(mrD_i).
\]
Since $F_1$ and $F_2$ are effective and exceptional over the
respective models, the above equalities induce a natural
isomorphism
\( \mathcal R_1\simeq\mathcal R_2 \).
On the other hand, item~(5) implies that $D_i$ is $\pi_i$-ample,
and hence
\(X_i\simeq\operatorname{Proj}_X\mathcal R_i\).
Consequently, $X_1$ and $X_2$ are isomorphic over $X$,
which proves uniqueness.

\medskip

We now prove existence over a neighbourhood of a closed point  \(x\in K\).
 After shrinking \(X\) around \(x\), we may assume that \(\mathcal F\)
is induced by a holomorphic first integral
\(
\Phi\colon X\longrightarrow Y
\),
where \(Y\) is a smooth analytic curve. 
Set
\(
y:=\Phi(x)\) and 
\(
X_y:=\bigl(\Phi^{-1}(y)\bigr)_{\mathrm{red}}
\).
Let
\(
\mu\colon\overline X\longrightarrow X
\)
be a log resolution of \((X,\Delta+X_y)\), and set
\(
\overline{\mathcal F}:=\mu^{-1}\mathcal F\), 
\(
\overline\Phi:=\Phi\circ\mu
\), 
\(
\overline X_y:=
\bigl(\overline\Phi^{-1}(y)\bigr)_{\mathrm{red}}
\), 
and 
\(
\overline\Delta
:=
\mu_*^{-1}\Delta'+E_{\mathrm{non\text{-}inv}},
\)
where \(E\) is the reduced \(\mu\)-exceptional divisor. 
Since 
\(
(\overline X,
{\overline X}_y
+\overline\Delta)
\) is log smooth,  after possibly shrinking $X$ around $x$ further, by Lemma \ref{l_lc_comparison} it follows  that 
\(
(\overline{\mathcal F},\overline\Delta)\) is log canonical.

Thus, Lemma \ref{l_local_mmp} implies that 
every contraction and flip occurring in a
\((K_{\overline{\mathcal F}}+\overline\Delta)\)-MMP over the germ
\((X,x)\) exists. 
By Proposition~\ref{prop_canonical_comparison}, this MMP is also a
\(
(K_{\overline X}+\overline X_y+\overline\Delta)
\)-MMP over \(X\). The argument in the proof of the existence of a dlt modification 
(cf. \cite[Theorem 1.8]{Fuj25})
implies 
that the MMP terminates with a model \(\widetilde X\) such that
\(
K_{\widetilde{\mathcal F}}+\widetilde\Delta
\)
is nef over \(X\), where $\psi\colon \overline{X} \dashrightarrow \tilde{X}$ is the induced map, 
$\tilde{\mathcal F} \coloneqq \psi_*\overline{\mathcal F}$ and $\tilde{\Delta} \coloneqq \psi_*\overline{\Delta}$. 

Set
\(
\overline\Theta
:=\overline X_y
+\overline\Delta\)
and 
\(\widetilde\Theta:=\psi_*\overline\Theta.
\)
By construction and by  Lemma 
\ref{l_local_mmp}, we have that 
\(\widetilde X\) is \(\mathbb Q\)-factorial and klt,
\((\widetilde X,\widetilde\Theta)\) is log canonical, and
\[
D
:=
K_{\widetilde{\mathcal F}}+\widetilde\Delta
\sim_{\mathbb Q,X}
K_{\widetilde X}+\widetilde\Theta
\]
is nef over \(X\).

\medskip 

Let \(n:=\dim X\), and choose \(m>0\) such that \(mD\) is Cartier.
Fix a rational number
\(
0<\delta<\frac{1}{2mn+1}.
\)
Since \(\widetilde X\) is klt, we have that the pair
\(
\bigl(\widetilde X,(1-\delta)\widetilde\Theta\bigr)
\)
is klt. Set
\(
L_\delta
:=
K_{\widetilde X}+(1-\delta)\widetilde\Theta
\).
Then
\[
L_\delta
\sim_{\mathbb Q,X}
(1-\delta)D+\delta K_{\widetilde X}.
\]

We run an \(L_\delta\)-MMP over \(X\) with scaling of a sufficiently
positive relatively ample \(\mathbb Q\)-divisor. We claim that every
extremal ray contracted by this MMP is \(D\)-trivial.
Indeed, suppose that
\(
R\subseteq
\overline{\operatorname{NE}}(\widetilde X/X;\{x\})
\)
is an \(L_\delta\)-negative extremal ray and assume that $R$ is $D$-positive.
Then there exists a 
rational curve \(C\) spanning \(R\) such that
\(
0<-K_{\widetilde X}\cdot C\leq 2n.
\)
Since \(mD\) is Cartier and \(D\cdot C>0\), we have
\(
D\cdot C\geq\frac{1}{m}.
\)
Thus,
\[
L_\delta\cdot C\geq
\frac{1-\delta}{m}-2n\delta
>0,
\]
a contradiction.  
Thus, every step of this MMP is a \(D\)-trivial step of a
\(K_{\widetilde X}\)-MMP.

We now show that at each step $X_{i}\dashrightarrow X_{i+1}$ of this MMP, if $D_i$ is the strict transform of $D$ on $X_i$ then $mD_i$ is still Cartier for each $i$. 
Suppose that
\(
c_i\colon X_i\longrightarrow V_i
\)
is the corresponding contraction associated to the extremal ray $R_i$. 
By induction, \(mD_i\) is Cartier and $R_i$ is $D_i$-trivial. 
 Therefore
\cite[Theorem 1.1.5]{Fuj24} implies that there exists a line bundle \(\mathcal M_i\) on \(V_i\) such
that
\(
\mathcal O_{X_i}(mD_i)
\simeq
c_i^*\mathcal M_i.
\)
If \(c_i\) is divisorial, this shows that \(mD_{i+1}\) is Cartier on
\(V_i=X_{i+1}\). If \(c_i\) is small and
\(
c_i^+\colon X_{i+1}\longrightarrow V_i
\)
is its flip, then
\(
\mathcal O_{X_{i+1}}(mD_{i+1})
\simeq
(c_i^+)^*\mathcal M_i.
\)
Thus \(mD_i\) remains Cartier and \(D_i\) remains nef at every step, and we can repeat the argument above. 

Since 
\(L_\delta\) is big over \(X\), 
\cite[Theorem A]{LM23} implies that this
\(L_\delta\)-MMP with scaling terminates.
Denote the resulting MMP by
\(
\phi\colon\widetilde X\dashrightarrow\widetilde X',
\)
let \(g'\colon\widetilde X'\to X\) be the structure morphism, and set
\(
\widetilde{\mathcal F}'
:=
\phi_*\widetilde{\mathcal F},\)
\(
\widetilde\Delta'
:=
\phi_*\widetilde\Delta\),
\(
\widetilde\Theta'
:=
\phi_*\widetilde\Theta\), and 
\(
D'
:=
K_{\widetilde{\mathcal F}'}+\widetilde\Delta'.
\)
Then \(D'\) and
\(
L_\delta'
:=
K_{\widetilde X'}+(1-\delta)\widetilde\Theta'
\)
are nef over \(X\), \(mD'\) is Cartier, and
\[
L_\delta'
\sim_{\mathbb Q,X}
(1-\delta)D'+\delta K_{\widetilde X'}.
\]
Thus, if \(C\) is a curve in $\widetilde X'$ contracted over \(X\) and such that
\(D'\cdot C=0\), then
\(
0
\leq
L_\delta'\cdot C
=
\delta K_{\widetilde X'}\cdot C.
\)
Therefore
\(
K_{\widetilde X'}\cdot C\geq0.
\)

\begin{claim}\label{claim}
For any $0<t< \frac{1}{2m n}$ we have that $K_{\tilde{\mathcal F}'}+\tilde{\Delta}'+tK_{\tilde{X}'}$ is nef over $X$.
Moreover, if $C$ is any curve contracted over $X$ and such that $(K_{\tilde{\mathcal F}'}+\tilde{\Delta}'+tK_{\tilde{X}'})\cdot C = 0$
then $(K_{\tilde{\mathcal F}'}+\tilde{\Delta}')\cdot C = K_{\tilde{X}'}\cdot C = 0$.
\end{claim}

\begin{proof}[Proof of the Claim]
We first claim that
\(
L:=K_{\widetilde X'}+2mnD'
\)
is nef over \(X\). Since \(\widetilde X'\) is klt, we may apply the
relative cone theorem. Let \(R\) be a \(K_{\widetilde X'}\)-negative extremal ray.  Then  \(R\) is generated by a rational curve
\(\xi\) satisfying
\[
0<-K_{\widetilde X'}\cdot\xi\leq 2n.
\]
We want to show that \(D'\cdot\xi>0\). Indeed, if \(D'\cdot\xi=0\), then, 
by construction, we have that 
\(
K_{\widetilde X'}\cdot\xi\geq0,
\)
contradicting the fact that \(R\) is \(K_{\widetilde X'}\)-negative.
Since \(mD'\) is Cartier, it follows that
\(
D'\cdot\xi\geq\frac{1}{m}
\).
Thus,
\[
L\cdot\xi
=
K_{\widetilde X'}\cdot\xi+2mnD'\cdot\xi
\geq
-2n+2mn\frac{1}{m}
=0.
\]
The relative cone theorem therefore implies that \(L\) is nef over
\(X\).

For every
\(
0<t<\frac{1}{2mn}
\),
we can now write
\[
D'+tK_{\widetilde X'}
=
(1-2mnt)D'+tL.
\]
Thus, \(
D'+tK_{\widetilde X'}
\)
is nef over \(X\).

Finally, suppose that \(C\) is a curve contracted over \(X\) such that
\[
(D'+tK_{\widetilde X'})\cdot C=0.
\]
Then \(
((1-2mnt)D'+tL)\cdot C=0\) and, in particular, 
\(
D'\cdot C=L\cdot C=0
\).
Thus, 
\(
K_{\widetilde X'}\cdot C=0
\)
and
\[
\bigl(K_{\widetilde{\mathcal F}'}+\widetilde{\Delta}'\bigr)\cdot C
=
K_{\widetilde X'}\cdot C
=
0,
\]
as required.
\end{proof}

\medskip

Set
\(
\tau:=\frac{1}{2mn}
\) and
for every rational number \(t\) such that \(0<t<\tau\), set
\(
M_t:=D'+tK_{\tilde X'}.
\)
By Claim~\ref{claim}, \(M_t\) is nef over \(X\).
 Since \(\widetilde X'\) is klt, the
analytic base-point-free theorem
\cite[Theorem~8.1]{Fuj22} implies that \(M_t\) is semi-ample over \(X\).
By Claim~\ref{claim}, for every rational \(0<t<\tau\),
the divisors \(M_t\) define the same contraction
\(
p\colon\widetilde X'\longrightarrow X'
\)
over \(X\). Let \(\pi\colon X'\to X\) be the induced morphism. 

Choose two distinct rational numbers \(t_1,t_2\in(0,\tau)\), and write
\(
M_{t_i}\sim_{\mathbb Q}p^*A_i,
\)
where \(A_i\) is \(\pi\)-ample. Since
\[
K_{\widetilde X'}
=
\frac{M_{t_2}-M_{t_1}}{t_2-t_1},
\qquad
D'
=
\frac{t_2M_{t_1}-t_1M_{t_2}}{t_2-t_1},
\]
both \(K_{\widetilde X'}\) and \(D'\) descend through \(p\). Let
\(
\mathcal F'
\) be the foliation induced on \(X'\)
and let
\(
\Gamma:=p_*\widetilde\Delta'
\). Then \(
K_{\widetilde X'}\sim_{\mathbb Q}p^*K_{X'}
\),
and
\(
D'\sim_{\mathbb Q}p^*(K_{\mathcal F'}+\Gamma).
\)
Consequently, \(X'\) is klt and \((\mathcal F',\Gamma)\) is log
canonical. Moreover,
\(
\Gamma=\pi_*^{-1}\Delta'+E_{\mathrm{non\text{-}inv}},
\)
where \(E\) is the reduced \(\pi\)-exceptional divisor. This proves
Properties~{\rm (1)} and {\rm (2)}.

For every rational \(0<t<\tau\), the divisor
\(
K_{\mathcal F'}+\Gamma+tK_{X'}
\)
is \(\pi\)-ample, and, in particular, \(K_{\mathcal F'}+\Gamma\) is
\(\pi\)-nef. This proves Properties~{\rm (4)} and {\rm (5)}.

Since \(\psi\) consists of finitely many
\((K_{\overline{\mathcal F}}+\overline\Delta)\)-negative steps, we
may choose a rational number
\(
0<t_0<\tau
\)
such that every step of \(\psi\) remains
\(
(K_{\overline{\mathcal F}}+\overline\Delta+tK_{\overline X})
\)-
negative for every \(0<t<t_0\). The steps of \(\phi\) are
\((K_{\widetilde{\mathcal F}}+\widetilde\Delta)\)-trivial and
\(K_{\widetilde X}\)-negative, and hence are negative with respect to
\(
K_{\widetilde{\mathcal F}}+\widetilde\Delta+tK_{\widetilde X}
\)
for every \(t>0\). On the other hand, \(p\) is crepant with respect
to both \(K_{\widetilde X'}\) and
\(K_{\widetilde{\mathcal F}'}+\widetilde\Delta'\), and is therefore
trivial with respect to their sum for every \(t>0\).
Thus the rational map
\(\overline X\dashrightarrow X'\) is
\((
K_{\overline{\mathcal F}}+\overline\Delta+tK_{\overline X})
\)-
non-positive for every \(0<t<t_0\). Since
\[
a(F,\overline{\mathcal F},\overline\Delta)
+t\,a(F,\overline X)\geq-\epsilon(F),
\]
discrepancies do not decrease under this map, and Property~{\rm (3)}
follows.
Property (6) is a direct consequence of (4) and the Negativity Lemma.

\medskip 

Thus, for every $x\in K$, 
we have  obtained an open
neighbourhood \(U_x\) of \(x\), a projective birational morphism
\(
\pi_x\colon X'_x\longrightarrow U_x,
\)
and a rational number \(t_x>0\) such that Properties
\textup{(1)--(6)} hold over \(U_x\) for every \(0<t<t_x\).
We now globalise the construction over \(K\).
For every \(x\in K\), choose open neighbourhoods
\(
V_x\subset U_x
\) of $x$ 
such that \(\overline{V_x}\) is compact and contained in \(U_x\).
By compactness, there exist points \(x_1,\ldots,x_r\in K\) such that
\[
K\subseteq V_{x_1}\cup\cdots\cup V_{x_r}.
\]
Choose a rational number
\[
0<t_0<\min_{1\leq i\leq r}t_{x_i}.
\]

The uniqueness proved above gives a unique isomorphism
\[
X'_{x_i}|_{V_{x_i}\cap V_{x_j}}
\simeq
X'_{x_j}|_{V_{x_i}\cap V_{x_j}}
\]
over \(V_{x_i}\cap V_{x_j}\).  Thus, the local models glue
to a normal analytic variety \(X'\) and a proper birational morphism
\(
\pi\colon X'\longrightarrow V,
\) where
\(V:=V_{x_1}\cup\cdots\cup V_{x_r}.
\)

It remains to verify that \(\pi\) is projective.  Fix a rational number
\(0<t<t_0\) and set
\[
M_{t}:=K_{\mathcal F'}+\Gamma+tK_{X'}.
\]
For each $i$, let \(\pi_{x_i}\colon X'_{x_i}\longrightarrow U_{x_i}\)
be the original local model, and let $M_{t,i}$ denote the
corresponding $\mathbb Q$-Cartier divisor on $X'_{x_i}$.
Since $\overline{V_{x_i}}$ is compact and contained in $U_{x_i}$,
and $\pi_{x_i}$ is proper, the set
\(
\pi_{x_i}^{-1}\bigl(\overline{V_{x_i}}\bigr)
\)
is compact. Hence there exists an integer $q_i>0$ such that
$q_iM_{t,i}$ is Cartier on a neighbourhood of this set.

Choose a common multiple $q>0$ of the integers $q_1,\ldots,q_r$.
Under the identifications used to glue the local models,
$qM_t$ is Cartier on each $\pi^{-1}(V_{x_i})$, and therefore
on all of $X'$.
Moreover, by the local construction,
\[
\mathcal O_{X'}(qM_t)|_{\pi^{-1}(V_{x_i})}
\]
is ample relative to $\pi^{-1}(V_{x_i})\to V_{x_i}$ for every $i$.
Since relative ampleness is local on the base,
$\mathcal O_{X'}(qM_t)$ is $\pi$-ample.
Thus $\pi$ is projective.

Properties \textup{(1), (2), (4)} and \textup{(6)} are local on the
base.  Properties \textup{(3)} and \textup{(5)} hold on every
\(V_{x_i}\) for all \(0<t<t_0\), by the choice of \(t_0\).
They therefore hold over \(V\).  Replacing \(X\) by \(V\), the theorem follows.
\end{proof}

\section{Cone and Contraction theorems}

\subsection{Cone theorem for foliations admitting local first integrals}

\begin{lemma}
\label{l_only_lc_extracted}
Let \(X\) be a normal projective variety, let \(\mathcal F\) be a
co-rank one foliation on \(X\) which admits local holomorphic first
integrals, and let \(B,B'\geq0\) be effective \(\mathbb Q\)-divisors.
Assume that \(K_{\mathcal F}+B\) and  \(B'\) are
\(\mathbb Q\)-Cartier. For every sufficiently small rational number
\(t>0\), let
\(
\mu_t\colon Y_t\longrightarrow X
\)
be the modification of \((\mathcal F,B+tB')\) whose existence is
guaranteed by Theorem~\ref{t_adj_model}.

Then there exists a constant \(\alpha>0\) such that, for every rational
number \(0<t<\alpha\), if \(E\) is a \(\mu_t\)-exceptional prime
divisor, then
\[
a(E,\mathcal F,B)\leq-\epsilon(E).
\]
\end{lemma}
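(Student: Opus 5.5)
The key tool is property (6) of Theorem~\ref{t_adj_model}, applied to \((\mathcal F,B+tB')\), combined with the observation that log canonicity of \((\mathcal F',\Gamma_t)\) on \(Y_t\) forces exceptional divisors to have small discrepancy with respect to \((\mathcal F,B+tB')\). The constant \(\alpha\) will come from a finiteness statement for the set of divisors that can be extracted.

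\emph{Step 1: exceptional divisors have low discrepancy.} Write \(\Delta_t:=B+tB'\) and \(\Gamma_t:=\mu_{t*}^{-1}\Delta_t'+E_{\noninv}\). Property (6) gives
\[
K_{\mathcal F_t}+\Gamma_t+G_t=\mu_t^*(K_{\mathcal F}+\Delta_t)
\]
with \(G_t\geq0\). For an exceptional prime \(E\), its coefficient in \(\Gamma_t\) is \(\epsilon(E)\), so
\[
a(E,\mathcal F,\Delta_t)=-\epsilon(E)-m_EG_t\leq-\epsilon(E).
\]
Since \(B'\) is \(\mathbb Q\)-Cartier and effective, we have
\[
a(E,\mathcal F,B)=a(E,\mathcal F,\Delta_t)+t\,\operatorname{mult}_E\mu_t^*B'.
\]
The statement therefore reduces to showing that \(t\,\operatorname{mult}_E\mu_t^*B'\) does not push the discrepancy above \(-\epsilon(E)\). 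Small \(t\) alone does not settle this; the extra term must be controlled more carefully.

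\emph{Step 2: the main obstacle is uniformity in \(t\).} Fix a log resolution \(\mu\colon\overline X\to X\) of \((X,B+B'+\text{fibres})\), constructed locally via first integrals and globalised using compactness of \(X\). The discrepancies \(a(F,\mathcal F,B+tB')\) of all divisors \(F\) over \(X\) are controlled by this log smooth model. Outside the non-lc locus, the divisors with \(a(F,\mathcal F,B+tB')\leq-\epsilon(F)\) have the following properties for \(t\) small:
\begin{enumerate}
\item there are only finitely many such divisors on \(\overline X\);
\item any divisor \(F\) with \(a(F,\mathcal F,B)>-\epsilon(F)\) satisfies \(a(F,\mathcal F,B)\geq-\epsilon(F)+c\) for a uniform \(c>0\), by the log smooth computation (for lc places, discrepancies are piecewise linear in the coefficients);
\item \(\operatorname{mult}_F\mu^*B'\) is bounded linearly in terms of \(a(F,\mathcal F,B)+\epsilon(F)\) plus a constant.
\end{enumerate}
This is the standard argument that the lc places of \((\mathcal F,B+tB')\) for small \(t\) are contained in those of \((\mathcal F,B)\). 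Concretely, on the log smooth model the function \(t\mapsto a(F,\mathcal F,B+tB')\) is linear in \(t\) with slope \(-\operatorname{mult}_F B'\). It is bounded below by weighted combinations of the coefficients, so it crosses \(-\epsilon(F)\) for some \(t<\alpha\) only if it was already \(\leq-\epsilon(F)\) at \(t=0\). On a log smooth (foliated, via Lemma~\ref{l_lc_comparison} and Proposition~\ref{prop_canonical_comparison}) model, discrepancies of toroidal valuations are linear functions of the weights. Choosing \(\alpha\) smaller than the minimum of the finitely many ratios (coefficient gap)/(coefficient of \(B'\)) over the strata gives the required uniformity.

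\emph{Step 3: non-lc centres and conclusion.} Divisors whose centre lies in \(\operatorname{Nlc}(\mathcal F,B)\) need separate treatment. For divisors over non-lc centres the inequality is automatic once \(t\) is small: near such a centre some stratum already has discrepancy strictly below \(-\epsilon\) at \(t=0\), and this persists. Combining Step 1 with the uniform bound from Step 2 yields \(a(E,\mathcal F,B)\leq-\epsilon(E)\) for every \(\mu_t\)-exceptional \(E\) when \(0<t<\alpha\). The key difficulty is Step 2, that is, making the threshold independent of \(E\). I would try first to reduce it to the toroidal linearity computation on a single fixed log resolution.
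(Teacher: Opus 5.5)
\textbf{There is a genuine gap in Step~2.} Your Step~1 is correct and is exactly the identity the paper uses: for a \(\mu_t\)-exceptional \(E\), Property~(6) of Theorem~\ref{t_adj_model} gives \(a(E,\mathcal F,B)-t\operatorname{ord}_E B'=a(E,\mathcal F,B+tB')\leq-\epsilon(E)\).

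In Step~2 you try to make \(\alpha\) work for \emph{all} divisorial valuations. That is, you want to show that for small \(t\) every \(F\) with \(a(F,\mathcal F,B+tB')\leq-\epsilon(F)\) already satisfies \(a(F,\mathcal F,B)\leq-\epsilon(F)\). Your item~(3), that \(\operatorname{ord}_F B'\) is controlled by the gap \(a(F,\mathcal F,B)+\epsilon(F)\), is what is supposed to make this work. This fails whenever \(\Supp B'\) contains an lc centre of \((\mathcal F,B)\). That is precisely how the lemma is used in Theorem~\ref{t_cone}, with \(B=\Theta_0\), \(B'=G\), and \(W_0\subseteq G\) an lc centre of \((\mathcal F,\Theta_0)\).

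Here is a counterexample. Let \(\mathcal F\) be the foliation on \(\mathbb C^2\) with first integral \(y\), let \(B=B'=\{x=0\}\), and let \(p=(0,c)\). Let \(E_k\) be the divisor over \(p\) which is monomial with weights \((k,1)\) in \((x,y-c)\). Let \(F\) be the exceptional divisor of the blow-up of a general point of \(E_k\). Then:
\begin{itemize}
\item \(F\) lies over \(p\), so it is invariant and \(\epsilon(F)=0\);
\item \(a(F,X,X_c+B)=(k+1)-1-k=0\), so the formula \(a(F,\mathcal F,B)=a(F,X,X_c+B)+1-\epsilon(F)\) from the proof of Lemma~\ref{l_lc_comparison} gives \(a(F,\mathcal F,B)=1\);
\item \(\operatorname{ord}_F B'=k\).
\end{itemize}
Hence \(a(F,\mathcal F,B+tB')=1-tk\leq0=-\epsilon(F)\) as soon as \(k\geq1/t\), for every \(t>0\). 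So no uniform \(\alpha\) exists over all valuations, and no log smooth or toroidal bookkeeping can produce one. Step~3 is also wrong as stated: a divisor whose centre lies in \(\operatorname{Nlc}(\mathcal F,B)\) need not itself have discrepancy \(\leq-\epsilon\).

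\textbf{The missing idea} is the one your opening sentence announced but never supplied. Only finitely many valuations, independent of \(t\), can ever be \(\mu_t\)-exceptional. This comes from the construction in Theorem~\ref{t_adj_model}, not from discrepancy estimates.
\begin{itemize}
\item For \(t>0\) the support of \(B+tB'\) is \(\Supp B\cup\Supp B'\). So on each of finitely many charts you can start the construction from the same log resolution \(\overline X\to X\) for all small \(t\).
\item The resulting model is reached from \(\overline X\) by two MMPs followed by a birational morphism, hence by a birational contraction. By uniqueness it is \(Y_t\).
\item Therefore every \(\mu_t\)-exceptional divisor is one of finitely many valuations \(E_1,\dots,E_r\). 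The paper phrases this as \(Y_s\dashrightarrow Y_t\) being a birational contraction over \(X\) for \(t<s\).
\end{itemize}
Set \(a_i:=a(E_i,\mathcal F,B)\) and \(b_i:=\operatorname{ord}_{E_i}B'\). Choose \(\alpha\) smaller than \((a_i+\epsilon(E_i))/b_i\) for the finitely many \(i\) with \(a_i>-\epsilon(E_i)\) and \(b_i>0\). Your Step~1 inequality \(a_i-tb_i\leq-\epsilon(E_i)\) then excludes \(a_i>-\epsilon(E_i)\): directly if \(b_i=0\), and by the choice of \(\alpha\) if \(b_i>0\). The divisors in the counterexample above never occur on \(Y_t\) precisely because they are not in this finite list.
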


\begin{proof}
Fix a  sufficiently small rational number \(s>0\). By the construction in the proof of Theorem~\ref{t_adj_model}, whenever
\(0<t<s\), there is a birational contraction
\(
Y_{s}\dashrightarrow Y_t
\)
over \(X\). Thus, 
we may assume that 
there are finitely many divisorial valuations 
\(
E_1,\ldots,E_r
\)
such that every \(\mu_t\)-exceptional prime divisor is represented by
one of the \(E_i\).

Set
\[
a_i:=a(E_i,\mathcal F,B)
\qquad\text{and}\qquad
b_i:=\operatorname{ord}_{E_i}(B').
\]
Since \(B'\geq0\) is \(\mathbb Q\)-Cartier, we have \(b_i\geq0\).
Let
\[
I:=
\left\{
i\ \middle|\
a_i>-\epsilon(E_i)\ \text{and}\ b_i>0
\right\}.
\]
If \(I\) 
is empty, set \(\alpha:=s\). Otherwise choose
\[
0<\alpha<
\min\left\{
s,
\frac{a_i+\epsilon(E_i)}{b_i}
\ \middle|\
i\in I
\right\}.
\]

Let \(0<t<\alpha\), and let \(E\) be a
\(\mu_t\)-exceptional prime divisor. Then \(E\) is one of the
valuations \(E_i\). By Property~{\rm (6)} of
Theorem~\ref{t_adj_model}, we may write
\[
K_{\mathcal F_{Y_t}}+\Gamma_t+G_t
=
\mu_t^*(K_{\mathcal F}+B+tB'),
\]
for some \(
G_t\geq0\).
Since the coefficient of \(E\) in \(\Gamma_t\) is
\(\epsilon(E)\), we have 
\[
a_i-tb_i= a(E_i,\mathcal F,B+tB')
\leq-\epsilon(E_i).
\]

Suppose that \(a_i>-\epsilon(E_i)\). Then  \(b_i>0\), and hence \(i\in I\). On the other hand, by the choice of
\(\alpha\), we have 
\[
a_i-tb_i>-\epsilon(E_i),
\]
which is a contradiction. Therefore
\[
a(E_i,\mathcal F,B)=a_i\leq-\epsilon(E_i),
\]
as required.
\end{proof}

\begin{theorem}
\label{t_cone}
Let $X$ be a normal projective variety, let $\mathcal F$ be a
co-rank one foliation on $X$ which admits local holomorphic first integrals, and
let $\Delta\geq0$ be a $\mathbb Q$-divisor such that
$K_{\mathcal F}+\Delta$ is $\mathbb Q$-Cartier.

Then we may write
\[
\overline{\operatorname{NE}}(X)
=
\overline{\operatorname{NE}}(X)_{K_{\mathcal F}+\Delta\geq0}
+Z_{-\infty}
+\sum_i\mathbb R_{\geq0}[\xi_i],
\]
where each $\xi_i$ is a rational curve tangent to $\mathcal F$
satisfying
\[
0<-(K_{\mathcal F}+\Delta)\cdot\xi_i\leq2\dim X,
\]
and $Z_{-\infty}$ is the closed cone generated by the classes
of curves contained in a non-log canonical centre of
$(\mathcal F,\Delta)$.
\end{theorem}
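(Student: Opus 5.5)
The plan is to follow the strategy of \cite{ACSS}: pass to the modification of Theorem~\ref{t_adj_model}, where the local structure is controlled, prove a cone theorem there, and push it forward to $X$.

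\textbf{Step 1: the model.} Apply Theorem~\ref{t_adj_model} with $K=X$ (so $X$ is compact and no shrinking is needed). This gives a projective birational morphism $\pi\colon X'\to X$ with the following properties. First, $X'$ is klt. Second, $(\mathcal F',\Gamma)$ is log canonical. Third,
\[
K_{\mathcal F'}+\Gamma+G=\pi^*(K_{\mathcal F}+\Delta),
\]
where $G\ge0$ and $\pi(\Supp G)\subseteq\operatorname{Nlc}(\mathcal F,\Delta)$. Fourth, $K_{\mathcal F'}+\Gamma+tK_{X'}$ is $\pi$-ample for small $t>0$.

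If $\Delta$ is not $\mathbb Q$-Cartier, I first perturb. Choose an ample $A$, and note that $\pi^*(K_{\mathcal F}+\Delta)$ is still defined. Lemma~\ref{l_only_lc_extracted} guarantees that the exceptional divisors extracted are only lc-type places, so that $\pi$ is an isomorphism over the locus where $(\mathcal F,\Delta)$ is lc and has no lc-place centres. This is where $Z_{-\infty}$ enters: any curve $C\subset X$ not contained in $\operatorname{Nlc}$ has a strict transform $C'$. Moreover,
\[
\pi_*\overline{\operatorname{NE}}(X')=\overline{\operatorname{NE}}(X),
\]
and any class of $\overline{\operatorname{NE}}(X')$ with $G\cdot\xi<0$ is supported on $\Supp G$, whose image lies in $\operatorname{Nlc}$. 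Hence it suffices to prove
\[
\overline{\operatorname{NE}}(X')=\overline{\operatorname{NE}}(X')_{K_{\mathcal F'}+\Gamma\ge0}+\overline{\operatorname{NE}}(X'/X)+\sum\mathbb R_{\ge0}[\xi_i'],
\]
and then push forward. Curves contracted by $\pi$ land in $0$. Curves meeting $\Supp G$ positively pushed to $\operatorname{Nlc}$ give $Z_{-\infty}$. On curves not in $\Supp G$, we have $G\cdot\xi\ge0$, so $(K_{\mathcal F'}+\Gamma)$-negativity is weaker than $(K_{\mathcal F}+\Delta)$-negativity after pushforward, and the length bound $2\dim X$ is preserved by the projection formula.

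\textbf{Step 2: the cone theorem on $X'$.} Let $R$ be a $(K_{\mathcal F'}+\Gamma)$-negative extremal ray. Because $X'$ is klt, the usual cone theorem applies to $K_{X'}+\Gamma'$-type perturbations. I would argue as follows. Since $K_{\mathcal F'}+\Gamma+tK_{X'}$ is $\pi$-ample, I take $H$ ample on $X$ and form
\[
N=K_{\mathcal F'}+\Gamma+tK_{X'}+\pi^*H,
\]
which is ample. Writing $K_{\mathcal F'}+\Gamma=\tfrac1{1+t}(N-\pi^*H)-\tfrac{t}{1+t}(K_{\mathcal F'}+\Gamma)\ldots$ is messy. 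The cleaner route is to reduce to the locus carrying $R$. By a standard argument (boundedness and bend-and-break, as in \cite{ACSS}), I find a minimal lc-type centre $W$ of $(\mathcal F',\Gamma)$, or a general leaf closure, that carries $R$.

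\textbf{Step 3: the tangent case via first integrals.} The key point is that $\mathcal F'$ still has local first integrals $f$ (the composition with $\pi$ of those on $X$). By Proposition~\ref{prop_canonical_comparison}, locally
\[
K_{\mathcal F'}+\Gamma\sim K_{X'}+\Sigma_{X'}+\Gamma-f^*(\cdots),
\]
and by property (3) together with Lemma~\ref{l_lc_comparison}-type computations, $(X',\Sigma_{X'}+\Gamma)$ is lc near fibres. If $R$ is carried by a closed subvariety tangent to $\mathcal F'$, then the argument of Lemma~\ref{l_local_cone} applies: normalise a component $S$ of a fibre, use adjunction (Proposition~\ref{prop_lc_fibre_adjunction}) to get an lc pair $(S,B_S)$ with $n^*(K_{\mathcal F'}+\Gamma)\sim K_S+B_S$, and invoke the cone theorem of \cite{Fuj24} on $S$. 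This yields a rational curve tangent to $\mathcal F'$ with length at most $2\operatorname{rk}\mathcal F\le2\dim X$.

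\textbf{Main obstacle.} The hard step is showing that every negative extremal ray is carried by a subvariety tangent to the foliation, i.e.\ that the ray lies in the image of cones of fibre components. A foliation with local first integrals but no global one has no global fibre to restrict to.

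My first attempt would be to use the ampleness of $K_{\mathcal F'}+\Gamma+tK_{X'}$ over $X$. Given a negative ray $R$, choose a supporting nef divisor $L=K_{\mathcal F'}+\Gamma+A$ with $A$ ample. Run the analytic MMP locally near the (compact) locus swept by curves in $R$, using Lemma~\ref{l_local_mmp}. Deduce that the $L$-trivial curves lie in fibres of local first integrals, since a $(K_{\mathcal F'}+\Gamma)$-negative curve transverse to $\mathcal F'$ would give $(K_{X'}+\ldots)$-positivity by the comparison formula.

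I would then conclude by the induction on dimension described in the introduction: apply Lemma~\ref{l_local_cone} over a point of the contraction base to get the tangent rational curve.
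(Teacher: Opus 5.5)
\textbf{There is a genuine gap, exactly where you locate the main obstacle, and your proposed way around it does not work.}

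Lemmas~\ref{l_local_cone} and~\ref{l_local_mmp} are relative statements over a germ $(Z,z)$ through which a first integral factors as $f=h\circ\pi$. They therefore only see curves lying in one fibre of a local first integral. A global extremal ray $R\subset\overline{\operatorname{NE}}(X)$ has no such base. No contraction of $R$ is available at this stage: Theorem~\ref{t_contraction} is deduced \emph{from} the cone theorem, under extra hypotheses. Even granting a contraction, nothing makes a local first integral factor through it.

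Your reason why negative curves should be tangent, namely that a transverse one would be excluded ``by the comparison formula'', is empty. Proposition~\ref{prop_canonical_comparison} only holds on the domain $U$ of a first integral $\Phi\colon U\to Y$ with $Y$ a disc. Any compact curve contained in $U$ is contracted by $\Phi$, hence already tangent. So no step of the proposal actually produces a tangent rational curve spanning $R$, and the ``standard argument'' of Step~2 is never specified.

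Step~1 also has problems:
\begin{itemize}
\item Lemma~\ref{l_only_lc_extracted} concerns the perturbations $B+tB'$ and says nothing about $\pi$ being an isomorphism.
\item Splitting classes of $\overline{\operatorname{NE}}(X')$ by the sign of $G\cdot\xi$ is not justified for limits of cycles. The paper avoids this by passing to a small $\mathbb Q$-factorialisation with the crepant boundary $\tau_*^{-1}\Gamma+\tau_*^{-1}G$.
\end{itemize}

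\textbf{What is missing is the paper's two-case analysis.} Once $X$ is klt and $\mathbb Q$-factorial, take an exposed ray $R\not\subseteq Z_{-\infty}$ with nef supporting divisor $H_R=D+A$.

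\emph{Case $H_R$ not big.} The foliated bend-and-break \cite[Corollary~2.28]{Spicer20}, used as in \cite[Lemma~3.12]{ACSS}, directly gives a rational curve $\xi$ tangent to $\mathcal F$ with $H_R\cdot\xi=0$ and $0<-D\cdot\xi\le 2n$. Here tangency comes from bend-and-break, not from first integrals.

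\emph{Case $H_R$ big.}
\begin{enumerate}
\item Write $H_R\equiv A_0+G_0$ and pick a prime component $G$ of $G_0$ with $G\cdot R<0$, so $G$ carries $R$.
\item Let $\lambda_0$ be the minimum of the lc thresholds $c_W$ of $G$ at generic points of subvarieties $W\subseteq G$ carrying $R$, and let $W_0$ be maximal attaining it.
\item Extract an lc place $E$ of $(\mathcal F,\Delta+\lambda_0G)$ over $W_0$, by applying Theorem~\ref{t_adj_model} to $\Delta+(\lambda_0+\delta)G$ together with Lemma~\ref{l_only_lc_extracted}.
\item Apply foliation adjunction \cite{CS23b} to the normalisation $S$ of $E$. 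If $E$ is not invariant, $S$ carries a co-rank one foliation with local first integrals and the induction hypothesis in lower dimension applies. If $E$ is invariant, the classical cone theorem for the lc pair $(S,\Gamma_S)$ applies.
\item Check $F_S\cdot R_S\ge0$, push the curve forward, and verify tangency.
\end{enumerate}

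This is where induction on dimension enters. Your Step~3 recovers only the invariant case, and Lemma~\ref{l_local_cone} involves no induction at all.
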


\begin{proof}
We proceed by induction on $n\coloneqq\dim X$ and set
\(
D\coloneqq K_{\mathcal F}+\Delta
\).
The claim is immediate if $n=1$.

By Theorem~\ref{t_adj_model},  there exists a projective birational morphism
\(
\pi\colon X'\to X
\)
such that $X'$ is klt and, if
\(
\mathcal F'\coloneqq\pi_*^{-1}\mathcal F
\),
\[
 \Delta'
\coloneqq
\Delta\wedge
\left(
\sum_{P\subseteq\operatorname{Supp}\Delta}\epsilon(P)P
\right),
\]
and
\(
\Gamma\coloneqq
\pi_*^{-1} \Delta'+E_{\mathrm{non\text{-}inv}},
\)
where $E$ is the reduced $\pi$-exceptional divisor, then
\[
K_{\mathcal F'}+\Gamma+G
=
\pi^*(K_{\mathcal F}+\Delta)
\]
for some effective $\mathbb Q$-divisor $G$ such that
\(
\pi(\operatorname{Supp}G)
\subseteq
\operatorname{Nlc}(\mathcal F,\Delta)
\).

Let
\(
\tau\colon X''\to X'
\)
be a small  $\mathbb Q$-factorialisation, whose existence
follows from \cite[Corollary~1.4.3]{BCHM10}. Set
\(
\mu\coloneqq\pi\circ\tau
\),
\(
\mathcal F''\coloneqq\tau_*^{-1}\mathcal F'
\), and
\(
\Delta''\coloneqq
\tau_*^{-1}\Gamma+\tau_*^{-1}G
\).
Since $\tau$ is small, we have
\[
K_{\mathcal F''}+\Delta''
=
\mu^*(K_{\mathcal F}+\Delta).
\]
Moreover, $X''$ is  klt and $\mathbb Q$-factorial, and
$\Delta''\geq0$.

It is enough to prove the theorem for
$(X'',\mathcal F'',\Delta'')$. Indeed,
\(
\mu_*\overline{\operatorname{NE}}(X'')
=
\overline{\operatorname{NE}}(X)
\),
and crepantness implies that the centre on $X$ of every non-log
canonical place of $(\mathcal F'',\Delta'')$ is a non-log canonical
centre of $(\mathcal F,\Delta)$. Moreover, a
$(K_{\mathcal F''}+\Delta'')$-negative curve is not contracted by
$\mu$. If $C''$ is such a curve on $X''$, $C=\mu(C'')$, and $d$ is the
degree of $C''\to C$, then
\[
-(K_{\mathcal F}+\Delta)\cdot C
=
\frac{
 -(K_{\mathcal F''}+\Delta'')\cdot C''
}{d}.
\]
Thus, replacing $(X,\mathcal F,\Delta)$ by
$(X'',\mathcal F'',\Delta'')$, we may assume that $X$ is 
klt and $\mathbb Q$-factorial.

\medskip 

Let
\(
R\subset\overline{\operatorname{NE}}(X)
\)
be a $D$-negative exposed extremal ray which is not contained in
$Z_{-\infty}$. Let $A$ be an ample $\mathbb R$-divisor such that
\(
H_R\coloneqq D+A
\)
is nef and
\(
\overline{\operatorname{NE}}(X)\cap H_R^\perp=R
\).

Suppose first that $\nu(H_R)<n$. Set
\(
\nu\coloneqq\nu(H_R)
\),
let
\(
D_i=H_R
\)
for $1\leq i\leq\nu+1$, and let
\(
D_i=A
\)
for $\nu+1<i\leq n$. Then
\(
D_1\cdots D_n=0
\)
and
\(
-D\cdot D_2\cdots D_n>0
\).
Fix $k\gg0$ such that
\(
M\coloneqq kH_R-D
\)
is ample. Arguing as in the proof of
\cite[Lemma~3.12]{ACSS}, and applying
\cite[Corollary~2.28]{Spicer20}, we obtain a rational curve $\xi$
tangent to $\mathcal F$ such that $H_R\cdot\xi=0$ and
\[
\begin{aligned}
0<-D\cdot\xi=M\cdot\xi
&\leq
2n
\frac{
 M\cdot H_R^\nu\cdot A^{n-\nu-1}
}{
 -K_{\mathcal F}\cdot H_R^\nu\cdot A^{n-\nu-1}
}\\
&=
2n
\frac{
 -D\cdot H_R^\nu\cdot A^{n-\nu-1}
}{
 -K_{\mathcal F}\cdot H_R^\nu\cdot A^{n-\nu-1}
}
\leq2n.
\end{aligned}
\]
Since 
\(
\overline{\operatorname{NE}}(X)\cap H_R^\perp=R
\),
the curve $\xi$ spans $R$.

\medskip 

Suppose now that $H_R$ is big. We may write
\(
H_R\equiv A_0+G_0
\),
where $A_0$ is ample and $G_0\geq0$. Since $H_R\cdot R=0$, we have
$G_0\cdot R<0$. We may therefore choose a prime component
$G\subseteq\operatorname{Supp}G_0$ such that
\(
G\cdot R<0
\).
Since $X$ is $\mathbb Q$-factorial, $G$ is $\mathbb Q$-Cartier.
In particular $G$ carries $R$.
Since
$R\not\subseteq Z_{-\infty}$, the pair
$(\mathcal F,\Delta)$ is log canonical at the generic point of every
subvariety which carries $R$ (cf. \S \ref{s_preliminaries}).

For every $W\subseteq G$ which carries $R$, let
\[
c_W
\coloneqq
\sup\left\{
c\geq0
\ \middle|\
(\mathcal F,\Delta+cG)
\text{ is log canonical at the generic point of }W
\right\}.
\]

\begin{claim}
The numbers $c_W$ form a finite set of rational numbers and 
\((\mathcal F,\Delta+c_WG)\) is log canonical at the generic point of \(W\). 
\end{claim}

\begin{proof}[Proof of the Claim]
Choose finitely many relatively compact open subsets $V_i$ covering
$X$, where each $V_i$ is contained in an open subset $U_i$ on which
$\mathcal F$ admits a holomorphic first integral \(\Phi_i\colon U_i\to Z_i\) onto a smooth analytic curve $Z_i$.
After shrinking $U_i$ further, we may assume that $\Supp R(\Phi_i)$ has only finitely many irreducible components. 
 On each $U_i$,
resolve simultaneously the first integral and
$\operatorname{Supp}(\Delta+G)$ over a neighbourhood of
$\overline{V_i}$. 

Lemma \ref{l_lc_comparison} implies that 
  foliated log canonicity
on $V_i$ coincides with ordinary log canonicity after adding the corresponding
reduced fibre of the first integral. It follows that every finite
threshold $c_W$ with $W\cap V_i\neq \emptyset$ is computed by one of the finitely many prime
divisors $P$ on this fixed model and is of the form
\[
\frac{
 a(P,\mathcal F,\Delta)+\epsilon(P)
}{
 \operatorname{ord}_P(G)
}
\]
for some \(P\) such that \( \operatorname{ord}_P(G)>0\). 
The claim then follows easily.
\end{proof}

Set
\[
\lambda_0
\coloneqq
\min\left\{
c_W
\ \middle|\
W\subseteq G\text{ carries }R
\right\},
\]
and choose $W_0$ maximal with respect to inclusion among the
subvarieties which carry $R$ and satisfy $c_{W_0}=\lambda_0$.
In particular, $W_0$ is a log canonical centre of
$(\mathcal F,\Delta+\lambda_0G)$. 

Set
\(
\Theta_0\coloneqq\Delta+\lambda_0G
\).
Note that $(\mathcal F,\Theta_0)$ is log canonical at the
generic point of every subvariety $V\subseteq G$ which carries $R$,
since $c_V\geq\lambda_0$.

Let $\alpha>0$ be the constant given by
Lemma~\ref{l_only_lc_extracted}, applied to
$(\mathcal F,\Theta_0)$ and $G$. Choose a sufficiently small rational
number
\(
0<\delta<\alpha
\)
such that there is no threshold $c_W$ in
\(
(\lambda_0,\lambda_0+\delta]
\).
Set
\(
\Theta_\delta\coloneqq
\Delta+(\lambda_0+\delta)G
\),
and let
\(
\pi_\delta\colon X_\delta\to X
\)
be the modification of $(\mathcal F,\Theta_\delta)$ given by
Theorem~\ref{t_adj_model}. Let
\(
\tau_\delta\colon Y\to X_\delta
\)
be a small  $\mathbb Q$-factorialisation and set
\(
\mu\coloneqq\pi_\delta\circ\tau_\delta
\). Let $\mathcal F_Y$ be the induced foliation on $Y$. 

For $t\in\{0,\delta\}$, set
\[
\Theta_t
\coloneqq
\Delta+(\lambda_0+t)G,
\qquad
\Theta_t'
\coloneqq
\Theta_t\wedge
\left(
\sum_{P\subseteq\operatorname{Supp}\Theta_t}
\epsilon(P)P
\right),
\]
and
\(
\Gamma_t
\coloneqq
\mu_*^{-1}\Theta_t'
+(E_\mu)_{\mathrm{non\text{-}inv}},
\)
where $E_\mu$ is the reduced $\mu$-exceptional divisor.
Property~{\rm (6)} of Theorem~\ref{t_adj_model} gives
\[
K_{\mathcal F_Y}+\Gamma_\delta+F_\delta
=
\mu^*(K_{\mathcal F}+\Theta_\delta)
\]
for some $F_\delta\geq0$. Define $F_0$ by
\[
K_{\mathcal F_Y}+\Gamma_0+F_0
=
\mu^*(K_{\mathcal F}+\Theta_0).
\]
Since $\Gamma_0\leq\Gamma_\delta$, 
Property~{\rm (2)} of Theorem~\ref{t_adj_model} implies that 
$(\mathcal F_Y,\Gamma_0)$ 
and $(\mathcal F_Y,\Gamma_\delta)$ 
are log canonical.
Moreover, Lemma~\ref{l_only_lc_extracted} implies that $F_0\geq0$ and
\[
\mu(\operatorname{Supp}F_0)
\subseteq
\operatorname{Nlc}(\mathcal F,\Theta_0).
\]

Since $W_0$ is a log canonical centre of
$(\mathcal F,\Theta_0)$ and $W_0\subseteq G$, the pair
$(\mathcal F,\Theta_\delta)$ is not log canonical at the generic point of $W_0$.
Since $(\mathcal F_Y,\Gamma_\delta)$ is log canonical, there exists
a component $E$ of $F_\delta$ such that
\(
W_0\subseteq\mu(E)
\).
On the other hand, $(\mathcal F,\Theta_0)$ is log canonical at the generic point of 
$W_0$, and hence
\(
E\not\subseteq\operatorname{Supp}F_0
\). 
Thus, if
\(
T\coloneqq\mu(E)
\),
then
\(
W_0\subseteq T\subseteq G
\), and $T$ carries $R$.

It follows that $(\mathcal F,\Theta_\delta)$ is not log canonical at
the generic point of $T$ and \(
\lambda_0\leq c_T<\lambda_0+\delta
\).
The choice of $\delta$ implies that $c_T=\lambda_0$, and the
maximality of $W_0$ gives $T=W_0$.
Moreover, $E$ is a log canonical place of
$(\mathcal F,\Theta_0)$. 

Let
\(
\nu\colon S\to E
\)
be the normalisation, let
\(
j\colon S\to Y
\)
be the induced morphism, and set
\(
\rho\coloneqq\mu\circ j\colon S\to X
\).
By foliation adjunction, \cite{CS23b}, there exists a foliation $\mathcal G$ on $S$ and a $\mathbb Q$-divisor $\Gamma_S$ on $S$ such that
\[
j^*(K_{\mathcal F_Y}+\Gamma_0)
=
K_{\mathcal G}+\Gamma_S,
\]
where $\Gamma_S\geq0$. If $E$ is not invariant, then
$(\mathcal G,\Gamma_S)$ is a log canonical foliated pair,
$\mathcal G$ has co-rank one and admits local first integrals, by
\cite[Theorem~3.16]{CS23b}. If $E$ is invariant, then
$\mathcal G=T_S$ and $(S,\Gamma_S)$ is log canonical, cf. Proposition \ref{prop_lc_fibre_adjunction}.

Since $Y$ is $\mathbb Q$-factorial, $F_0$ is $\mathbb Q$-Cartier.
Moreover, $E\not\subseteq\operatorname{Supp}F_0$, and therefore
\(
F_S\coloneqq j^*F_0\geq0
\).
We have
\[
K_{\mathcal G}+\Gamma_S+F_S
=
\rho^*(K_{\mathcal F}+\Theta_0).
\]

Since $W_0$ carries $R$, the face
\[
\mathcal C
\coloneqq
\overline{\operatorname{NE}}(S)
\cap(\rho^*H_R)^\perp
\]
contains a class which is not contracted by $\rho$. 
Thus, there exists an extremal ray
\(
R_S\subseteq\overline{\operatorname{NE}}(S)
\)
which is not contracted by $\rho$ and such that
\(
\rho_*R_S=R
\).

We claim that
\(
F_S\cdot R_S\geq0
\).
Otherwise,  some
prime component $P$ of $F_S$ carries $R_S$. Then $\rho(P)$ carries
$R$ and
\(
\rho(P)
\subseteq
\operatorname{Nlc}(\mathcal F,\Theta_0)
\).
On the other hand,
\(
\rho(P)\subseteq\rho(S)=W_0\subseteq G
\),
contradicting the fact that $(\mathcal F,\Theta_0)$ is log canonical
at the generic point of every subvariety of $G$ which carries $R$.

Since $D\cdot R<0$ and $G\cdot R<0$, we have
\(
(K_{\mathcal F}+\Theta_0)\cdot R<0
\).
It follows that
\(
(K_{\mathcal G}+\Gamma_S)\cdot R_S<0
\).
If $E$ is not invariant, the induction hypothesis gives a rational
curve $\xi'$ tangent to $\mathcal G$, spanning $R_S$, such that
\[
0<
-(K_{\mathcal G}+\Gamma_S)\cdot\xi'
\leq
2\dim S
\leq2n.
\]
If $E$ is invariant, the classical cone theorem applied to the log
canonical pair $(S,\Gamma_S)$ gives the same conclusion. In this
case every curve on $S$ is tangent to
\(
\mathcal G=T_S
\).

Since
\(
F_S\cdot R_S\geq0
\)
and
\(
(K_{\mathcal G}+\Gamma_S+F_S)\cdot R_S<0
\),
in either case we have
\[
0<
-(K_{\mathcal G}+\Gamma_S+F_S)\cdot\xi'
\leq2n.
\]

The curve $\xi'$ is not contracted by $\rho$. Let
\(
\xi\coloneqq\rho(\xi')
\)
and let $d$ be the degree of $\xi'\to\xi$. Then $\xi$ is a rational
curve spanning $R$. By the projection formula and the inequality
$G\cdot\xi<0$, we have
\[
\begin{aligned}
0<-D\cdot\xi
&=-(D+\lambda_0G)\cdot\xi+\lambda_0G\cdot\xi\\
&\leq-(D+\lambda_0G)\cdot\xi\\
&=
\frac{
 -(K_{\mathcal G}+\Gamma_S+F_S)\cdot\xi'
}{d}
\leq2n.
\end{aligned}
\]

Finally, $\xi$ is tangent to $\mathcal F$.
Choose a general point $x'\in\xi'$ and set $x:=\rho(x')$.
We may assume that $\xi'$ and $\xi$ are smooth at $x'$ and $x$,
respectively, and that $\rho|_{\xi'}$ is a local isomorphism
at $x'$.

Let $\Phi_x\colon U\to Z_x$ be a holomorphic first integral
for $\mathcal F$ on an open neighbourhood $U$ of $x$, and
choose a sufficiently small connected open neighbourhood
$U'\subseteq\rho^{-1}(U)$ of $x'$.
If $E$ is not invariant, then
\(
(\Phi_x\circ\rho)|_{U'}
\)
is a holomorphic first integral for $\mathcal G|_{U'}$.
Since $\xi'$ is tangent to $\mathcal G$, this map is constant
along $\xi'$ near $x'$.
If $E$ is invariant, then $(\Phi_x\circ\rho)|_{U'}$ is constant
on $U'$, so the same conclusion holds.
Since $\rho|_{\xi'}$ is a local isomorphism at $x'$,
it follows that $\Phi_x$ is constant along $\xi$ near $x$.
Thus $\xi$ is tangent to $\mathcal F$, as required.

Thus, we have proved that every $D$-negative exposed extremal ray not
contained in $Z_{-\infty}$ is spanned by a rational curve $\xi$ tangent to
$\mathcal F$ such that 
$0<-(K_{\mathcal F}+\Delta)\cdot \xi \leq 2\dim X$.
Standard arguments then apply to show that the Kleiman-Mori cone has the claimed form.
\end{proof}

\subsection{Existence of contractions}

We recall the following result due to Araujo-Druel. 

\begin{lemma}
\label{lem_coho}
Let $X$ be a variety, let $q \ge 0$ be an integer and let $\mathcal E$ be a coherent sheaf on $X$ which satisfies Serre's condition $S_{q+2}$ and such that $\Supp \mathcal E=X$.
Let $U \subset X$ be an open subset such that $X \setminus U$ is of codimension at least $q+2$ in $X$.

Then $H^i(X, \mathcal E) \simeq H^i(U, \mathcal E|_U)$ for all $0 \leq i \leq q$
and $H^{q+1}(X, \mathcal E) \to H^{q+1}(U, \mathcal E|_U)$ is injective.
\end{lemma}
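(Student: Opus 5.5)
The plan is to compare both groups through local cohomology. Let $Z \coloneqq X\setminus U$, which is a closed subset of codimension at least $q+2$. The starting point is the long exact sequence of cohomology with supports:
\[
\cdots\to H^i_Z(X,\mathcal E)\to H^i(X,\mathcal E)\to H^i(U,\mathcal E|_U)\to H^{i+1}_Z(X,\mathcal E)\to\cdots .
\]
From this sequence, both assertions follow at once from the single vanishing statement
\[
H^i_Z(X,\mathcal E)=0 \qquad\text{for } 0\le i\le q+1 .
\]
Indeed, this vanishing gives isomorphisms for $i\le q$. For $i=q+1$ it gives injectivity, since the map $H^{q+1}(X,\mathcal E)\to H^{q+1}(U,\mathcal E|_U)$ has kernel equal to the image of $H^{q+1}_Z(X,\mathcal E)=0$.

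To prove the vanishing, I would pass to the local-to-global spectral sequence
\[
E_2^{a,b}=H^a\bigl(X,\mathcal H^b_Z(\mathcal E)\bigr)\Rightarrow H^{a+b}_Z(X,\mathcal E),
\]
where $\mathcal H^b_Z$ denotes the sheaves of local cohomology. It then suffices to show that $\mathcal H^b_Z(\mathcal E)=0$ for all $b\le q+1$. This is a local statement at points $z\in Z$. By Grothendieck's vanishing criterion (in the analytic setting, the theorem of Trautmann, Frisch and Siu), $\mathcal H^b_Z(\mathcal E)_z=0$ for all $b<k$ as soon as $\operatorname{depth}\mathcal E_{z'}\ge k$ for every point $z'\in Z$ near $z$; equivalently, $\operatorname{depth}_{I_Z}\mathcal E\ge k$ on the stalks. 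So the task reduces to showing depth at least $q+2$ along $Z$.

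This is where the hypotheses enter. Since $\operatorname{Supp}\mathcal E=X$, the dimension of $\mathcal E_{z}$ equals $\dim\mathcal O_{X,z}$, which is at least $\operatorname{codim}Z\ge q+2$ for $z\in Z$. Serre's condition $S_{q+2}$ then gives
\[
\operatorname{depth}\mathcal E_z\ge\min\{q+2,\dim\mathcal E_z\}=q+2 .
\]
The main point requiring care is the passage from pointwise depth to the depth along $Z$ needed for the vanishing of $\mathcal H^b_Z$. In the algebraic case this is standard:
\[
\operatorname{depth}_{I_Z}\mathcal E=\min_{z\in Z}\operatorname{depth}\mathcal E_z .
\]
In the analytic case I would invoke Siu–Trautmann's gap-sheaf results, or equivalently Scheja's extension theorem, which states exactly that a coherent sheaf with the appropriate pointwise depth along $Z$ has cohomology on the complement that agrees with the cohomology on the whole space in the relevant range. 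The result is also recorded in the paper of Araujo and Druel cited as the source, so alternatively I would just verify their hypotheses and cite it.
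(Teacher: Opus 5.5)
Your argument is correct. It is the standard local-cohomology proof of the result that the paper simply cites, \cite[Lemma~6.4]{AD14}: the sequence with supports reduces both claims to $H^i_Z(X,\mathcal E)=0$ for $i\le q+1$, and this follows from $\mathcal H^b_Z(\mathcal E)=0$ for $b\le q+1$ via the local-to-global spectral sequence.

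The one point to state precisely is that the bound $\operatorname{depth}\mathcal E_z\ge q+2$ must be checked at all points of $Z$, including non-closed (generic) points of subvarieties of $Z$, since it is there that $\dim\mathcal O_{X,z}\ge\operatorname{codim}Z\ge q+2$ makes $S_{q+2}$ effective. Depth at closed points alone does not suffice: for a curve $Z$ in a smooth threefold, $\operatorname{depth}\mathcal O_{X,z}=3$ at every closed point, yet $\mathcal H^2_Z(\mathcal O_X)\neq 0$. In the analytic setting, this is exactly what the Siu--Trautmann criterion you mention encodes.
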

\begin{proof}
This follows from \cite[Lemma 6.4]{AD14}.
\end{proof}

This has the following corollary, cf. \cite[Corollary 2.3]{DPPT26}.

\begin{proposition}
\label{p_bott_vanishing}
Let $X$ be a variety with rational singularities, let $\mathcal F$ be a quasi-smooth co-rank one foliation on $X$, 
and suppose that $\Sing(X, \mathcal F)$ has codimension at least three in $X$.
Let $C$ be a  curve tangent to $\mathcal F$.  

Then $c_1(N^{[*]}_{\mathcal F})\cdot C = 0$.
\end{proposition}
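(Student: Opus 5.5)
The plan is to reduce the statement to the classical Bott vanishing along leaves on the regular locus, and then extend the relevant cohomological object across the singular set using Lemma~\ref{lem_coho}. Write $L\coloneqq N^{[*]}_{\mathcal F}$. By quasi-smoothness (item (1)), $L$ is a line bundle. Let $X^\circ\coloneqq X\setminus\Sing(X,\mathcal F)$; on $X^\circ$ the foliation is smooth and $L|_{X^\circ}$ carries the Bott partial connection. Concretely, the transition functions of $L$ with respect to local generators $df_i$ (from item (2)) satisfy $df_i=g_{ij}\,df_j$, so $g_{ij}$ is constant along leaves. Hence $\frac{dg_{ij}}{g_{ij}}$ is a section of $L=N^{[*]}_{\mathcal F}$ (a $1$-form vanishing on $T_{\mathcal F}$). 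This gives a Čech class
\[
\beta\coloneqq\left\{\tfrac{dg_{ij}}{g_{ij}}\right\}\in H^1(X,N^{[*]}_{\mathcal F})
\]
mapping to $c_1(L)$ under $H^1(X,N^{[*]}_{\mathcal F})\to H^1(X,\Omega^{[1]}_X)$. The point of item (2) is that the $g_{ij}$ are defined everywhere, not only on $X^\circ$. I would use Lemma~\ref{lem_coho} as the substitute for this wherever the generators are only known on $X^\circ$: $L$ is locally free on a normal variety, hence $S_2$, and $\Sing(X,\mathcal F)$ has codimension $\ge3$, so (taking $q=1$) $H^1(X,L)\simeq H^1(X^\circ,L)$. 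This is the only place the codimension-three hypothesis enters, and it is exactly the role it plays in \cite[Corollary 2.3]{DPPT26}.

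Next I restrict to $C$. Let $\nu\colon \tilde C\to C$ be the normalisation and $j\colon\tilde C\to X$ the composite; $\tilde C$ is smooth, so it has rational singularities. Then
\[
\deg j^*L = c_1(L)\cdot C
\]
up to the degree of $\nu$, which is $1$. Moreover, $c_1(j^*L)\in H^1(\tilde C,\Omega^1_{\tilde C})$ is the image of $j^*\beta$ under $H^1(\tilde C,j^*L)\to H^1(\tilde C,j^*\Omega^{[1]}_X)\xrightarrow{dj}H^1(\tilde C,\Omega^1_{\tilde C})$. Indeed, $c_1$ is the $d\log$ class of the transition functions $g_{ij}\circ j$, and $d(g_{ij}\circ j)/(g_{ij}\circ j)=dj(j^*(dg_{ij}/g_{ij}))$, using the pull-back of reflexive forms from \cite[Theorem 1.10]{KS21}. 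By Remark~\ref{rem_tang_zero}, the composition $j^*N^{[*]}_{\mathcal F}\to\Omega^1_{\tilde C}$ is identically zero, since $C$ is tangent to $\mathcal F$. Hence $c_1(j^*L)=0$ in $H^1(\tilde C,\Omega^1_{\tilde C})\simeq\mathbb C$, so $\deg j^*L=0$, which gives the claim.

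The main obstacle is making $\beta$ genuinely live on all of $X$ in a way compatible with the pullback along $j$, since $C$ may lie inside $\Sing(X,\mathcal F)$. I would handle this with item (2) of quasi-smoothness: on each $U_x$ the generator $df$ of $L$ is holomorphic on $U_x$, so the $g_{ij}$ and $dg_{ij}/g_{ij}\in H^0(U_{ij},\Omega^{[1]}_X)$ are defined on overlaps, and the identity $dg_{ij}\wedge df_j=0$ holds on $X^\circ$ and hence everywhere by reflexivity. This makes the pullback computation legitimate even when $C\subset\Sing(X,\mathcal F)$. The remaining care is that pulling back reflexive forms via $dj$ respects $d\log$ of functions, which holds because $dj$ agrees with the usual pull-back on Kähler differentials of holomorphic functions.
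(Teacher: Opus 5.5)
Your proof is correct, but you obtain the key global class differently from the paper.

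\textbf{The paper's route.} It uses only the Bott partial connection on $U=X\setminus\Sing(X,\mathcal F)$.
It lifts $c_1(N^{[*]}_{\mathcal F}|_U)$ to $H^1(U,N^{[*]}_{\mathcal F}|_U)$.
It then extends this lift to $H^1(X,N^{[*]}_{\mathcal F})$ by Lemma~\ref{lem_coho} with $q=1$. This is where codimension at least three is used, together with the $S_3$ property of $N^{[*]}_{\mathcal F}$, which comes from $X$ being Cohen--Macaulay.
Finally, it identifies the image of the extended class in $H^1(X,\Omega^{[1]}_X)$ with $c_1(N^{[*]}_{\mathcal F})$, using injectivity of $H^1(X,\Omega^{[1]}_X)\to H^1(U,\Omega^1_U)$ (Lemma~\ref{lem_coho} with $q=0$).

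\textbf{Your route.} You write the Bott cocycle $\{dg_{ij}/g_{ij}\}$ on all of $X$. This works because item~(2) of quasi-smoothness gives \emph{exact} generators $df_i$ of $N^{[*]}_{\mathcal F}$ on the whole of each chart.
The membership $dg_{ij}\in N^{[*]}_{\mathcal F}$ is checked on the dense open set $X^\circ$, where $df_j$ is nowhere zero and $dg_{ij}\wedge df_j=0$. It then holds everywhere because $N^{[*]}_{\mathcal F}$ is the annihilator of $T_{\mathcal F}$ in $\Omega^{[1]}_X$. It is this saturation, not the extension of the wedge identity by reflexivity, that gives membership at points of $\Sing(X,\mathcal F)$.
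After that, the pull-back via \cite{KS21}, the compatibility $dj(j^*dg)=d(g\circ j)$, and Remark~\ref{rem_tang_zero} are exactly as in the paper.

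\textbf{Comparison.} Your route is more elementary: it identifies the lift of $c_1$ by an explicit \v{C}ech computation instead of an injectivity argument. It also shows that the codimension-three hypothesis is superfluous once item~(2) is available. The paper's route, modelled on \cite{DPPT26}, produces the lift using only local freeness of $N^{[*]}_{\mathcal F}$ and the Bott connection where everything is smooth. It pays for this with the codimension and depth hypotheses.

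\textbf{One correction to your fallback.} A locally free sheaf on a normal variety is in general only $S_2$. For an $S_2$ sheaf, Lemma~\ref{lem_coho} only applies with $q=0$, which gives injectivity on $H^1$ rather than the isomorphism $H^1(X,L)\simeq H^1(X^\circ,L)$ that you claim. For $q=1$ you need $S_3$, which holds here because rational singularities are Cohen--Macaulay. Your main construction never calls on this fallback, so nothing breaks. However, the sentence saying this is where the codimension-three hypothesis enters should be replaced: your argument does not use that hypothesis at all.
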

\begin{proof}
Let $U = X \setminus \Sing(X, \mathcal F)$.  Recall that $c_1(N^{[*]}_{\mathcal F}|_U)$ lies in the image of 
\[
H^1(U, N^{[*]}_{\mathcal F}|_U) \to H^1(U, \Omega^1_U),
\]
(cf. \cite[Lemma 2.2]{DPPT26}).
Since $X$ has rational singularities and $N^{[*]}_{\mathcal F}$ is locally free, it is also $S_k$ for all $k \ge 0$, hence Lemma \ref{lem_coho} implies that 
the natural morphism
\begin{align}
\label{iso1}
H^1(X, N^{[*]}_{\mathcal F}) \to H^1(U, N^{[*]}_{\mathcal F}|_U)
\end{align}
is an isomorphism and
as $\Omega^{[1]}_X$ is $S_2$, Lemma \ref{lem_coho} implies that the natural morphism 
\begin{align}\label{inj}
H^1(X, \Omega^{[1]}_X) \to H^1(U, \Omega^1_U)
\end{align}
is injective.
From the isomorphism \eqref{iso1} and the injection \eqref{inj} we see that $c_1(N^{[*]}_{\mathcal F})$ lies in the image of the map $s\colon H^1(X, N^{[*]}_{\mathcal F}) \to H^1(X, \Omega^{[1]}_X)$.

Let $\tilde C \to C$ be the normalisation and let $j\colon \tilde C \to X$ be the composed map.  Since $X$ has rational singularities we have a natural morphism $j^*\Omega^{[1]}_X \to \Omega^1_{\tilde C}$, \cite[Theorem 1.10]{KS21}  which induces a map
$r\colon H^1(X, \Omega^{[1]}_X) \to H^1(\tilde C, \Omega_{\tilde C}^1)$ and for any line bundle $L$ on $X$ we have that $c_1(j^*L)$ lies in the image of $r$.

Thus, $c_1(j^*N^{[*]}_{\mathcal F})$ is in the image of the morphism $r\circ s\colon H^1(X, N^{[*]}_{\mathcal F}) \to H^1(\tilde C, \Omega_{\tilde C}^1)$.
Since $C$ is tangent to $\mathcal F$, by Remark \ref{rem_tang_zero}
 the morphism $j^*N^{[*]}_{\mathcal F} \to \Omega^1_{\tilde C}$ is the zero morphism, hence $c_1(j^*N^{[*]}_{\mathcal F}) = 0$, as required. 
\end{proof}

\begin{theorem}
\label{t_contraction}
Let $X$ be a $\mathbb Q$-factorial projective variety with rational singularities and let $\mathcal F$ be a quasi-smooth co-rank one foliation on $X$, 
and suppose that $\Sing(X, \mathcal F)$ has codimension at least three in $X$.
Let $B \geq 0$ be a $\mathbb Q$-divisor on $X$ such that $(\mathcal F, B)$ is log canonical
and $(X, B)$ is log canonical. Let $R$ be a $(K_{\mathcal F}+B)$-negative extremal ray.

Then there exists a contraction $c_R\colon X \rightarrow Y$ such that for all curves $\Sigma$ on $X$, we have that 
$c_R(\Sigma)$ is a point if and only if $[\Sigma] \in R$.
Moreover, if $c_R$ is a small contraction, then the $(K_{\mathcal F}+B)$-flip exists.
\end{theorem}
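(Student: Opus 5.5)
The strategy is to realise $R$ as a negative extremal ray for the log canonical pair $(X,B)$ itself, and then invoke the classical contraction and flip results for such pairs. The bridge is Proposition~\ref{p_bott_vanishing}, which gives $K_{\mathcal F}\cdot C=K_X\cdot C$ for every curve $C$ tangent to $\mathcal F$.

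\textbf{Step 1: $R$ is spanned by a tangent rational curve.} Since $\mathcal F$ is quasi-smooth, it admits local holomorphic first integrals. As $(\mathcal F,B)$ is log canonical, we have $Z_{-\infty}=0$ in Theorem~\ref{t_cone}. That theorem therefore shows that $R$ is spanned by a rational curve $\xi$ tangent to $\mathcal F$.

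\textbf{Step 2: comparison of $K_{\mathcal F}$ and $K_X$ on tangent curves.} On the locally free locus we have $K_X=K_{\mathcal F}+c_1(N^{[*]}_{\mathcal F})$ up to sign conventions. Because $N^{[*]}_{\mathcal F}$ is locally free everywhere by quasi-smoothness and $X$ is $\mathbb Q$-factorial, this is an identity of $\mathbb Q$-Cartier classes on all of $X$. Proposition~\ref{p_bott_vanishing} then yields
\[
K_{\mathcal F}\cdot C=K_X\cdot C
\]
for every curve $C$ tangent to $\mathcal F$. In particular $(K_X+B)\cdot\xi=(K_{\mathcal F}+B)\cdot\xi<0$, so $R$ is a $(K_X+B)$-negative extremal ray of $\overline{\operatorname{NE}}(X)$. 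This means $R$ is extremal in a cone that does not depend on the divisor, and its negativity has been transferred from $K_{\mathcal F}+B$ to $K_X+B$.

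\textbf{Step 3: the contraction.} Apply the cone and contraction theorem for log canonical pairs (Ambro, Fujino) to $(X,B)$ and the ray $R$. This produces $c_R\colon X\to Y$ contracting exactly the curves whose class lies in $R$, with $\rho(X/Y)=1$.

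\textbf{Step 4: the flip.} If $c_R$ is small, the $(K_X+B)$-flip exists by the existence of flips for log canonical pairs (Birkar, Hacon--Xu). Let $X^+\to Y$ be this flip, equal to $\operatorname{Proj}_Y\bigoplus_m c_{R*}\mathcal O_X(m(K_X+B))$. It remains to show that $X^+$ is also the $(K_{\mathcal F}+B)$-flip. Since $\rho(X/Y)=1$ and both $K_{\mathcal F}+B$ and $K_X+B$ are negative on $R$, we have $K_{\mathcal F}+B\equiv_Y \lambda(K_X+B)$ for some $\lambda>0$. Because $\mathbb Q$-factoriality is preserved by the flip, the strict transforms satisfy the same relation over $Y$. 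The strict transform of $K_{\mathcal F}+B$ is therefore $\mathbb Q$-Cartier and ample over $Y$ on $X^+$, which is exactly the defining property of the $(K_{\mathcal F}+B)$-flip.

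\textbf{Main obstacle.} The delicate point is upgrading numerical equivalence over $Y$ to $\mathbb Q$-linear equivalence, which is what the Proj description of the flip requires. I would first try the relative cone theorem for $(X,B)$, which says that a divisor numerically trivial over $Y$ is $\mathbb Q$-linearly trivial over $Y$; alternatively, one can observe that ampleness over $Y$ only requires numerical data. A second, separate check is that the identity $K_X\sim K_{\mathcal F}-c_1(N^{[*]}_{\mathcal F})$ holds globally rather than only in codimension one; reflexivity together with the codimension-three hypothesis on $\Sing(X,\mathcal F)$ should handle this.
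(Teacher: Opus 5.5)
Your proposal is correct and follows essentially the same route as the paper. Theorem~\ref{t_cone} (with $Z_{-\infty}=0$) gives a tangent rational curve spanning $R$, Proposition~\ref{p_bott_vanishing} transfers negativity to $K_X+B$, and the contraction and flip then come from Ambro's theorem and Birkar/Hacon--Xu applied to the lc pair $(X,B)$.

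Your Step~4 usefully makes explicit what the paper leaves implicit, but the justification can be cleaner. In fact $\lambda=1$, and $K_{\mathcal F}-K_X\sim -c_1(N^{[*]}_{\mathcal F})$ is a line bundle that is trivial on $R$. By the contraction theorem it is therefore pulled back from $Y$, so $K_{\mathcal F}+B\sim_{\mathbb Q,Y}K_X+B$ and the two flips coincide. It is this descent, not the $\mathbb Q$-factoriality of $X^+$, that makes the relation pass to $X^+$.
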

\begin{proof}
By Theorem \ref{t_cone} we have $R = \mathbb R_+[\xi]$ where $\xi$
is a curve on $X$ tangent to $\mathcal F$.
Since $\mathcal F$ is quasi-smooth and $\xi$ is tangent to $\mathcal F$, 
Proposition \ref{p_bott_vanishing} 
guarantees that  
$K_{\mathcal F}\cdot \xi= K_X\cdot \xi$, and so  $(K_X+B)\cdot \xi <0$.

Since $(X, B)$ is log canonical we may realise
our contraction as a $(K_X+B)$-negative contraction which exists by \cite[Theorem 0.2]{Ambro01}.
In the case of a flipping contraction, we may again realise the $(K_{\mathcal F}+B)$-flip 
as a $(K_X+B)$-flip, which exists by \cite{Birkar12} or \cite{HX11}.
\end{proof}

\subsection{Malgrange's theorem}

Let us recall the following version of Malgrange's theorem for singular varieties essentially due to \cite{CLN08}.

\begin{theorem}
\label{t_malgrange}
Let $(X,x)$ be a germ of a normal variety and let $\mathcal F$ be a co-rank one foliation on $X$.
Suppose that 
\begin{enumerate}
\item $\dim \Sing X \leq \dim X -3$;
\item $\dim \Sing \mathcal F \leq \dim X - 3$;
\item $N^{[*]}_{\mathcal F}$ is a line bundle; and 
\item $X$ has rational singularities.
\end{enumerate}

Then $\mathcal F$ admits a holomorphic first integral.  Moreover, there exist germs of analytic functions $f, g \in \mathcal O_{X, x}$ with $g(x) \neq 0$, such that $N^{[*]}_{\mathcal F}$
is generated by $gdf$.
\end{theorem}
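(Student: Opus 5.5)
The plan is to reduce the statement to the singular version of Malgrange's theorem proved by Cerveau and Lins Neto \cite{CLN08}. Their result says the following. Let $(X,x)$ be a germ that is $k$-regular for $k=0,1,2$ and satisfies $\operatorname{codim}\Sing X\geq 3$. Let $\omega$ be an integrable holomorphic $1$-form on $X^\circ$ (equivalently, by $1$-regularity, the restriction of an ambient form) whose singular set has codimension at least three. Then $\omega = g\,df$ with $f,g$ holomorphic and $g(x)\neq 0$. So the work splits into two parts: checking the hypotheses of \cite{CLN08} from ours, and translating the output back into statements about $\mathcal F$.

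\textbf{Step 1: the defining form.} By (3), after shrinking we choose a generator $\omega$ of $N^{[*]}_{\mathcal F}$. This is a reflexive $1$-form, i.e.\ a holomorphic section of $\Omega^1$ over $X^\circ = X\setminus \Sing X$. It is integrable, $\omega\wedge d\omega=0$ on $X^\circ$, because $T_{\mathcal F}$ is closed under the Lie bracket. Its zero locus on $X^\circ$ is $\Sing\mathcal F\cap X^\circ$, which has codimension $\geq 3$ by (2). Hence the zero set of $\omega$ on $X^\circ$, together with $\Sing X$, has codimension $\geq 3$ in $X$.

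\textbf{Step 2: $k$-regularity for $k\leq 2$.} For $0$-regularity we use normality (Remark~\ref{r_kregular}). For $k=1,2$ we use rational singularities. By \cite{KS21}, reflexive $k$-forms on a variety with rational singularities extend to holomorphic forms on any resolution. Extension to the ambient space $U$ is a separate issue. I would therefore check what \cite{CLN08} really uses. I expect they need only that every $k$-form on $X^\circ$, for $k\leq 2$, extends holomorphically in a sense good enough to run their de Rham–type argument. This means vanishing of suitable local cohomology, which follows from the depth/codimension-three hypothesis together with Lemma~\ref{lem_coho}. If ambient extension is genuinely required, I would invoke the results of \cite{CLN08} (or the Greb–Kebekus–Kovács–Peternell extension theorem) stating that rational singularities with $\operatorname{codim}\Sing X\geq 3$ are $1$- and $2$-regular in their sense. \textbf{This is the main obstacle}: matching our hypotheses (1) and (4) precisely to their regularity hypotheses.

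\textbf{Step 3: the first integral.} With the hypotheses checked, \cite{CLN08} gives $\omega = g\,df$ on $X^\circ$ near $x$, with $f,g\in\mathcal O_{X,x}$ (holomorphic on $X^\circ$, hence on $X$ by normality) and $g(x)\neq 0$. Then:
\begin{itemize}
\item $g\,df$ generates $N^{[*]}_{\mathcal F}$, since it equals $\omega$ on $X^\circ$ and both are sections of a reflexive sheaf.
\item $T_{\mathcal F}=T_f$: on $X^\circ$ minus $\Sing\mathcal F$ both distributions are the kernel of $\omega = g\,df$ with $g$ a unit, and both sheaves are saturated in $T_X$, so they agree everywhere.
\end{itemize}
Hence $f$ is a holomorphic first integral, and the "moreover" statement holds.
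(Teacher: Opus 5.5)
Your reduction to the Main Theorem of \cite{CLN08} is the paper's strategy, and Steps~1 and~3 are fine. Step~2, which you flag and leave open, is a genuine gap, and neither of your proposed fixes works.

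Ambient $1$-regularity in the sense of Definition~\ref{def_kregular} does \emph{not} follow from rational singularities and $\operatorname{codim}\Sing X\geq 3$. The extension results you cite (\cite{KS21}, or Greb--Kebekus--Kov\'acs--Peternell) extend forms to a resolution, not to the ambient space. For a counterexample, take the terminal quotient $X=\mathbb C^3/\{\pm1\}$, embedded in $\mathbb C^6$ by $y_{ij}=x_ix_j$. The invariant form $x_1\,dx_2$ descends to a holomorphic $1$-form on $X^\circ$. Restrictions of ambient forms $\sum a_{ij}(y)\,dy_{ij}$ pull back to $\sum a_{ij}(x_kx_l)(x_i\,dx_j+x_j\,dx_i)$, whose part linear in $x$ is symmetric in $i,j$. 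So $x_1\,dx_2$ is not in the image, and $X$ is not $1$-regular. Such singularities are exactly what the theorem must handle when it is applied in Lemma~\ref{lem_quasi_smooth_preserved}.

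Likewise, local cohomology and Lemma~\ref{lem_coho} do not give ambient extensions of forms. What they do give, once you note that rational singularities are Cohen--Macaulay (so $\mathcal O_X$ is $S_3$), is $H^1(X^\circ,\mathcal O_{X^\circ})\simeq H^1(X,\mathcal O_X)=0$. That is a separate hypothesis of \cite{CLN08}, as the paper records it, and your list omits it and never checks it. Conversely, $2$-regularity is not among those hypotheses.

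The missing idea is to stop trying to prove $1$-regularity and instead isolate what it is used for in \cite{CLN08}. It is needed only to guarantee that, for a resolution $\mu\colon\tilde X\to X$ and a $1$-form $\eta$ on $X^\circ$, the pull-back $\mu^*\eta$ extends holomorphically to $\tilde X$ \emph{and restricts to zero on every component of $\mu^{-1}(x)$}. You have the first half, via \cite{KS21}, but the fibre vanishing is the essential point. It is automatic for ambient forms, since $\mu^{-1}(x)$ maps to a point, and it is precisely what ambient extension supplies in their argument.

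The paper obtains it from the functoriality in \cite[Theorem~1.10]{KS21}. Let $E$ be the normalisation of $\mu^{-1}(x)$, with $i\colon E\to\tilde X$, $m\colon E\to\{x\}$ and $j\colon\{x\}\to X$. Then $d_{\mathrm{refl}}i\circ d_{\mathrm{refl}}\mu=d_{\mathrm{refl}}m\circ d_{\mathrm{refl}}j$, and $d_{\mathrm{refl}}j=0$ because $\Omega^1_{\{x\}}=0$. With this, and with $H^1(X^\circ,\mathcal O_{X^\circ})=0$ verified as above, the proof of \cite{CLN08} goes through and your Step~3 completes the argument.
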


\begin{proof}
This is essentially a rephrasing of \cite[Main Theorem]{CLN08}, with some minor modifications 
which we now explain.

Let \(n\coloneqq\dim X\). If \(n<3\), assumptions (1) and (2) imply that
\(X\) and \(\mathcal F\) are nonsingular near \(x\), and the claim
follows from Frobenius theorem. We may therefore assume
that \(n\geq3\).

Let $X^\circ = X \setminus \Sing X$.
We recall that {\it ibid} implies the conclusion of the Theorem under the following four hypotheses:
\begin{enumerate}
\item[(a)] $H^1(X^\circ, \mathcal O_{X^\circ}) = 0$;
\item[(b)] $X$ is $k$-regular for $k = 0, 1$ (cf. Definition \ref{def_kregular}); 
\item[(c)] $\mathcal F$ is defined by a holomorphic germ of a 1-form $\omega$ such that on $X^\circ$ 
$\dim \Sing \omega \leq \dim X -3$;
\item[(d)] $\dim \Sing X \leq \dim X - 3$.
\end{enumerate}

Hypothesis (d) of  \cite[Main Theorem]{CLN08} is our Hypothesis (1) above. 
Hypothesis (c) is an easy consequence of Hypotheses (2) and (3).
Hypothesis (a) follows from Hypotheses (1) and (4) above and the fact 
that rational singularities are Cohen-Macaulay.

It remains to consider Hypothesis (b).  We observe that $0$-regularity of $X$ is 
implied by the fact that $X$ is normal (cf. Remark \ref{r_kregular}).  The role of $1$-regularity in the proof of \cite[Main Theorem]{CLN08}
is only needed to guarantee that if $\mu\colon \tilde X\to X$ is a resolution of singularities of $X$ and $\eta$
is a $1$-form on $X^\circ$, then $\mu^*\eta$ extends to a holomorphic $1$-form on $\tilde X$ and that $\mu^*\eta$ 
vanishes when restricted to any component of $\mu^{-1}(x)$.

We now verify that this property holds in our setting.
Let $E$ denote the normalisation of $\mu^{-1}(x)$ and consider the diagram 
\begin{center}
\begin{tikzcd}
E \arrow[r, "i"] \arrow[d, "m"]& \tilde X  \arrow[d, "\mu"]\\
\{x\} \arrow[r, "j"] & X.
\end{tikzcd}
\end{center}

If $\eta$ is a $1$-form on $X^\circ$, then $\eta$ naturally gives a reflexive $1$-form on all of $X$.  
By \cite[Theorem 1.10]{KS21} we have a morphism $d_{refl}\mu\colon \mu^*\Omega^{[1]}_X \to \Omega^1_{\tilde X}$ and the 
image of $\eta$ under this morphism gives our required extension of $\mu^*\eta$ to a holomorphic $1$-form on all of $\tilde X$ (which we will continue to denote by $\mu^*\eta$).
To verify that $\mu^*\eta$ vanishes on each component of $E$ it suffices to show that $d_{refl} i\circ d_{refl}\mu$ is the zero morphism.  We observe that
by \cite[Theorem 1.10]{KS21} we have that $d_{refl} i\circ d_{refl}\mu = d_{refl}m \circ d_{refl}j$ and since $\{x\}$ is a point, $d_{refl}j$ is the zero morphism and we can conclude.

With these alterations, the proof of \cite[Main Theorem]{CLN08} goes through to prove our claim.
\end{proof}

\begin{lemma}
\label{lem_quasi_smooth_preserved}
Let $X$ be a normal projective variety and let $\mathcal F$ be a co-rank one foliation on $X$.  Suppose that 
\begin{itemize}
\item $X$ is $\mathbb Q$-factorial; 
\item $X$ has terminal singularities; 
\item $\mathcal F$ is quasi-smooth; and
\item $\mathcal F$ has canonical singularities.
\end{itemize}  
Let $f\colon X \dashrightarrow X'$ be a step of a $K_{\mathcal F}$-MMP and let $\mathcal F'$ be the foliation induced on $X'$. 

Then
\begin{enumerate}
\item $X'$ is $\mathbb Q$-factorial;
\item $X'$ has terminal singularities;
\item $\mathcal F'$ is quasi-smooth; and 
\item $\mathcal F'$ has canonical singularities. 
\end{enumerate}
\end{lemma}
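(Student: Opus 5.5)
The plan is to show that the step $f$ is simultaneously a step of a $K_X$-MMP, so that (1) and (2) follow from the classical theory and (4) from the negativity lemma. Quasi-smoothness (3) is then recovered by descending the conormal line bundle and applying Theorem~\ref{t_malgrange} on $X'$.

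\textbf{Step 1: reduce to a $K_X$-MMP step.} First I check the hypotheses of Proposition~\ref{p_bott_vanishing} for $(X,\mathcal F)$.
\begin{itemize}
\item Terminal singularities are rational, so $X$ has rational singularities.
\item Terminal singularities have $\operatorname{codim}\Sing X\geq 3$.
\item Quasi-smoothness gives local first integrals $\Phi_x\colon U_x\to\mathbb C$. Each fibre of $\Phi_x$ meets $\Sing(X,\mathcal F)$ in dimension at most $\operatorname{rk}\mathcal F-2=n-3$. Since $\Sing \mathcal F$ is contained in a union of fibres over a one-dimensional base, $\operatorname{codim}\Sing(X,\mathcal F)\geq 3$.
\end{itemize}
Let $R$ be the contracted ray. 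By Theorem~\ref{t_cone} (with $\Delta=0$; since $\mathcal F$ is canonical, $Z_{-\infty}=0$), $R$ is spanned by a rational curve $\xi$ tangent to $\mathcal F$. Proposition~\ref{p_bott_vanishing} then gives $K_{\mathcal F}\cdot C=K_X\cdot C$ for every curve $C$ with $[C]\in R$, since all such curves are contracted, hence tangent. Therefore $R$ is $K_X$-negative, and $f$ is the corresponding divisorial contraction or $K_X$-flip (Theorem~\ref{t_contraction}). The classical MMP for $\mathbb Q$-factorial terminal varieties gives (1) and (2).

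\textbf{Step 2: canonical singularities of $\mathcal F'$.} The map $f$ is $K_{\mathcal F}$-negative. By the negativity lemma, $a(E,\mathcal F')\geq a(E,\mathcal F)\geq 0$ for every divisorial valuation $E$, with strict inequality for valuations centred in $\Exc f$. For the divisor contracted by $f$, if there is one, its discrepancy over $X'$ is strictly positive. This gives (4).

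\textbf{Step 3: the conormal sheaf of $\mathcal F'$ is a line bundle.} Set $L:=N^{[*]}_{\mathcal F}$, which is locally free by quasi-smoothness. Then $c_1(L)\equiv K_{\mathcal F}-K_X$, so $L$ is trivial on every curve of $R$ by Step 1. By the contraction theorem for the $K_X$-negative contraction $c_R\colon X\to Y$, we get $L\simeq c_R^*M$ for a line bundle $M$ on $Y$. If $f$ is divisorial, $N^{[*]}_{\mathcal F'}$ and $M$ are reflexive and agree outside a codimension-two set, so they coincide. If $f$ is a flip with $c_R^+\colon X'\to Y$, the same argument gives $N^{[*]}_{\mathcal F'}\simeq (c_R^+)^*M$. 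In both cases the conormal sheaf of $\mathcal F'$ is a line bundle.

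\textbf{Step 4: first integrals and the fibre-dimension condition.} This is the step I expect to be the main obstacle. The fibres of $c_R$ are tangent to $\mathcal F$, so each local first integral $\Phi_x$ is constant on them. Since $Y$ is normal and $c_R$ has connected fibres, $\Phi_x$ descends to a first integral near points of $Y$, and pulls back along $c_R^+$ in the flip case. Hence $\mathcal F'$ admits local first integrals $\Phi'$.

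It remains to show that the fibres of $\Phi'$ meet $\Sing(X',\mathcal F')$ in dimension at most $n-3$. Off $\Exc(f^{-1})$, $f^{-1}$ is an isomorphism, so the condition is inherited from $X$. On $\Exc(f^{-1})$, I would argue as follows.
\begin{itemize}
\item $\Exc(f^{-1})$ is swept out by curves tangent to $\mathcal F'$: the flipped curves, or the fibres of $c_R$ in the divisorial case.
\item By (2) and (4), $\Sing X'$ has codimension at least $3$.
\item I would first try to show that $\Sing\mathcal F'\cap\Exc(f^{-1})$ has codimension at least $3$. The idea is to use that $N^{[*]}_{\mathcal F'}$ is a line bundle together with canonicity of $\mathcal F'$: a codimension-two component of $\Sing\mathcal F'$ inside a leaf would produce an invariant exceptional divisor with non-positive discrepancy after blowing it up.
\item With this codimension bound, Theorem~\ref{t_malgrange} applies on $X'$. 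It gives $N^{[*]}_{\mathcal F'}$ generated by $g\,df$ with $g(x)\neq0$, so condition (2) of quasi-smoothness holds with $f$ as generator.
\end{itemize}

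The fibre-dimension bound (3) is the remaining issue: I must rule out a whole $(n-2)$-dimensional component of a leaf lying in $\Sing(X',\mathcal F')$. I would attack this through the same discrepancy argument, carried out inside the fibre of $f$ over $\Exc(f^{-1})$, combined with upper semicontinuity of fibre dimension for $\Phi'$ restricted to $\Sing(X',\mathcal F')$.
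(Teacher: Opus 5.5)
Your outline follows the paper's proof. You realise $f$ as a $K_X$-negative step via Proposition~\ref{p_bott_vanishing}, get (1) and (2) classically and (4) from the negativity lemma, descend $N^{[*]}_{\mathcal F}$ through $c_R$ to make $N^{[*]}_{\mathcal F'}$ a line bundle, exclude codimension-two components of $\Sing\mathcal F'$ by blowing up, and apply Theorem~\ref{t_malgrange}. As written, though, (3) is not finished, and the point you single out as the main obstacle is trivial. Since $\rk\mathcal F'-2=n-3$, once $\dim\Sing(X',\mathcal F')\le n-3$ holds, each set $\Sing(X',\mathcal F')\cap(\Phi')^{-1}(z)$ has dimension at most $n-3$ simply because it is a subset of $\Sing(X',\mathcal F')$. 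You need that bound anyway to apply Theorem~\ref{t_malgrange}. An ``$(n-2)$-dimensional component of a leaf lying in $\Sing(X',\mathcal F')$'' is just a codimension-two component of $\Sing(X',\mathcal F')$, which your previous bullet already excludes. No semicontinuity argument is needed, and this is exactly how the paper concludes.

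The real content is that exclusion, and your sketch attributes it to the wrong hypothesis. Canonicity of $\mathcal F'$ gives no contradiction: the foliation defined by $d(xy)$ on $\mathbb C^2\times\mathbb C^{n-2}$ is canonical, has a first integral, and has codimension-two singular locus. The correct argument runs as follows.
\begin{enumerate}
\item A codimension-two component $Z$ of $\Sing\mathcal F'$ must lie in $\Exc(f^{-1})$, because off $\Exc(f^{-1})$ the bound $\operatorname{codim}\ge 3$ is inherited from $X$.
\item Hence the strict inequality in your Step~2 gives $a(E,\mathcal F')>a(E,\mathcal F)\ge 0$ for every divisor $E$ with centre $Z$; that is, $\mathcal F'$ is terminal at the generic point of $Z$.
\item On the other hand, $X'$ is smooth at the generic point of $Z$. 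If a local generator $\omega$ of $N^{[*]}_{\mathcal F'}$ pulls back to the blow-up along $Z$ with vanishing order $\ell\ge 1$ along the exceptional divisor $E$, then $a(E,\mathcal F')=1-\ell\le 0$, a contradiction. Invariance of $E$ plays no role here.
\end{enumerate}

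Two smaller points. First, the descent of $\Phi_x$ through $c_R$ at the start of your Step~4 is unjustified: a first integral on a small open set need not extend to a neighbourhood of a positive-dimensional fibre, because of holonomy. It is also unnecessary, since Theorem~\ref{t_malgrange} already supplies the local first integrals and the generator $df$. Second, in Step~1 the bound $\operatorname{codim}\Sing(X,\mathcal F)\ge 3$ needs a real reason, since lying in ``a union of fibres'' says nothing. The reason is that on $X\setminus\Sing X$ the set $\Sing\mathcal F$ is the critical locus of $\Phi_x$. The map $\Phi_x$ is constant on each component of that locus, so it lies in finitely many fibres, and terminality bounds $\Sing X$.
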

\begin{proof}
Item (4) is an immediate consequence of the negativity lemma.  By the proof of Theorem \ref{t_contraction} we know that $f$ is also a step in a $K_{X}$-MMP, hence Items (1) and (2) likewise follow, cf. \cite{KM98}.  It remains to show that (3) holds.
We now show that the conditions of Theorem \ref{t_malgrange} are satisfied.

Since $X'$ has terminal singularities it follows that $\dim \Sing X' \leq \dim X' - 3$.
We next show that $\dim \Sing \mathcal F' \leq \dim X' - 3$. 
Suppose, for the sake of contradiction, that there exists a component $Z \subset \Sing \mathcal F'$ such that $Z$ is of codimension two in $X'$.
Since $\mathcal F$ is quasi-smooth this is only possible if $Z \subset \Exc f^{-1}$.  Since $\mathcal F$ has canonical singularities, the negativity lemma implies that $\mathcal F'$ has terminal singularities at the generic point of $Z$.  However, an easy calculation shows that if $Z \subset \Sing \mathcal F'$, then the blow-up of $X'$ along $Z$ extracts a divisor of discrepancy zero, a contradiction.

By Proposition \ref{p_bott_vanishing} we know that $N^{[*]}_{\mathcal F}$ is $f$-trivial, and so the basepoint free theorem implies that $f_*N^{[*]}_{\mathcal F}$ is a line bundle.  Note that $f_*N^{[*]}_{\mathcal F}$ is equal to $N^{[*]}_{\mathcal F'}$ away from a subset of codimension at least two, hence the two sheaves are equal, and so $N^{[*]}_{\mathcal F'}$ is locally free.

Finally, $X'$ has rational singularities because $X'$ has terminal singularities.

We now apply Theorem \ref{t_malgrange} to see that at every point $x \in X'$, the foliation  $\mathcal F'$ admits a local first integral and $N^{[*]}_{\mathcal F'}$ is generated by a $1$-form $df$.  
To see that $\mathcal F'$ is quasi-smooth it then suffices to note that $\dim \Sing (X', \mathcal F') \leq \dim X' - 3 = \rk \mathcal F' - 2$.
\end{proof}

\section{Running the MMP}

\begin{theorem}
\label{t_mmp_scaling}
Let $X$ be a projective variety and let $\mathcal F$ be a foliation on $X$ of co-rank one.
Suppose that 
\begin{enumerate}
\item $X$ is $\mathbb Q$-factorial; 
\item $X$ has terminal singularities; 
\item $\mathcal F$ is quasi-smooth;
\item $\mathcal F$ has canonical singularities.
\end{enumerate}

Then we may run the $K_{\mathcal F}$-MMP. 
Moreover, if either $K_{\mathcal F}$ is big or $K_{\mathcal F}$ is not pseudo-effective, then we may choose the MMP so that it terminates.
\end{theorem}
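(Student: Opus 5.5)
The plan is to verify that every hypothesis needed to perform a single step of the $K_{\mathcal F}$-MMP holds, and that these hypotheses are preserved after the step. Existence of steps then follows by iteration, and termination is reduced to termination of a suitable $K_X$-MMP with scaling.

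First, the input to Theorem~\ref{t_contraction}. Since $X$ is terminal, $\dim\Sing X\le \dim X-3$ and $X$ has rational singularities. Since $\mathcal F$ is canonical and quasi-smooth, the argument in the proof of Lemma~\ref{lem_quasi_smooth_preserved} shows that $\Sing\mathcal F$ has no codimension-two component: blowing up such a component would extract a divisor of foliation discrepancy $0$ with non-invariant image, contradicting terminality in codimension two. Hence $\Sing(X,\mathcal F)$ has codimension at least three. Canonical implies log canonical, and terminal implies that $(X,0)$ is log canonical. A quasi-smooth foliation admits local holomorphic first integrals by definition, so Theorem~\ref{t_cone} applies with $\Delta=0$. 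Here $Z_{-\infty}=0$, since the pair is log canonical.

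Second, the steps. Given a $K_{\mathcal F}$-negative extremal ray $R$, Theorem~\ref{t_contraction} provides the contraction and, in the small case, the flip. Both are realised as steps of a $K_X$-MMP, because $K_{\mathcal F}\cdot\xi=K_X\cdot\xi$ for the tangent curve $\xi$ spanning $R$ (Proposition~\ref{p_bott_vanishing}). Lemma~\ref{lem_quasi_smooth_preserved} then shows that hypotheses (1)--(4) are preserved, so we may iterate. For termination it is convenient to run the MMP with scaling of an ample divisor $A$. At each stage we take $\lambda=\inf\{t: K_{\mathcal F}+tA \text{ nef}\}$ and choose a ray $R$ with $(K_{\mathcal F}+\lambda A)\cdot R=0$ and $K_{\mathcal F}\cdot R<0$. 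Such a ray exists by the cone theorem, which gives a locally discrete set of negative rays away from $K_{\mathcal F}+A\geq 0$.

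Third, termination. If $K_{\mathcal F}$ is not pseudo-effective, write $K_{\mathcal F}+\lambda A$ at the stage where $\lambda$ is minimal. Each contracted ray is spanned by a tangent curve on which $K_{\mathcal F}$ and $K_X$ agree, so the sequence of rays is also a sequence of $(K_X+\lambda_i A)$-trivial and $K_X$-negative rays. I would try to view the MMP as a $(K_X+\Delta)$-MMP with scaling for a suitable klt boundary in order to invoke BCHM-type termination. The most promising approach is to use the relation $K_{\mathcal F}\equiv K_X-c_1(N^{[*]}_{\mathcal F})$, where $N^{[*]}_{\mathcal F}$ is numerically trivial on all contracted rays, and to show that the numerical class of $N^{[*]}_{\mathcal F}$ descends at each step as a line bundle (the proof of Lemma~\ref{lem_quasi_smooth_preserved} shows that its pushforward stays a line bundle). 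The MMP is then a $(K_X-N)$-MMP with scaling of $A$, where $N$ is a Cartier class that is trivial on every contracted ray. For $K_{\mathcal F}$ big we write $K_{\mathcal F}\sim_{\mathbb Q} A'+E'$ and perturb, so that the steps are steps of a $(K_X+\epsilon$-big boundary$)$-MMP with scaling, which terminates by \cite{BCHM10}. For $K_{\mathcal F}$ not pseudo-effective, the scaling MMP ends when $\lambda$ can no longer drop. Termination of MMP with scaling for $K_X+\Delta$ with $\Delta$ big again applies, and the last step is a non-birational contraction, that is, a Mori fibre space.

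The main obstacle is this last step: converting $K_{\mathcal F}$-negativity into negativity for a klt pair $(X,\Delta)$ with $\Delta$ big. This requires replacing $-N^{[*]}_{\mathcal F}$ by an effective $\mathbb Q$-divisor up to numerical equivalence on the relevant rays, which is possible only after adding a multiple of $A$. I would write $K_{\mathcal F}+\lambda A\equiv K_X+(\lambda A-c_1(N))$ and choose $\Delta\sim_{\mathbb Q}\lambda A-c_1(N)+\epsilon A$ general, adjusting $\epsilon$ at each step. I would then check that the BCHM finiteness of models argument still controls this family of boundaries.
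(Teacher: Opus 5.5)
Your existence-of-steps argument is the paper's: Theorem \ref{t_contraction} together with Lemma \ref{lem_quasi_smooth_preserved}. The gap is in termination, which you flag as the main obstacle and do not resolve, and the route you sketch cannot resolve it in general. You try to realise the foliated MMP as a $(K_X+\Delta)$-MMP \emph{with scaling} so that BCHM applies, and there are two ways to compare, both of which fail.
\begin{itemize}
\item If you compare only on contracted rays (tangent curves, where $K_{X_i}$ and $K_{\mathcal F_i}$ agree), you lose the scaling condition. BCHM needs $K_{X_i}+\Delta_i+\mu_iH_i$ to be nef on all of $\overline{\operatorname{NE}}(X_i)$, and $K_{X_i}$ and $K_{\mathcal F_i}$ differ on non-tangent curves.
\item If you compare globally via $\Delta\sim_{\mathbb Q}(\lambda+\epsilon)A-c_1(N^{[*]}_{\mathcal F})$, the boundary need not be effective, or even pseudo-effective. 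Here $c_1(N^{[*]}_{\mathcal F})$ is an arbitrary Cartier class and $\lambda$ can be as small as the pseudo-effective threshold.
\end{itemize}
Changing the boundary at each step also leaves the setting in which finiteness of models yields termination.

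The missing observation is that no global comparison is needed. It suffices that every contracted ray $R_i$ be negative for \emph{one fixed} klt pair with big boundary.

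If $K_{\mathcal F}$ is big, take $D\geq0$ with $D\sim_{\mathbb Q}K_{\mathcal F}$ and $\epsilon>0$ such that $(X,\epsilon D)$ is klt. Since $K_{X_i}\cdot R_i=K_{\mathcal F_i}\cdot R_i=D_i\cdot R_i<0$, you get $(K_{X_i}+\epsilon D_i)\cdot R_i=(1+\epsilon)K_{\mathcal F_i}\cdot R_i<0$.

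If $K_{\mathcal F}$ is not pseudo-effective, you first need a uniform bound $\lambda_i\geq\tau\coloneqq\inf\{t\mid K_{\mathcal F}+tA\text{ is pseudo-effective}\}>0$, which your proposal does not establish. It holds because all earlier steps are $(K_{\mathcal F}+\lambda_iA)$-non-positive. Hence $K_{\mathcal F}+\lambda_iA$ pulls back to the pullback of the nef divisor $K_{\mathcal F_i}+\lambda_iA_i$ plus an effective exceptional divisor, so it is pseudo-effective. Then fix $0<\epsilon<\tau$ with $(X,\epsilon A)$ klt. Your own observation $(K_{X_i}+\lambda_iA_i)\cdot R_i=0$ gives $(K_{X_i}+\epsilon A_i)\cdot R_i=(\epsilon-\lambda_i)A_i\cdot R_i<0$.

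In both cases the foliated MMP is a sequence of steps of an ordinary MMP for a klt pair with big boundary, and klt is preserved at each step. It is, however, not an MMP with scaling for that pair. Termination must therefore come from a result that applies without a scaling structure (the paper invokes \cite[Theorem~1.1]{Hashizume26}), not from BCHM's termination with scaling.

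A smaller point concerns your reason that $\Sing(X,\mathcal F)$ has codimension at least three, which is wrong. Canonical foliations can be singular in codimension two. For example, the foliation defined by $d(xy)$ has an invariant exceptional divisor of discrepancy zero over $\{x=y=0\}$. The blow-up argument of Lemma \ref{lem_quasi_smooth_preserved} works there only because terminality is gained along $\Exc f^{-1}$. The correct reason is quasi-smoothness. On the smooth locus of $X$, $\Sing\mathcal F$ is the critical set of the local first integral, and each of its components lies in a fibre. Condition (3) then bounds its dimension by $\rk\mathcal F-2$, and terminality of $X$ handles $\Sing X$.
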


\begin{proof}
By Theorem \ref{t_contraction}, all the required divisorial
contractions and flips exist, and are also steps of a $K_X$-MMP.
By Lemma \ref{lem_quasi_smooth_preserved}, the hypotheses on $X$
and $\mathcal F$ are preserved by these steps.

We first suppose that $K_{\mathcal F}$ is big.
Choose an effective $\mathbb Q$-divisor
$D\sim_{\mathbb Q}K_{\mathcal F}$ and a sufficiently small rational
number $\epsilon>0$ such that $(X,\epsilon D)$ is klt.
Then $\epsilon D$ is big.

Run a $K_{\mathcal F}$-MMP, and denote its successive models by
$(X_i,\mathcal F_i)$ and the strict transform of $D$ by $D_i$.
Then $D_i\sim_{\mathbb Q}K_{\mathcal F_i}$. Proposition \ref{p_bott_vanishing} 
implies that 
if $R_i$ is the ray contracted at the $i$-th step, then 
$R_i$ is 
\(
(K_{X_i}+\epsilon D_i)\)-negative. 
Consequently, each step is also a step of the ordinary
$(K_X+\epsilon D)$-MMP.
Thus $(X_i,\epsilon D_i)$ remains klt, and
\cite[Theorem 1.1]{Hashizume26} implies termination.

Suppose now that $K_{\mathcal F}$ is not pseudo-effective.
Choose an effective ample $\mathbb Q$-divisor $A$ such that
$K_{\mathcal F}+A$ is nef, and run a $K_{\mathcal F}$-MMP with
scaling of $A$.
Let $\lambda_i$ be the scaling numbers and let $A_i$ be the
strict transform of $A$ on $X_i$.
Set
\[
\tau:=\inf\{t\geq0\mid
K_{\mathcal F}+tA\text{ is pseudo-effective}\}>0.
\]

We claim that $\lambda_i\geq\tau$ for every $i$.
Indeed, since the scaling numbers are non-increasing, every
earlier step is $(K_{\mathcal F}+\lambda_iA)$-nonpositive.
On a common resolution of $X\dashrightarrow X_i$, the pullback
of $K_{\mathcal F}+\lambda_iA$ is therefore the sum of the
pullback of the nef divisor
$K_{\mathcal F_i}+\lambda_iA_i$ and an effective exceptional
divisor. Hence $K_{\mathcal F}+\lambda_iA$ is pseudo-effective,
which proves the claim.

Choose a rational number $0<\epsilon<\tau$ sufficiently small so 
that $(X,\epsilon A)$ is klt.
For every ray $R_i$ selected by the scaling program, we have that $R_i$ is 
\(
(K_{X_i}+\epsilon A_i)\)-negative. Thus, each step of this MMP is also  a step of the ordinary
$(K_X+\epsilon A)$-MMP.
Since its initial boundary $\epsilon A$ is big,
\cite[Theorem 1.1]{Hashizume26} again implies termination.
\end{proof}

\begin{corollary}
\label{cor_main}
Let $X$ be a smooth projective variety and let $\mathcal F$ be a smooth foliation on $X$  of co-rank one.
Then we may run the $K_{\mathcal F}$-MMP. 

Moreover, if either $K_{\mathcal F}$ is big or $K_{\mathcal F}$ is not pseudo-effective,
then we may run a $K_{\mathcal F}$-MMP $\phi\colon X\dashrightarrow X'$ such that, writing  $\mathcal F' := \phi_*\mathcal F$, the following hold: 
\begin{enumerate}
\item if $K_{\mathcal F}$ is big, then $\mathcal F'$ is a minimal model, i.e. $K_{\mathcal F'}$ is nef; or
\item if $K_{\mathcal F}$ is not pseudo-effective, then $\mathcal F'$ admits a Mori fibre space, i.e. a contraction  
$\eta\colon X'\to Z$ onto a normal projective variety $Z$ such that $\dim Z<\dim X$, 
$\rho(X'/Z)=1$ and $-K_{\mathcal F'}$ is $\eta$-ample.  
\end{enumerate}

\end{corollary}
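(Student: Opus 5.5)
The plan is to reduce Corollary \ref{cor_main} to Theorem \ref{t_mmp_scaling}. It suffices to check that a smooth co-rank one foliation $\mathcal F$ on a smooth projective variety $X$ satisfies hypotheses (1)--(4) of that theorem, and then to read off the output of the terminating MMP in each of the two cases.

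\emph{Checking the hypotheses.} Since $X$ is smooth, it is $\mathbb Q$-factorial and terminal. Since $\Sing(X,\mathcal F)=\emptyset$, $\mathcal F$ is quasi-smooth by the first item of the Remark following the definition of quasi-smoothness. For canonicity, I will use Frobenius: near every point, $\mathcal F$ is induced by a smooth submersion $f\colon U\to Y$ onto a smooth curve. Then $R(f)=0$, and for every fibre the pair $(U,U_y)$ is log smooth, hence log canonical.

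The comparison of discrepancies in the proof of Lemma~\ref{l_lc_comparison} gives
\[
a(E,\mathcal F,0)=a(E,U,U_y)+1-\epsilon(E).
\]
Lemma~\ref{l_lc_comparison} itself only gives log canonicity, so I will sharpen it. For an exceptional divisor $E$ centred in the smooth fibre $U_y$, one has $a(E,U,U_y)\ge 0$ when $\operatorname{codim} c_E(E)\ge 2$, because $(U,U_y)$ is plt with reduced smooth boundary. If $c_E(E)\not\subset U_y$ for every $y$, I will choose $y$ to be the image of the generic point of the centre. Then $a(E,U,U_y)\ge \operatorname{codim}-1-1\ge 0$ in this case as well. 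This gives $a(E,\mathcal F)\ge 1-\epsilon(E)\ge 0$, so $\mathcal F$ is canonical. Alternatively, one can cite the standard fact that smooth foliations have canonical singularities.

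\emph{Running the MMP.} Theorem \ref{t_mmp_scaling} now lets us run the $K_{\mathcal F}$-MMP. When $K_{\mathcal F}$ is big or not pseudo-effective, the MMP can be chosen to terminate, giving $\phi\colon X\dashrightarrow X'$. Lemma \ref{lem_quasi_smooth_preserved} shows that $X'$ and $\mathcal F'$ again satisfy (1)--(4). Termination means that either $K_{\mathcal F'}$ is nef, or the cone theorem (Theorem \ref{t_cone}, applicable since quasi-smooth foliations admit local first integrals) yields a $K_{\mathcal F'}$-negative extremal ray $R$ whose contraction, given by Theorem \ref{t_contraction}, is not birational.

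In case (1), bigness is preserved under a birational contraction, since $\phi_*$ of a big divisor is big. So $K_{\mathcal F'}$ is big, and in particular pseudo-effective. If termination ended with a fibre-type contraction, then $K_{\mathcal F'}$ would be negative on a covering family of curves, contradicting pseudo-effectivity. Hence $K_{\mathcal F'}$ is nef.

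In case (2), $K_{\mathcal F'}$ remains non-pseudo-effective. Indeed, $\phi$ is $K_{\mathcal F}$-negative, so pulling $K_{\mathcal F'}$ back to a common resolution and adding an effective exceptional divisor recovers the pullback of $K_{\mathcal F}$. If $K_{\mathcal F'}$ were pseudo-effective, $K_{\mathcal F}$ would be too. Thus $K_{\mathcal F'}$ is not nef, and the terminal step is the contraction $\eta=c_R\colon X'\to Z$ with $\dim Z<\dim X$. By construction, $\rho(X'/Z)=1$ and $-K_{\mathcal F'}$ is $\eta$-ample. Here I am using Theorem \ref{t_contraction} with $B=0$: $(X',0)$ is log canonical and $\Sing(X',\mathcal F')$ has codimension at least three.

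\emph{Main obstacle.} The only non-formal point is verifying that smooth foliations are canonical. I expect the discrepancy computation above, or a reference to the standard fact, to handle it.
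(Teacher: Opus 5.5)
Your proposal is correct and takes the same route as the paper. The paper simply notes that a smooth foliation on a smooth variety is quasi-smooth and invokes Theorem \ref{t_mmp_scaling}; it leaves implicit the checks you spell out, namely that $X$ is terminal and $\mathbb Q$-factorial, that $\mathcal F$ is canonical, and why the final model is nef when $K_{\mathcal F}$ is big and a Mori fibre space when $K_{\mathcal F}$ is not pseudo-effective.

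One small tidy-up concerns the horizontal-centre case of your discrepancy computation. There is no closed point ``image of the generic point of the centre'' to choose, but you do not need one. In that case $E$ is non-invariant and $R(f\circ\mu)$ has no component along $E$, so $a(E,\mathcal F)=a(E,U)\ge 1$ directly.
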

\begin{proof}
Since $X$ and $\mathcal F$ are smooth, $\mathcal F$ is quasi-smooth.
We may apply Theorem \ref{t_mmp_scaling} to conclude.
\end{proof}

\bibliography{math.bib}
\bibliographystyle{alpha}

\end{document}